\newtheorem{theorem}{Theorem}[section]
\newtheorem{lemma}[theorem]{Lemma}
\newenvironment{definition}[1][Definition]{\begin{trivlist}
\item[\hskip \labelsep {\bfseries #1}]}{\end{trivlist}}
\title{Extremal Numbers for $2 \rightarrow 1$ Directed Hypergraphs with Two Edges Part II: The Degenerate Cases}
\author{Alex Cameron}
\begin{document}

\maketitle

\begin{abstract}
Let a $2 \rightarrow 1$ directed hypergraph be a 3-uniform hypergraph where every edge has two tail vertices and one head vertex. For any such directed hypergraph $F$ let the $n$th extremal number of $F$ be the maximum number of edges that any directed hypergraph on $n$ vertices can have without containing a copy of $F$. There are actually two versions the directed hypergraph model for this problem: the standard version where every triple of vertices is allowed to have up to all three possible directed edges and the oriented version where each triple can have at most one directed edge. In this paper, we determine the standard extremal numbers and the oriented extremal numbers for four different directed hypergraphs. Each has exactly two edges, and of the seven (nontrivial) ($2 \rightarrow 1$)-graphs with exactly two edges, these are the only four with extremal numbers that are quadratic in $n$. The standard and oriented extremal numbers for the other three directed hypergraphs with two edges are determined in a companion paper \cite{cameron2015}.
\end{abstract}

\section{Introduction}

The combinatorial structure treated in this paper is a $2 \rightarrow 1$ directed hypergraph defined as follows.

\begin{definition}
A \emph{$2 \rightarrow 1$ directed hypergraph} is a pair $H = (V,E)$ where $V$ is a finite set of \emph{vertices} and the set of \emph{edges} $E$ is some subset of the set of all pointed $3$-subsets of $V$. That is, each edge is three distinct elements of $V$ with one marked as special. This special vertex can be thought of as the \emph{head} vertex of the edge while the other two make up the \emph{tail set} of the edge. If $H$ is such that every $3$-subset of V contains at most one edge of $E$, then we call $H$ \emph{oriented}. For a given $H$ we will typically write its vertex and edge sets as $V(H)$ and $E(H)$. We will write an edge as $ab \rightarrow c$ when the underlying $3$-set is $\{a,b,c\}$ and the head vertex is $c$.
\end{definition}

For simplicity from this point on we will always refer to $2 \rightarrow 1$ directed hypergraphs as just \emph{graphs} or sometimes as \emph{$(2 \rightarrow 1)$-graphs} when needed to avoid confusion. This structure comes up as a particular instance of the model used to represent definite Horn formulas in the study of propositional logic and knowledge representation \cite{angluin1992, russell2002}. Some combinatorial properties of this model have been recently studied by Langlois, Mubayi, Sloan, and Gy. Tur\'{a}n in \cite{langlois2009} and \cite{langlois2010}. In particular, they looked at the extremal numbers for a couple of different small graphs. Before we can discuss their results we will need the following definitions.

\begin{definition}
Given two graphs $H$ and $G$, we call a function $\phi:V(H) \rightarrow V(G)$ a homomorphism if it preserves the edges of $H$: \[ab \rightarrow c \in E(H) \implies \phi(a)\phi(b) \rightarrow \phi(c) \in E(G).\] We will write $\phi:H \rightarrow G$ to indicate that $\phi$ is a homomorphism.
\end{definition}

\begin{definition}
Given a family $\mathcal{F}$ of graphs, we say that a graph $G$ is \emph{$\mathcal{F}$-free} if no injective homomorphism $\phi:F \rightarrow G$ exists for any $F \in \mathcal{F}$. If $\mathcal{F} = \{F\}$ we will write that $G$ is $F$-free.
\end{definition}

\begin{definition}
Given a family $\mathcal{F}$ of graphs, let the \emph{$n$th extremal number} $\text{ex}(n,\mathcal{F})$ denote the maximum number of edges that any $\mathcal{F}$-free graph on $n$ vertices can have. Similarly, let the \emph{$n$th oriented extremal number} $\text{ex}_o(n,\mathcal{F})$ be the maximum number of edges that any $\mathcal{F}$-free oriented graph on $n$ vertices can have. Sometimes we will call the extremal number the \emph{standard} extremal number or refer to the problem of determining the extremal number as the \emph{standard version} of the problem to distinguish these concepts from their oriented counterparts. As before, if $\mathcal{F} = \{F\}$, then we will write $\text{ex}(n,F)$ or $\text{ex}_o(n,F)$ for simplicity.
\end{definition}

These are often called Tur\'{a}n-type extremal problems after Paul Tur\'{a}n due to his important early results and conjectures concerning forbidden complete $r$-graphs \cite{turan1941, turan1954, turan1961}. Tur\'{a}n problems for uniform hypergraphs make up a large and well-known area of research in combinatorics, and the questions are often surprisingly difficult.

Extremal problems like this have also been considered for directed graphs and multigraphs (with bounded multiplicity) in \cite{brown1973} and \cite{brown1969} and for the more general directed multi-hypergraphs in \cite{brown1984}. In \cite{brown1969}, Brown and Harary determined the extremal numbers for several types of specific directed graphs. In \cite{brown1973}, Brown, Erd\H{o}s, and Simonovits determined the general structure of extremal sequences for every forbidden family of digraphs analogous to the Tur\'{a}n graphs for simple graphs.

The model of directed hypergraphs studied in \cite{brown1984} have $r$-uniform edges such that the vertices of each edge is given a linear ordering. However, there are many other ways that one could conceivably define a uniform directed hypergraph. The graph theoretic properties of a more general definition of a nonuniform directed hypergraph were studied by Gallo, Longo, Pallottino, and Nguyen in \cite{gallo1993}. There a directed hyperedge was defined to be some subset of vertices with a partition into head vertices and tail vertices.

Recently in \cite{cameron2016}, this author tried to capture many of these possible definitions for ``directed hypergraph" into one umbrella class of relational structures called generalized directed hypergraphs. The structures in this class include the uniform and simple versions of undirected hypergraphs, the totally directed hypergraphs studied in \cite{brown1984}, the directed hypergraphs studied in \cite{gallo1993}, and the $2 \rightarrow 1$ model studied here and in \cite{langlois2009,langlois2010}.

In \cite{langlois2009, langlois2010}, they study the extremal numbers for two different graphs with two edges each. They refer to these two graphs as the 4-resolvent and the 3-resolvent configurations after their relevance in propositional logic. Here, we will denote these graphs as $R_4$ and $R_3$ respectively and define them formally as \[V(R_4) = \{a,b,c,d,e\} \text{ and } E(R_4) = \{ab \rightarrow c, cd \rightarrow e\}\] and \[V(R_3) = \{a,b,c,d\} \text{ and } E(R_3) = \{ab \rightarrow c, bc \rightarrow d\}.\]

In \cite{langlois2010} the authors determined $\text{ex}(n,R_4)$ for sufficiently large $n$, and in \cite{langlois2009} they determined a sequence of numbers asymptotically equivalent to the sequence of numbers $\text{ex}(n,R_3)$ as $n$ increases to infinity. In these papers, the authors discuss a third graph with two edges which they call an Escher configuration because it calls to mind the Escher piece where two hands draw each other. This graph is on four vertices, $\{a,b,c,d\}$ and has edge set $\{ab \rightarrow c,cd \rightarrow b\}$. We will denote it by $E$. These three graphs along with the graph made up of two completely overlapping edges on the same 3-set actually turn out to be the only four nondegenerate graphs with exactly two edges. Their standard and oriented extremal numbers are shown in \cite{cameron2015}.

\begin{definition}
A graph $H$ is \emph{degenerate} if its vertices can be partitioned into three sets, $V(H) = T_1 \cup T_2 \cup K$ such that every edge of $E(H)$ is of the form $t_1t_2 \rightarrow k$ for some $t_1 \in T_1$, $t_2 \in T_2$, and $k \in K$.
\end{definition}

An immediate consequence of a result shown in \cite{cameron2016} is that the extremal numbers for a graph $H$ are cubic in $n$ if and only if $H$ is not degenerate.

In our model of directed hypergraph, there are nine different graphs with exactly two edges. Of these, five are degenerate. One of these is the graph with two independent edges, $V=\{a,b,c,d,e,f\}$ and $E=\{ab \rightarrow c, de \rightarrow f\}$. In this case the extremal numbers come directly from the known extremal number for two independent edges for undirected $3$-graphs. Therefore, the oriented extremal number is ${n-1 \choose 2}$ and the standard extremal number is $3{n-1 \choose 2}$.

We will call the other four degenerate graphs with two edges $I_0$, $I_1$, $H_1$, and $H_2$ and define them as
\begin{itemize}
\item $V(I_0) = \{a,b,c,d,x\} \text{ and } E(I_0) = \{ab \rightarrow x, cd \rightarrow x\}$
\item $V(I_1) = \{a,b,c,d\} \text{ and } E(I_1) = \{ab \rightarrow c, ad \rightarrow c\}$
\item $V(H_1) = \{a,b,c,d,x\} \text{ and } E(H_1) = \{ax \rightarrow b, cx \rightarrow d\}$
\item $V(H_2) = \{a,b,c,d\} \text{ and } E(H_2) = \{ab \rightarrow c, ab \rightarrow d\}$
\end{itemize}

Here, the subscripts indicate the number of tail vertices common to both edges. The $I$ graphs also share a head vertex while the $H$ graphs do not.

Some of the proofs that follow rely heavily on the concept of a link graph. For undirected $r$-graphs, the link graph of a vertex is the $(r-1)$-graph induced on the remaining vertices such that each $(r-1)$-set is an $(r-1)$-edge if and only if that set together with the specified vertex makes an $r$-edge in the original $r$-graph \cite{keevash2011}. In the directed hypergraph model here, there are a few ways we could define the link graph of a vertex. We will need the following two.

\begin{definition}
Let $x \in V(H)$ for some graph $H$. The \emph{tail link graph} of $x$ $T_x$ is the simple undirected 2-graph on the other $n-1$ vertices of $V(H)$ with edge set defined by all pairs of vertices that exist as tails pointing to $x$ in some edge of $H$. That is, $V(T_x) = V(H) \setminus \{x\}$ and \[ E(T_x) = \{yz : yz \rightarrow x \in H\}.\] The size of this set, $|T_x|$ will be called the \emph{tail degree} of $x$. The degree of a particular vertex $y$ in the tail link graph of $x$ will be denoted $d_x(y)$.

Similarly, let $D_x$ be the \emph{directed link graph} of $x$ on the remaining $n-1$ vertices of $V(H)$. That is, let $V(D_x) = V(H) \setminus \{x\}$ and \[E(D_x) = \{y \rightarrow z : xy \rightarrow z \in E(H)\}.\]
\end{definition}

The following notation will also be used when we want to count edges by tail sets.

\begin{definition}
For any pair of vertices $x,y \in V(H)$ for some graph $H$ let $t(x,y)$ denote the number of edges with tail set $\{x,y\}$. That is \[t(x,y) = |\{v : xy \rightarrow v \in E(H)\}|.\]
\end{definition}

\section{Forbidden $I_0$}

In this section $I_0$ denotes the forbidden graph where two edges intersect in exactly one vertex such that this vertex is a head for both edges. That is $V(I_0) = \{a,b,c,d,x\}$ and $E(I_0) = \{ab \rightarrow x, cd \rightarrow x\}$ (see Figure~\ref{A}). In this section we will prove the following result on the oriented extremal numbers of $I_0$.

\begin{figure}
	\centering
	\begin{tikzpicture}
		\filldraw[black] (0,0) circle (1pt);
		\filldraw[black] (4,0) circle (1pt);
		\filldraw[black] (2,1) circle (1pt);
		\filldraw[black] (0,2) circle (1pt);
		\filldraw[black] (4,2) circle (1pt);
		
		\draw[thick] (0,0) -- (0,2);
		\draw[thick] (4,0) -- (4,2);
		\draw[thick,->] (0,1) -- (2,1);
		\draw[thick,->] (4,1) -- (2,1);
		
		\node[left] at (0,2) {$a$};
		\node[left] at (0,0) {$b$};
		\node[above] at (2,1) {$x$};
		\node[right] at (4,2) {$c$};
		\node[right] at (4,0) {$d$};
	\end{tikzpicture}
	\caption{$I_0$}
	\label{A}
\end{figure}

\begin{theorem}
\label{exA}
For all $n \geq 9$, \[\text{ex}_o(n,I_0) = \begin{cases} n(n-3) + \frac{n}{3} & n \equiv 0 \text{ mod 3}\\
n(n-3) + \frac{n-4}{3} & n \equiv 1 \text{ mod 3} \\
n(n-3) + \frac{n-5}{3} & n \equiv 2 \text{ mod 3}
\end{cases} \] with exactly one extremal example up to isomorphism when $3|n$, exactly 18 non-isomorphic extremal constructions when $n \equiv 1 \text{ mod 3}$, and exactly 32 constructions when $n \equiv 2 \text{ mod 3}$.
\end{theorem}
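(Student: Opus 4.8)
The plan is to translate $I_0$-freeness into a local condition on tail link graphs, produce the extremal constructions, prove a matching upper bound by a deficiency count on an auxiliary ``centre'' map, and finally read off the list of extremal graphs from the equality case. For the reformulation: $G$ contains $I_0$ precisely when some vertex $v$ is the head of two edges with disjoint tail sets, i.e.\ when some $T_v$ has matching number at least $2$, so $G$ is $I_0$-free iff every $T_v$ has matching number at most $1$; by the standard description of intersecting families of $2$-sets, each such $T_v$ is then either contained in a triangle or is a star $K_{1,m}$ (plus isolated vertices). For $n\ge 9$ the triangle option is wasteful, so in an extremal graph almost every $T_v$ is a near-full star: write $c(v)$ for its centre and $L(v)\subseteq V\setminus\{v,c(v)\}$ for its leaf set, so the edges into $v$ are exactly $\{c(v)w\to v: w\in L(v)\}$ and $|T_v|=|L(v)|$. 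Set $f(v)=(n-2)-|T_v|\ge 0$; then the number of edges is $n(n-2)-\sum_v f(v)$ up to a bounded correction coming from any triangle-type links.

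\emph{Constructions.} When $3\mid n$, split $V$ into $n/3$ triples and on each triple $\{x,y,z\}$ let $c$ act as the $3$-cycle $x\mapsto y\mapsto z\mapsto x$; take every $T_v$ to be the full star at $c(v)$, and then on each triple $\{x,y,z\}$ delete two of the three edges $yz\to x$, $zx\to y$, $xy\to z$ (all three are present and violate orientedness, so two must go). A short check shows no other triple carries two edges, so this is an $I_0$-free oriented graph with $n(n-2)-2(n/3)=n(n-3)+n/3$ edges. For $n\equiv 1,2\pmod 3$ one uses $\lfloor n/3\rfloor-O(1)$ such $3$-cycles together with a small ``leftover gadget'' on the remaining $4$ or $5$ vertices, chosen to minimize the extra loss; classifying the optimal gadgets, together with how each attaches to the rest of the graph, is what will produce the numbers $18$ and $32$.

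\emph{Upper bound.} Fix an extremal $G$. Orientedness, read through $c$, says: for $a,b,c$ with $c(a)=b$, $c(b)=c$ (distinct) one cannot have both $c\in L(a)$ and $a\in L(b)$; and if $x_1,x_2$ share a centre $y$, one cannot have both $x_2\in L(x_1)$ and $x_1\in L(x_2)$. Iterating the first statement along the functional graph of $c$: a $k$-cycle of $c$ with $k\ge 4$ forces at least $k$ missing leaves, a $3$-cycle forces at least $2$, a $2$-cycle forces $\ge n-2$ missing leaves (so cannot occur), and the second statement forces $\binom{p}{2}$ missing leaves whenever $|c^{-1}(y)|=p$. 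Summing these and minimizing over the component structure of the functional graph of $c$ gives $\sum_v f(v)\ge \tfrac23 n$ when $3\mid n$, with the appropriate $+O(1)$ correction otherwise. I expect the main obstacle to be making the constant $\tfrac23$ sharp: charging each forced missing leaf to a single vertex only yields $\sum_v f(v)\ge n/2$, and one must use that inside a $3$-cycle a single forced missing leaf ``$a\notin L(b)$'' is simultaneously the obstruction coming from two different triples, so the three vertices' deficiencies are not independent. This requires a careful per-component accounting (or first reducing to the case where the functional graph of $c$ is a disjoint union of cycles and then analyzing cycles directly), and one also has to rule out that triangle-type links help.

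\emph{Uniqueness and the counts.} Running the inequalities at equality: when $3\mid n$ the functional graph of $c$ must be a disjoint union of $3$-cycles with exactly two of the three coincident edges removed on each triple, and the deletion pattern is then forced up to isomorphism, giving a unique extremal graph. When $n\equiv 1,2\pmod 3$, all but $4$ (resp.\ $5$) vertices must form such $3$-cycles and the rest must realize one of the optimal gadgets from the construction step; discarding isomorphic duplicates leaves exactly $18$ (resp.\ $32$) graphs. This gadget classification and isomorphism count is the most tedious part of the argument.
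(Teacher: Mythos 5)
Your outline follows the paper's own route: your centre map $c$ is exactly the paper's ``gatekeeper'' function, your functional graph of $c$ is the paper's gate digraph, and your deficiency count per component is the paper's bound on $\sum_{x\in V(C)}|T_x|$. But as written there are two genuine gaps, both of which you flag yourself. First, the sharp per-component bound is the heart of the upper bound and is not proved. You state the correct forced losses for pure cycles ($k$ for a $k$-cycle with $k\ge 4$, $2$ for a triangle, $\ge n-2$ for a $2$-cycle), but the theorem needs a bound valid for \emph{every} component of the functional graph -- cycles with arbitrary branches, acyclic components, and vertices whose link is a triangle or empty -- together with the equality characterization (only pure $C_k$ with $k\ge 4$, or a $C_3$ with a single pendant directed path, achieve average loss $1$ per vertex; only a bare $C_3$ achieves $2/3$). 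The paper gets this by charging one lost edge to every triple spanned by two gate edges, so the loss in a component on $m$ vertices is at least $\sum_x \binom{d(x)}{2}$ (plus a correction of $-1$ exactly when the component contains the triangle triple, which carries three gate edges), and then Jensen's inequality with $\sum_x d(x)=2m$ gives loss at least $m$ with equality iff the component is a cycle; the $C_2$ case (its triple offers only $n-2$ possible edges, not $2(n-2)$) and the acyclic case (the root heads at most $3$ edges, which is where $n\ge 8$ enters) need separate treatment. Without some version of this accounting, your claimed lower bounds on $\sum_v f(v)$ for the three residues, and especially the structural statement about extremal gates that uniqueness rests on, are assertions rather than proofs.

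Second, the statement includes the exact counts $1$, $18$, $32$, and your proposal only promises that a gadget classification ``will produce'' them. That classification is a real part of the proof: one must show each admissible gate actually yields extremal $I_0$-free oriented graphs (resolve each triple carrying several possible edges, and check orientedness and that all links stay stars), and then enumerate the resolutions up to isomorphism. Concretely, for $n\equiv 1\pmod 3$ the leftover component is either a $C_3$ with a pendant edge, where three independent mutually exclusive choices give $2\cdot 2\cdot 3=12$ nonisomorphic graphs, or a pure $C_4$, where distributing the four surplus edges among the four tail links as $2{+}2$, $2{+}1{+}1$, or $1{+}1{+}1{+}1$ gives $1+3+2=6$, totaling $18$; for $n\equiv 2\pmod 3$ the pure $C_5$ contributes $2+4+2=8$ and the $C_3$ with a pendant $2$-path contributes $2\cdot 2\cdot 3\cdot 2=24$, totaling $32$. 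Until the per-component bound with its equality analysis and this enumeration are carried out, the proposal is an accurate plan (matching the paper's) but not yet a proof.
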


The proof for this is rather long. However, in the standard version of the problem where each triple of vertices may hold up to three different directed edges, the problem is much simpler so we will begin there.

\begin{theorem}
For each $n \geq 5$, \[\text{ex}(n,I_0)=n(n-2)\] and for each $n \geq 6$, there are exactly $(n-1)^n$ different labeled $I_0$-free graphs that attain this maximum number of edges.
\end{theorem}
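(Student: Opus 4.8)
The plan is to establish the upper bound $\text{ex}(n,I_0) \le n(n-2)$ via a link-graph argument and then exhibit matching constructions, finally counting them. The key observation is that forbidding $I_0$ is a purely local condition on tail link graphs: a graph $G$ contains $I_0$ as a (injective) subgraph precisely when some vertex $x$ has two disjoint edges in its tail link graph $T_x$, i.e. precisely when some $T_x$ has a matching of size $2$. So $G$ is $I_0$-free if and only if for every vertex $x$, the undirected graph $T_x$ on the remaining $n-1$ vertices has no two independent edges. By the classical characterization (essentially the Erd\H{o}s--Gallai / Hilton--Milner base case), a graph on $m$ vertices with no two independent edges is either a triangle (plus isolated vertices) or a star (a set of edges through one common vertex). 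Hence $|T_x| \le \max\{3,\ n-2\} = n-2$ for $n \ge 5$. Summing over all $x$ counts each edge $ab \to x$ exactly once (charged to its head $x$), so $|E(G)| = \sum_{x} |T_x| \le n(n-2)$.

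Next I would produce the extremal constructions and show they are exactly the ones attaining equality. Equality forces every tail link graph $T_x$ to be a star on all $n-1$ remaining vertices (the triangle option gives only $3 < n-2$ edges once $n \ge 6$). So for each vertex $x$ there is a designated ``center'' $c(x) \in V(G)\setminus\{x\}$ such that the edges with head $x$ are exactly $\{\, c(x)\,y \to x : y \ne x, c(x)\,\}$. This is precisely the data of an arbitrary function $c : V(G) \to V(G)$ with $c(x) \ne x$, i.e. a choice of one of the $n-1$ other vertices for each of the $n$ vertices, giving $(n-1)^n$ labeled graphs. I would then check these are genuinely $I_0$-free (immediate, since each $T_x$ is a star, which has no $2$-matching) and that distinct functions $c$ give distinct edge sets (also immediate: $c(x)$ is recovered from $G$ as the unique common tail vertex among edges with head $x$, valid since $n - 1 \ge 3$ forces the star to have at least two edges and hence a well-defined center). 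That yields the count $(n-1)^n$ for $n \ge 6$; the small discrepancy in the stated range ($n \ge 5$ for the extremal number versus $n \ge 6$ for the count) is exactly the $n=5$ case, where $n-2 = 3$ coincides with the triangle bound and the center need not be unique.

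The main obstacle — really the only subtle point — is the clean translation of ``$I_0$-free'' into ``no $2$-matching in any tail link graph'' together with the uniqueness claim needed for the count. One has to be slightly careful that an injective homomorphism $\phi : I_0 \to G$ is the same thing as an honest copy: the images of $a,b,c,d$ must be four distinct vertices and the image of $x$ a fifth, and the two edges $\phi(a)\phi(b)\to\phi(x)$, $\phi(c)\phi(d)\to\phi(x)$ are then two edges of $G$ sharing only the head — equivalently two vertex-disjoint edges $\{\phi(a),\phi(b)\}$, $\{\phi(c),\phi(d)\}$ in $T_{\phi(x)}$. Conversely any two disjoint edges in some $T_x$ recover such a copy. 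For the enumeration, the point is that when $n \ge 6$ the extremal star in each $T_x$ has $n - 2 \ge 4 \ge 2$ edges, so its center is unique, making $c$ well-defined and the map (graph) $\mapsto$ (function $c$) a bijection onto $\{c : V(G)\to V(G) : c(x)\ne x\}$, which has size $(n-1)^n$. I would present the argument in this order: (1) the link-graph reformulation; (2) the no-$2$-matching structure lemma and the resulting bound $n(n-2)$; (3) analysis of equality and the bijection with functions; (4) the count $(n-1)^n$ for $n \ge 6$.
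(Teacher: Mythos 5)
Your proposal is correct and follows essentially the same route as the paper: the tail-link-graph reformulation (no two independent edges in any $T_x$, hence triangle or star, giving $|T_x|\le n-2$ and the bound $n(n-2)$), the construction of extremal graphs from a function $c:V\to V$ with $c(x)\neq x$, and the observation that for $n\ge 6$ equality forces every $T_x$ to be an $(n-2)$-star, yielding exactly $(n-1)^n$ labeled extremal graphs. Your explicit check that distinct functions give distinct graphs (recovering $c(x)$ as the unique star center) is a point the paper leaves implicit, but the argument is the same.
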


\begin{proof}
Let $H$ be $I_0$-free on $n \geq 5$ vertices. For any $x \in V(H)$, the tail link graph $T_x$ cannot contain two independent edges (see Figure~\ref{Aiff}). Therefore, the edge structure of $T_x$ is either a triangle or a star with $k$ edges all intersecting in a common vertex for some $0 \leq k \leq n-2$. So each vertex $x \in V(H)$ is at the head of at most $n-2$ edges. Hence, \[|E(H)| = \sum_{x \in V(H)} |E(T_x)| \leq n(n-2).\]

\begin{figure}
	\centering
	\begin{tikzpicture}
		\filldraw[color=black,fill=blue!5] (-4,1) circle [radius=2];
		
		\node at (-6,3) {$T_x$};
		\node [below] at (-5,0) {$b$};
		\node [below] at (-3,0) {$d$};
		\node [above] at (-5,2) {$a$};
		\node [above] at (-3,2) {$c$};
		
		\filldraw[black] (-5,0) circle (1pt);
		\filldraw[black] (-3,0) circle (1pt);
		\filldraw[black] (-5,2) circle (1pt);
		\filldraw[black] (-3,2) circle (1pt);
		
		\draw[thick] (-5,0) -- (-5,2);
		\draw[thick] (-3,0) -- (-3,2);
	
		\node at (-1,1) {$\iff$};
		
		\node at (0,3) {$H$};
		\node [below] at (0,0) {$b$};
		\node [below] at (3,0) {$d$};
		\node [above] at (0,2) {$a$};
		\node [above] at (3,2) {$c$};
		\node [above] at (1.5,1) {$x$};

		\filldraw[black] (0,0) circle (1pt);
		\filldraw[black] (3,0) circle (1pt);
		\filldraw[black] (1.5,1) circle (1pt);
		\filldraw[black] (0,2) circle (1pt);
		\filldraw[black] (3,2) circle (1pt);
		
		\draw[thick] (0,0) -- (0,2);
		\draw[thick] (3,0) -- (3,2);
		\draw[thick,->] (0,1) -- (1.5,1);
		\draw[thick,->] (3,1) -- (1.5,1);
	\end{tikzpicture}
	\caption{$ab, cd \in E(T_x)$ if and only if $ab \rightarrow x, cd \rightarrow x \in H$}
	\label{Aiff}
\end{figure}
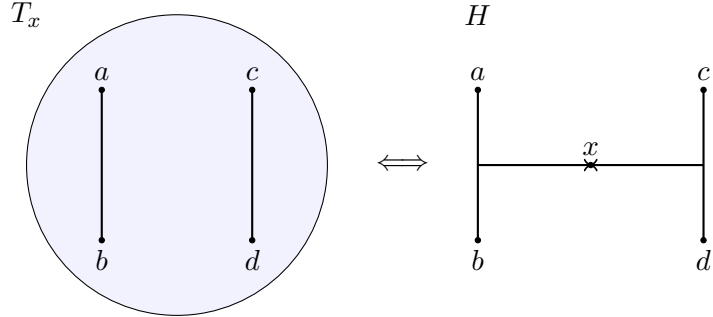

On the other hand, many different extremal constructions exist that give $n(n-2)$ edges on $n$ vertices without the forbidden intersection. Let \[f:[n] \rightarrow [n]\]be any function such that $f(x) \neq x$ for any $x \in [n]$. Define $H_f$ as the graph with vertex set $V(H_f) = [n]$ and edge set \[E(H_f) = \bigcup_{x \in [n]} \left\{f(x)y \rightarrow x : y \in [n] \setminus \{x,f(x)\}\right\}.\]Certainly each vertex $x$ is at the head of $n-2$ edges and each of its tails contain $f(x)$ which prevents the forbidden subgraph. So $|E(H_f)| = n(n-2)$, and $H_f$ is $I_0$-free for any such function $f$.

Moreover, there are $(n-1)^n$ different functions $f$ that will make such a construction on $[n]$. So this gives us $(n-1)^n$ labeledextremal $I_0$-free graphs. Conversely, since any $I_0$-free graph with the maximum number of edges must have $n-2$ edges in $T_x$ for each vertex $x$, then when $n \geq 6$ this implies that all tail link graphs must be $(n-2)$-stars. Therefore, these constructions give all of the extremal examples.
\end{proof}

The oriented version of this problem is less straight forward, but determining $\text{ex}_o(n,I_0)$ also begins with the observation that every tail link graph of an $I_0$-free graph will either be a triangle, a star, or empty. Broadly speaking, as $n$ gets large, it would make more sense for most, if not all, tail link graphs to be stars in order to fit as many edges into an $I_0$-free graph. This motivates the following auxiliary structure.

\subsection{Gates}

Let $H$ be some $I_0$-free graph. For each $x \in V(H)$ for which $T_x$ is a star (with at least one edge), let $g(x)$ denote the common vertex for the edges of $T_x$. We will refer to this vertex as the \emph{gatekeeper} of $x$ (in that it is the gatekeeper that any other vertex must pair with in order to ``access" $x$). In the case where $T_x$ contains only a single edge we may choose either of its vertices to serve as the gatekeeper. In this way, we have constructed a partial function, $g: V(H) \nrightarrow V(H)$.

Next, construct a directed $2$-graph $G$ on the vertex set $V(H)$ based on this partial function: \[y \rightarrow x \in E(G) \iff y = g(x).\] We'll call this digraph the \emph{gate} of $H$ (or more properly, $G$ is the gate of $H$ \emph{under} $g$ since $g$ isn't necessarily unique).

\begin{figure}
  \centering
  \begin{tikzpicture}
      \filldraw[color=black,fill=blue!5] (0,0) circle [radius=2];
      \node at (0,0) {$C_k$};
      
      \filldraw[black] (xyz polar cs:angle=30, radius=3) circle (1pt);
      \filldraw[black] (xyz polar cs:angle=40, radius=3) circle (1pt);
      \filldraw[black] (xyz polar cs:angle=25, radius=4) circle (1pt);
      \draw[thick,->] (xyz polar cs:angle=35, radius=2) -- (xyz polar cs:angle=30, radius=3);
      \draw[thick,->] (xyz polar cs:angle=35, radius=2) -- (xyz polar cs:angle=40, radius=3);
      \draw[thick,->] (xyz polar cs:angle=30, radius=3) -- (xyz polar cs:angle=25, radius=4);
      
      \filldraw[black] (xyz polar cs:angle=340, radius=3) circle (1pt);
      \filldraw[black] (xyz polar cs:angle=345, radius=4) circle (1pt);
      \draw[thick,->] (xyz polar cs:angle=340, radius=2) -- (xyz polar cs:angle=340, radius=3);
      \draw[thick,->] (xyz polar cs:angle=340, radius=3) -- (xyz polar cs:angle=345, radius=4);
      
      \filldraw[black] (xyz polar cs:angle=190, radius=3) circle (1pt);
      \filldraw[black] (xyz polar cs:angle=195, radius=4) circle (1pt);
      \filldraw[black] (xyz polar cs:angle=185, radius=4) circle (1pt);
      \filldraw[black] (xyz polar cs:angle=180, radius=5) circle (1pt);
      \filldraw[black] (xyz polar cs:angle=190, radius=5) circle (1pt);
      \draw[thick,->] (xyz polar cs:angle=190, radius=2) -- (xyz polar cs:angle=190, radius=3);
      \draw[thick,->] (xyz polar cs:angle=190, radius=3) -- (xyz polar cs:angle=195, radius=4);
      \draw[thick,->] (xyz polar cs:angle=190, radius=3) -- (xyz polar cs:angle=185, radius=4);
      \draw[thick,->] (xyz polar cs:angle=185, radius=4) -- (xyz polar cs:angle=190, radius=5);
      \draw[thick,->] (xyz polar cs:angle=185, radius=4) -- (xyz polar cs:angle=180, radius=5);
      
      \filldraw[black] (xyz polar cs:angle=95, radius=3) circle (1pt);
      \filldraw[black] (xyz polar cs:angle=105, radius=3) circle (1pt);
      \filldraw[black] (xyz polar cs:angle=115, radius=3) circle (1pt);
       \draw[thick,->] (xyz polar cs:angle=105, radius=2) -- (xyz polar cs:angle=115, radius=3);
        \draw[thick,->] (xyz polar cs:angle=105, radius=2) -- (xyz polar cs:angle=105, radius=3);
         \draw[thick,->] (xyz polar cs:angle=105, radius=2) -- (xyz polar cs:angle=95, radius=3);
  \end{tikzpicture}
  \caption{The structure of a connected component of the gate $G$}
  \label{Ckplus}
\end{figure}
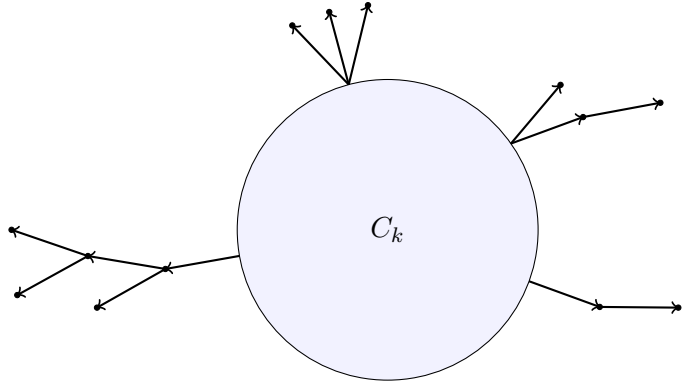

The edge structure of any gate $G$ is not difficult to determine. Since $g$ is a partial function, then each vertex has in-degree at most one in $G$. Therefore, the structure of any connected component of $G$ can be described as a directed cycle on $k$ vertices, $C_k$, for $1 \leq k$ (where $k=1$ implies a single vertex) unioned with $k$ disjoint directed trees, each with its root vertex on this cycle (see Figure~\ref{Ckplus}). We will refer to this kind of general structure as a \emph{$k$-cycle with branches}.

Let \[\mathcal{C} = \bigcup_{k=1}^{n} \mathcal{C}_k\] be the set of maximal connected components of a gate of $H$ where, for each $k$, $\mathcal{C}_k$ is the set of maximal connected components that are $k$-cycles with branches. Note that \[|E(H)| = \sum_{x \in V(H)} |T_x| = \sum_{C \in \mathcal{C}} \left( \sum_{x \in V(C)} |T_x| \right) = \sum_{k=1}^n \left(\sum_{C \in \mathcal{C}_k} \left( \sum_{x \in V(C)} |T_x| \right)\right).\] The next section determines for each $k$ an upper bound on \[\sum_{x \in V(C)} |T_x|\] as a function of the number of vertices, $|V(C)|$, for any $C \in \mathcal{C}_k$.

\subsection{Bounding $\sum_{x \in V(C)} |T_x|$ for any connected component $C$ of the gate}

Loosely speaking, each gatekeeper edge of a connected component $C$ represents at most $n-2$ edges of $H$. We will arrive at an upper bound on the sum $\sum_{x \in V(C)} |T_x|$ by adding this maximum for each edge of $C$, and then subtracting the number of triples of vertices that such a count has included more than once. This will happen for any triple of vertices which contain two or three gatekeeper edges. We make this observation formal in the following definition and lemma.

\begin{definition}
Let $G$ be some gate and let $C$ be a maximal connected component of $G$. Let $P(C)$ be the set of $2 \rightarrow 1$ \emph{possible edges} defined by \[P(C) = \bigcup_{a \rightarrow b \in E(C)} \left\{av \rightarrow b : v \in V(H) \setminus \{a,b\} \right\}.\]
\end{definition}

\begin{lemma}
\label{goofy}
Let $G$ be a gate, and let $C$ be a maximal connected component of $G$. If a set of three distinct vertices $\{x,y,z\} \subseteq V(C)$ are spanned by two gatekeeper edges of $G$, then $P(C)$ contains at least two edges on these three vertices.
\end{lemma}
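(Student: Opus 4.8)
The plan is a short case analysis driven by a single structural fact: since the partial function $g$ assigns to each vertex at most one gatekeeper, every vertex of the gate $G$ has in-degree at most one, and hence no two distinct edges of $G$ share a head. First I would use this to classify how two gatekeeper edges $e_1 \neq e_2$ can be supported inside the triple $\{x,y,z\}$. Writing $\{x,y,z\}$ as $\{u,v,w\}$ after relabeling, the only possibilities are: (a) a common-tail pair $u \rightarrow v$ and $u \rightarrow w$; (b) a directed path $u \rightarrow v$ and $v \rightarrow w$; or (c) a digon $u \rightarrow v$ and $v \rightarrow u$, with $w$ the remaining vertex of the triple. The configuration in which $e_1$ and $e_2$ share a head is excluded precisely by the in-degree bound, so these three cases are exhaustive. (If all three pairs on $\{x,y,z\}$ happen to be gatekeeper edges, they form a directed triangle and any two of them already fall under case (b), so no extra case is needed.)

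Next I would, in each case, name two explicit members of $P(C)$ with underlying $3$-set $\{x,y,z\}$. Recall that for each gatekeeper edge $a \rightarrow b$ of $C$ and each $s \in V(H) \setminus \{a,b\}$ we have $as \rightarrow b \in P(C)$; since $\{x,y,z\} \subseteq V(C) \subseteq V(H)$ and the three vertices are distinct, any vertex of the triple other than $a$ and $b$ is a legal choice of $s$. In case (a), the edge $u \rightarrow v$ with $s = w$ gives $uw \rightarrow v \in P(C)$, and $u \rightarrow w$ with $s = v$ gives $uv \rightarrow w \in P(C)$. In case (b), $u \rightarrow v$ with $s = w$ gives $uw \rightarrow v \in P(C)$, and $v \rightarrow w$ with $s = u$ gives $uv \rightarrow w \in P(C)$. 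In case (c), $u \rightarrow v$ with $s = w$ gives $uw \rightarrow v \in P(C)$, and $v \rightarrow u$ with $s = w$ gives $vw \rightarrow u \in P(C)$. In every case the two produced edges have different head vertices, hence are distinct edges on $\{x,y,z\}$, which is exactly what the lemma asserts.

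I do not anticipate a real obstacle; the argument is essentially bookkeeping. The two points needing mild care are (i) verifying that the ``common head'' configuration is genuinely the only one eliminated by the in-degree bound, so that (a)--(c) are complete, and (ii) checking that the chosen free vertex $s$ avoids both endpoints of the relevant gatekeeper edge, which is automatic since the three vertices of the triple are pairwise distinct. A slightly slicker uniform phrasing handles cases (a) and (b) at once: for each of the two gatekeeper edges $a_i \rightarrow b_i$ let $s_i$ be the vertex of $\{x,y,z\}$ that lies in neither $\{a_i,b_i\}$; then $a_i s_i \rightarrow b_i \in P(C)$ for $i = 1,2$, and these have heads $b_1 \neq b_2$. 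The digon case (c), where the two gatekeeper edges span only two vertices, would still be treated by the explicit choice above.
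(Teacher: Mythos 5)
Your proof is correct and takes essentially the same approach as the paper's: rule out the shared-head configuration via the in-degree bound coming from the partial function $g$, then in each remaining configuration (common tail or directed path) exhibit two edges of $P(C)$ on the triple with distinct heads. The only difference is your extra digon case, which the paper does not need because two such edges span only two vertices (that situation is handled separately in the counting lemma), but including it does no harm.
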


\begin{proof}
Without loss of generality, the two spanning edges on $\{x,y,z\}$ are either of the form \[x \rightarrow y \rightarrow z \text{ or } x \leftarrow y \rightarrow z.\] In the former case, $P(C)$ contains the edges $xz \rightarrow y$ and $yx \rightarrow z$. In the latter case, $P(C)$ contains the edges $yz \rightarrow x$ and $yx \rightarrow z$.
\end{proof}

Now comes the main counting lemma.

\begin{lemma}
\label{components}
Let $H$ be an $I_0$-free graph on $n \geq 8$ vertices. Let $G$ be a gate of $H$. Let $C$ be a maximal connected component of $G$ with $m$ vertices. Then
\begin{itemize}
\item $\sum_{x \in V(C)} |T_x| \leq m(n-3)$ if $C \in \mathcal{C}_k$ for any $k \neq 3$ with equality possible only if $C = C_k$ for some $k \geq 4$,
\item $\sum_{x \in V(C)} |T_x| \leq m(n-3) + 1$ if $C = C_3$, and
\item $\sum_{x \in V(C)} |T_x| \leq m(n-3)$ for all other $C \in \mathcal{C}_3$ with equality possible only if $C$ is a $3$-cycle with exactly one nonempty directed path coming off of it.
\end{itemize}
\end{lemma}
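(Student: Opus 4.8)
The plan is to rewrite $\sum_{x\in V(C)}|T_x|$ as a count of edges of $H$ lying among the possible edges of $C$, and then to cash in on the fact that $H$ is oriented. Because $H$ is $I_0$-free, every tail link graph is a star, a triangle, or empty; if $T_x$ is a nonempty star then all of its edges pass through $g(x)$, so the edges of $H$ with head $x$ are exactly those of the form $g(x)v\to x$, and each such edge belongs to $P(C)$ whenever $x\in V(C)$. In a $k$-cycle with branches every vertex has in-degree $1$ when $k\ge 2$, while when $k=1$ exactly one vertex $r$ (the root of the tree) has in-degree $0$, and that $r$ satisfies $|T_r|\le 3$ because $T_r$ is a triangle or empty. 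Writing $[P]$ for $1$ if $P$ holds and $0$ otherwise, this gives
\[\sum_{x\in V(C)}|T_x|\;=\;\bigl|E(H)\cap P(C)\bigr|+[\,k=1\,]\,|T_r|\;\le\;\bigl|E(H)\cap P(C)\bigr|+3\,[\,k=1\,].\]

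Next I would bound $\bigl|E(H)\cap P(C)\bigr|$ using orientedness: at most one edge of $H$ lies on any triple of vertices, so $\bigl|E(H)\cap P(C)\bigr|$ is at most the number of triples carrying an edge of $P(C)$. Distinct gatekeeper edges of $C$ contribute disjoint blocks of possible edges, so $|P(C)|=|E(C)|(n-2)$, and the number of triples carrying an edge of $P(C)$ equals $|P(C)|-\sum_\tau(\mu(\tau)-1)$, the sum over triples $\tau$ carrying at least one edge of $P(C)$ and $\mu(\tau)$ counting those edges. Lemma~\ref{goofy} says every triple spanned by two gatekeeper edges has $\mu(\tau)\ge2$, and one checks that such a triple has $\mu(\tau)=3$ precisely when it is a directed $3$-cycle of $C$; hence $\sum_\tau(\mu(\tau)-1)\ge \mathrm{ST}(C)+[\,C\in\mathcal C_3\,]$, where $\mathrm{ST}(C)$ denotes the number of triples of $V(C)$ spanned by at least two gatekeeper edges. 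Putting the pieces together, for any $C$ not containing a $2$-cycle,
\[\sum_{x\in V(C)}|T_x|\;\le\;|E(C)|(n-2)-\mathrm{ST}(C)-[\,C\in\mathcal C_3\,]+3\,[\,k=1\,].\]

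It then remains to read off $|E(C)|$ and a lower bound for $\mathrm{ST}(C)$ from the shape of $C$, using $\mathrm{ST}(C)=\sum_{v\in V(C)}\binom{\deg_C(v)}{2}-2[\,C\in\mathcal C_3\,]$ together with a convexity (degree-sum) estimate: when $k\ge2$ this yields $\mathrm{ST}(C)\ge |E(C)|+L(C)-2[\,C\in\mathcal C_3\,]$, where $L(C)$ is the number of leaves of $C$, and when $k=1$ it yields $\mathrm{ST}(C)\ge m-2$ (the minimum over $m$-vertex trees, attained by a path). If $C=C_k$ with $k\ge4$ then $|E(C)|=m$, $L(C)=0$, so $\mathrm{ST}(C)=m$ and the bound is exactly $m(n-3)$; adding any branch creates a leaf, raises $\mathrm{ST}(C)$, and makes the inequality strict, so equality forces $C=C_k$. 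If $k=1$ then $|E(C)|=m-1$ and $\mathrm{ST}(C)\ge m-2$, which with $n\ge8$ keeps the bound strictly below $m(n-3)$. If $C=C_3$ then $|E(C)|=3$, $\mathrm{ST}(C)=1$, and the $-[\,C\in\mathcal C_3\,]$ term gives exactly $m(n-3)+1$; if $C\in\mathcal C_3$ has $m\ge4$ then $L(C)\ge1$ forces $\mathrm{ST}(C)\ge m-1$, giving the bound $m(n-3)$, with equality only when $L(C)=1$ and every vertex has degree at most $3$, i.e.\ when the branches form a single directed path off one cycle vertex. The remaining case $k=2$ is handled in the same spirit but more easily, using that the edges with head $v_1$ and those with head $v_2$ all lie on the $n-2$ triples through $\{v_1,v_2\}$, so that pair of vertices alone contributes $n-2$ to $\sum_\tau(\mu(\tau)-1)$, forcing the bound strictly below $m(n-3)$ for $n\ge8$. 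Finally, to see the bounds are attained where claimed, one exhibits the extremal configurations: for $C=C_k$ with $k\ge4$, let each $T_{v_i}$ be the star centered at $v_{i-1}$ with the leaf $v_{i+1}$ removed (this resolves the one oriented conflict on each consecutive triple), and analogously for the $3$-cycle with a single pendant path.

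The main obstacle I anticipate is this last paragraph, in particular pinning down the exact constants and the ``equality possible only if'' statements, since the later enumeration of extremal constructions in Theorem~\ref{exA} depends on knowing precisely which components can be tight. The $k=2$ and $k=3$ cases deserve special care, because there the naive slogan ``each gatekeeper edge costs one possible edge'' is false — a $2$-cycle's two edges share an underlying pair, and a directed $3$-cycle carries three possible edges on a single triple — so the crude count must be replaced by the $\sum_\tau(\mu(\tau)-1)$ bookkeeping above.
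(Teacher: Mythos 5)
You follow essentially the paper's route: push every edge with head in $C$ into $P(C)$ (the in-degree-zero root, present only when $k=1$, contributing at most $3$), use orientedness to replace $|P(C)|=|E(C)|(n-2)$ by the number of covered triples, lower-bound the overcount by Lemma~\ref{goofy} via the adjacent-pair count $\sum_v\binom{\deg_C(v)}{2}$ (your $\mathrm{ST}(C)$ bookkeeping is the paper's Jensen step in slightly different clothing), and extract the equality cases from the degree sequence, where $\sum_v(\deg_C(v)-2)^2=2$ forces the $3$-cycle with a single pendant directed path. The $k=1$, $k\ge 4$ and $\mathcal{C}_3$ cases all check out, including the role of $n\ge 8$ in the acyclic case; your parenthetical claim that $\mu(\tau)=3$ occurs precisely on a directed $3$-cycle is false once $2$-cycles are allowed (a $2$-cycle on $\{x,z\}$ together with $z\to y$ also gives $\mu(\{x,y,z\})=3$), but you only use the inequality $\sum_\tau(\mu(\tau)-1)\ge \mathrm{ST}(C)+1$ and only for components without $2$-cycles, so this is harmless. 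The closing construction paragraph is not needed for the lemma, which asserts only upper bounds and necessary conditions for equality.

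The genuine gap is the $\mathcal{C}_2$ case, which is part of the first bullet and cannot be waved through. Your stated reason --- that the $n-2$ triples through the $2$-cycle pair contribute $n-2$ to $\sum_\tau(\mu(\tau)-1)$, ``forcing the bound strictly below $m(n-3)$'' --- only yields $\sum_{x\in V(C)}|T_x|\le (m-1)(n-2)$, and $(m-1)(n-2)<m(n-3)$ holds if and only if $m<n-2$. A $2$-cycle with branches can have $m\in\{n-2,n-1,n\}$ (for instance a spanning component), and for those your argument proves nothing, while the lemma still demands the bound $m(n-3)$ with equality excluded. The repair is exactly the bookkeeping you already set up but declined to apply when $k=2$: besides the $n-2$ doubled triples, every pair of distinct neighbors of a vertex still spans a triple carrying at least two edges of $P(C)$ by Lemma~\ref{goofy}, no triple gets overcharged (in-degree at most one rules out a third charge), and the underlying simple graph of $C$ is a tree on $m$ vertices, so by convexity $\sum_v\binom{d_G(v)}{2}\ge m\binom{2(m-1)/m}{2}=m-3+2/m$. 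This is what the paper does, giving $\sum_{x\in V(C)}|T_x|\le m(n-2)-(n-2)-(m-3+2/m)=m(n-3)+5-n-2/m<m(n-3)$ for all $n\ge 8$. With that case closed, your proof coincides with the paper's.
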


\begin{proof}
For convenience let \[S = \sum_{x \in V(C)} |T_x|.\] Note that for each $x \in V(C)$ with in-degree one, $ab \in T_x$ implies that $ab \rightarrow x \in P(C)$. Hence, if $C \not \in \mathcal{C}_1$, then every edge counted in the sum $S$ is in $P(C)$. Moreover, $|P(C)| = m(n-2)$.

If $C \in \mathcal{C}_k$ for $k \geq 4$, then by Lemma~\ref{goofy}, each intersection of gatekeeper edges of $C$ yields two edges on the same triple of vertices in $P(C)$. Conversely, since $C$ contains no $C_3$, then each distinct triple of vertices contains at most two gatekeeper edges. Therefore, each triple contains at most two edges of $P(C)$. Hence, \[S \leq m(n-2) - \sum_{x \in V(C)} {d_G(x) \choose 2}\] where $d_G(x)$ denotes the total number of vertices incident to $x$ in the gate.

Since $C$ has $m$ edges, then $\sum_{x \in V(C)} d_G(x) = 2m$. So \[S \leq m(n-2) - \sum_{x \in V(C)} {d_G(x) \choose 2} \leq m(n-3)\] by Jensen's Inequality. Moreover, equality happens if and only if $d_G(x) = d_G(y)$ for all $x,y \in V(C)$. Therefore, this inequality is strict for all $C \in \mathcal{C}_k$ unless $C=C_k$.

Similarly, if $C \in \mathcal{C}_2$, then $P(C)$ contains at least $\sum_{x \in V(C)} {d_G(x) \choose 2}$ multiedges for the same reason as before. But here there are an additional $n-2$ edges counted for each triple containing the $C_2$. Also, \[\sum_{x \in V(C)} d_G(x) = 2(m-1).\] Hence, \[S \leq m(n-2) - (n-2) - \sum_{x \in V(C)} {d_G(x) \choose 2} \leq (m-1)(n-2) - m {\frac{2(m-1)}{m} \choose 2}\] by Jensen's Inequality. This is strictly less than $m(n-3)$.

In the acyclic case, Lemma~\ref{goofy} implies that the sum of all $|T_x|$ for each $x \in V(C)$ other than the root vertex is less than or equal to \[(m-1)(n-2) - \sum_{x \in V(C)} {d_G(x) \choose 2}.\] The root vertex itself is the head vertex at most 3 edges in $H$ so Jensen's Inequality gives \[S \leq (m-1)(n-2) - m {\frac{2(m-1)}{m} \choose 2} + 3 < m(n-3)\] for all $n \geq 8$.

Finally, if $C \in \mathcal{C}_3$, then each intersection of gatekeeper edges of $C$ yields two edges on the same triple of vertices in $P(C)$. However, exactly one triple of vertices contains three gatekeeper edges and has three edges in $P(C)$. But the rest have at most two since there is only one triangle in $C$. Therefore, $\sum_{x \in V(C)} {d_G(x) \choose 2}$ counts each triple of vertices that contain more than one gatekeeper edge exactly once except for the triple that makes up the $C_3$ which it counts three times. Since we must subtract off 2 edges in $P(C)$ on these three vertices to eliminate repeated triples, then we must subtract $\sum_{x \in V(C)} {d_G(x) \choose 2}-1$ from $|P(C)|$. Therefore, \[S \leq m(n-2) - \sum_{x \in V(C)} {d_G(x) \choose 2} + 1.\]

So by Jensen's Inequality, \[S \leq m(n-3) + 1\] with equality possible only if all of the degrees $d_G(x)$ are equal. This can only happen if $C = C_3$.

If we want to see for which $C \in \mathcal{C}_3$ the second best bound of $m(n-3)$ could be attained, then we need to set \[\sum_{x \in V(C)} {d_G(x) \choose 2} = m+1.\] Assume that the vertices are $x_1,\ldots,x_m$, and for each $x_i$ let \[d_i = d_G(x_i)-2.\] Then $\sum_{i=1}^m d_i = 0$ and a quick calculation shows that $\sum_{i=1}^m d_i^2 = 2$. Therefore, the only possibility is for some $d_i=1$ and another to equal $-1$ and all the rest must be 0. This corresponds with one vertex degree being 3, another being 1, and all others being 2. The only way that this can happen in a $C_3$ with branches is to have exactly one branch, and that branch must be a directed path.
\end{proof}

This shows that the best we can hope for in terms of the average number of edges per vertex over any connected component of the gate is $n-3 + \frac{1}{3}$, and this could be attained only in the case where the component is a directed triangle with no branches. Otherwise, the average number of edges of a component is at most $n-3$, and this is attainable only if the component is a directed triangle with a single directed path coming off of one of its vertices or a directed $k$-cycle with no branches for some $k \geq 4$.

This is enough for us to establish the upper bound for $\text{ex}_o(n,I_0)$ and to characterize the necessary structure of the gate for any graph attaining this upper bound.

\subsection{Upper Bound on $\text{ex}_o(n,I_0)$}

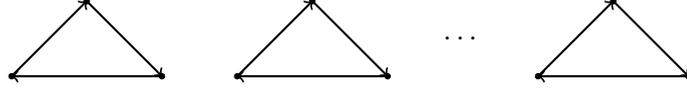
\begin{figure}
  \centering
      \begin{tikzpicture}
      	\filldraw[black] (0,0) circle (1pt);
	\filldraw[black] (1,1) circle (1pt);
	\filldraw[black] (2,0) circle (1pt);
	\draw[thick,->] (0,0) -- (1,1);
	\draw[thick,->] (1,1) -- (2,0);
	\draw[thick,->] (2,0) -- (0,0);
	
	\filldraw[black] (3,0) circle (1pt);
	\filldraw[black] (4,1) circle (1pt);
	\filldraw[black] (5,0) circle (1pt);
	\draw[thick,->] (3,0) -- (4,1);
	\draw[thick,->] (4,1) -- (5,0);
	\draw[thick,->] (5,0) -- (3,0);
	
	\filldraw[black] (7,0) circle (1pt);
	\filldraw[black] (8,1) circle (1pt);
	\filldraw[black] (9,0) circle (1pt);
	\draw[thick,->] (7,0) -- (8,1);
	\draw[thick,->] (8,1) -- (9,0);
	\draw[thick,->] (9,0) -- (7,0);
	
	\node at (6,0.5) {$\cdots$};
      \end{tikzpicture}
  \caption{Structure of the gate for an extremal $I_0$-free graph when $n \equiv 0 \text{ mod } 3$.}
\end{figure}

Let $H$ be an $I_0$-free graph on $n \geq 9$ vertices. Let $G$ be a gate of $H$. Let $\mathcal{C}$ be the set of maximal connected components of $G$ and break $\mathcal{C}$ into three disjoint subsets based on the maximum average number of edges attainable for the components in each. That is, let \[\mathcal{C} = \mathcal{D}_1 \cup \mathcal{D}_2 \cup \mathcal{D}_3\] where $\mathcal{D}_1$ are all components with maximum average number of edges per vertex strictly less than $n-3$: those components that are either acyclic, contain a $C_2$, contain a $C_3$ with nonempty branches that are more than just a single path, or contain a $C_k$ for $k \geq 4$ with some nonempty branch; $\mathcal{D}_2$ is the set of all components with maximum number of edges per vertex of $n-3$: those that contain a directed $C_3$ and exactly one directed path or those that are a directed $k$-cycles for any $k \geq 4$ and no branches; and $\mathcal{D}_3$ is the set of components with a maximum average greater than $n-3$: the directed triangles.

For each $i$ let $d_i$ be the total number of vertices contained in the components of $\mathcal{D}_i$. Then \[|E(H)| \leq d_3 \left(n-3 + \frac{1}{3} \right) + (n-d_3)(n-3)\] with equality possible only if $d_1=0$. Then this is enough to prove the following.

\begin{lemma}
\label{mod0}
Let $H$ be an $I_0$-free graph on $n \geq 9$ vertices such that $n \equiv 0 \text{ mod } 3$, then \[|E(H)| \leq n(n-3) + \frac{n}{3}.\] Moreover, the only way for $H$ to attain this maximum number of edges is if the gate of $H$ is a disjoint union of directed triangles.
\end{lemma}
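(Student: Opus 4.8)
The plan is to use the decomposition $\mathcal{C} = \mathcal{D}_1 \cup \mathcal{D}_2 \cup \mathcal{D}_3$ together with the per-component bounds from Lemma~\ref{components}, and then optimize the resulting linear expression over the possible vertex counts $d_1, d_2, d_3$ subject to $d_1 + d_2 + d_3 = n$. First I would record the inequality already displayed in the excerpt, namely
\[
|E(H)| \leq d_3\!\left(n - 3 + \tfrac{1}{3}\right) + (n - d_3)(n-3) = n(n-3) + \tfrac{d_3}{3},
\]
with equality forcing $d_1 = 0$. So maximizing $|E(H)|$ amounts to maximizing $d_3$, the number of vertices lying in components of $\mathcal{D}_3$, i.e.\ in directed triangles. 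Since every component of $\mathcal{D}_3$ is a $C_3$ and hence uses exactly $3$ vertices, $d_3$ is a multiple of $3$, and $d_3 \leq n$. When $n \equiv 0 \bmod 3$ the value $d_3 = n$ is achievable in principle, giving the claimed bound $n(n-3) + n/3$.

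Next I would argue the structural (uniqueness) part: to attain equality we need $d_3 = n$, which means $d_1 = d_2 = 0$ and every vertex lies in a directed triangle of the gate — that is, the gate is a disjoint union of directed triangles. Here I must be careful that the bound in Lemma~\ref{components} for a $C_3$ component, $\sum_{x \in V(C)}|T_x| \leq m(n-3)+1 = 3(n-3)+1$, is actually simultaneously attainable for all $n/3$ triangles by a genuine $I_0$-free graph; this is where I would either point back to an explicit construction (a gate that is a union of directed triangles, with each vertex $x$ taking its gatekeeper $g(x)$ as the mandatory tail partner and pointing to all $n-2$ other vertices except that on the three vertices of each triangle one carefully keeps only the extra edge that pushes $3(n-3)$ up to $3(n-3)+1$) or defer to the matching lower-bound construction that the paper presumably gives just after this lemma. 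The key point for the converse is that $\mathcal{D}_1$ being empty is forced by the equality condition in the displayed inequality, and $\mathcal{D}_2$ being empty is forced because any vertex in a $\mathcal{D}_2$ component contributes on average only $n-3$ rather than $n-3+\tfrac13$, so replacing such components by triangles strictly increases the count — more precisely, $d_3 < n$ would give $|E(H)| \le n(n-3) + (d_3)/3 < n(n-3) + n/3$.

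The main obstacle I anticipate is not the arithmetic — the optimization is a one-variable affine maximization — but rather making the equality analysis airtight. Specifically, Lemma~\ref{components} only says the bound $3(n-3)+1$ for a single $C_3$ is attainable "only if $C = C_3$"; it does not by itself guarantee that all $n/3$ triangles can hit their individual maxima at once inside one $I_0$-free graph, nor does it rule out some subtle global obstruction. So the real content of the equality case is exhibiting (or citing) one graph $H$ whose gate is a union of $n/3$ directed triangles and which achieves $\sum_x |T_x| = n(n-3) + n/3$; once that construction is in hand, the converse direction is immediate from the chain of strict inequalities above. I would therefore structure the proof as: (1) derive the upper bound from the displayed inequality plus $d_3 \le n$ and $3 \mid d_3$; (2) observe equality forces $d_1 = 0$, $d_2 = 0$, $d_3 = n$, hence the gate is a disjoint union of directed triangles; (3) verify such a configuration is realizable, completing both halves.
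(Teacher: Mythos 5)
Your proposal is correct and follows essentially the same route as the paper: the displayed inequality $|E(H)| \leq d_3\left(n-3+\tfrac{1}{3}\right) + (n-d_3)(n-3) = n(n-3)+\tfrac{d_3}{3}$ together with $d_3 \leq n$ gives the bound, and equality forces $d_3 = n$ (hence $d_1 = d_2 = 0$), so the gate is a disjoint union of directed triangles. Your worry in step (3) about simultaneous attainability is not actually needed for this lemma, which only asserts the upper bound and the necessity of the triangle gate; realizability is deferred to the explicit lower-bound construction lemma that follows in the paper.
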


The next two lemmas give the maximum number when $n \equiv 1,2 \text{ mod } 3$. There is only slightly more to consider in these cases.

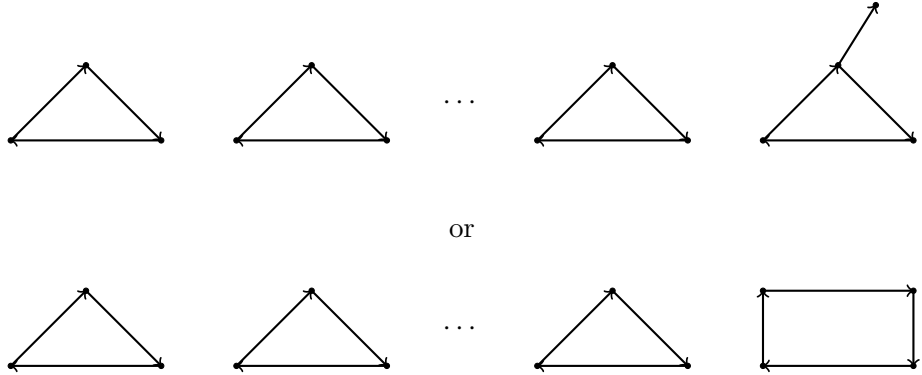
\begin{figure}
  \centering
      \begin{tikzpicture}
      	\filldraw[black] (0,0) circle (1pt);
	\filldraw[black] (1,1) circle (1pt);
	\filldraw[black] (2,0) circle (1pt);
	\draw[thick,->] (0,0) -- (1,1);
	\draw[thick,->] (1,1) -- (2,0);
	\draw[thick,->] (2,0) -- (0,0);
	
	\filldraw[black] (3,0) circle (1pt);
	\filldraw[black] (4,1) circle (1pt);
	\filldraw[black] (5,0) circle (1pt);
	\draw[thick,->] (3,0) -- (4,1);
	\draw[thick,->] (4,1) -- (5,0);
	\draw[thick,->] (5,0) -- (3,0);
	
	\filldraw[black] (7,0) circle (1pt);
	\filldraw[black] (8,1) circle (1pt);
	\filldraw[black] (9,0) circle (1pt);
	\draw[thick,->] (7,0) -- (8,1);
	\draw[thick,->] (8,1) -- (9,0);
	\draw[thick,->] (9,0) -- (7,0);
	
	\filldraw[black] (10,0) circle (1pt);
	\filldraw[black] (11,1) circle (1pt);
	\filldraw[black] (12,0) circle (1pt);
	\filldraw[black] (11.5,1.8) circle (1pt);
	\draw[thick,->] (10,0) -- (11,1);
	\draw[thick,->] (11,1) -- (12,0);
	\draw[thick,->] (12,0) -- (10,0);
	\draw[thick,->] (11,1) -- (11.5,1.8);
	
	\node at (6,0.5) {$\cdots$};
	
	\filldraw[black] (0,-3) circle (1pt);
	\filldraw[black] (1,-2) circle (1pt);
	\filldraw[black] (2,-3) circle (1pt);
	\draw[thick,->] (0,-3) -- (1,-2);
	\draw[thick,->] (1,-2) -- (2,-3);
	\draw[thick,->] (2,-3) -- (0,-3);
	
	\filldraw[black] (3,-3) circle (1pt);
	\filldraw[black] (4,-2) circle (1pt);
	\filldraw[black] (5,-3) circle (1pt);
	\draw[thick,->] (3,-3) -- (4,-2);
	\draw[thick,->] (4,-2) -- (5,-3);
	\draw[thick,->] (5,-3) -- (3,-3);
	
	\filldraw[black] (7,-3) circle (1pt);
	\filldraw[black] (8,-2) circle (1pt);
	\filldraw[black] (9,-3) circle (1pt);
	\draw[thick,->] (7,-3) -- (8,-2);
	\draw[thick,->] (8,-2) -- (9,-3);
	\draw[thick,->] (9,-3) -- (7,-3);
	
	\filldraw[black] (10,-3) circle (1pt);
	\filldraw[black] (10,-2) circle (1pt);
	\filldraw[black] (12,-2) circle (1pt);
	\filldraw[black] (12,-3) circle (1pt);
	\draw[thick,->] (10,-3) -- (10,-2);
	\draw[thick,->] (10,-2) -- (12,-2);
	\draw[thick,->] (12,-2) -- (12,-3);
	\draw[thick,->] (12,-3) -- (10,-3);
	
	\node at (6,-1.2) {or};
	
	\node at (6,-2.5) {$\cdots$};
      \end{tikzpicture}
  \caption{The only possible structures of the gate of an extremal $I_0$-free graph when $n \equiv 1 \text{ mod } 3$.}
\end{figure}

\begin{lemma}
\label{mod1}
Let $H$ be an $I_0$-free graph on $n \geq 9$ vertices such that $n \equiv 1 \text{ mod } 3$, then \[|E(H)| \leq n(n-3) + \frac{n-4}{3}.\] Moreover, the only way for $H$ to attain this maximum number of edges is if the gate of $H$ is a disjoint union of $\frac{n-4}{3}$ directed triangles together with either a directed $C_4$ or a 3-cycle with an extra edge.
\end{lemma}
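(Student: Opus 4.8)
The plan is to mirror the argument used for Lemma~\ref{mod0}, extracting the extra slack that is available only when $n \not\equiv 0 \bmod 3$. Recall the master inequality established just before Lemma~\ref{mod0}: if $d_3$ denotes the total number of vertices lying in components of $\mathcal{D}_3$ (the directed triangles), then
\[
|E(H)| \leq d_3\left(n-3+\tfrac13\right) + (n-d_3)(n-3) = n(n-3) + \tfrac{d_3}{3},
\]
with equality forcing $d_1 = 0$. Since each component of $\mathcal{D}_3$ has exactly $3$ vertices, $d_3$ is a multiple of $3$ with $d_3 \leq n$; when $n \equiv 1 \bmod 3$ this gives $d_3 \leq n-1$, but $n-1 \not\equiv 0 \bmod 3$ either, so in fact $d_3 \leq n-4$. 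Plugging this in yields $|E(H)| \leq n(n-3) + \tfrac{n-4}{3}$, which is the stated bound. This is the easy half.

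The substantive half is the equality characterization. Suppose $|E(H)| = n(n-3) + \tfrac{n-4}{3}$. Then we need both $d_3 = n-4$ and $d_1 = 0$, and moreover the remaining $n - d_3 = 4$ vertices — call their components $\mathcal{D}_2$-components — must actually contribute the full $4(n-3)$ edges, i.e. every $\mathcal{D}_2$-component must attain its per-vertex maximum of exactly $n-3$ with equality throughout Lemma~\ref{components}. First I would argue that $\mathcal{D}_1 = \emptyset$ forces these $4$ leftover vertices to lie in components that are each either a $C_k$ ($k\geq 4$) with no branches or a $C_3$ with one directed path, since all other component types fall into $\mathcal{D}_1$. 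With only $4$ vertices to distribute, the possibilities are: a single $C_4$; or a single $C_3$-with-one-path-of-length-one (the "3-cycle with an extra edge"); a partition into a $C_3$-with-path plus one isolated vertex is impossible because an isolated vertex is its own $\mathcal{C}_1$ component, which lies in $\mathcal{D}_1$ — contradiction. One must also rule out e.g. two components splitting the $4$ vertices as $3+1$ or as two paths, by the same "no $\mathcal{D}_1$-component" argument, since a lone $C_3$ (no branch) is in $\mathcal{D}_2$ only as part of $\mathcal{D}_3$ actually — here care is needed: a bare $C_3$ with no branches is a $\mathcal{D}_3$ component, not $\mathcal{D}_2$, so a $3+1$ split would put the $C_3$ in $\mathcal{D}_3$ and increase $d_3$ to $n-1$, contradicting $d_3 \equiv 0 \bmod 3$. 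Hence the leftover four vertices form exactly one component, and it is either $C_4$ or the $3$-cycle-plus-edge.

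The main obstacle I anticipate is not the counting but verifying that equality in the component bound genuinely propagates: one must check that when a $\mathcal{D}_2$-component attains $\sum_{x \in V(C)}|T_x| = m(n-3)$, every inequality used to derive it in the proof of Lemma~\ref{components} is tight — in particular that for each $x \in V(C)$ the tail link $T_x$ is a \emph{full} star on $g(x)$ (so $|T_x| = n-2$ whenever $x$ has in-degree one in the gate) or the appropriate reduced value, and that no edge of $H$ lies outside $P(C)$. This bookkeeping, together with the observation that the directed-triangle components must likewise each be "full," is what pins down the construction up to isomorphism; the exact count of $18$ non-isomorphic extremal graphs (which is asserted only in Theorem~\ref{exA}, not in this lemma) would come afterward from counting the ways to orient and attach the full stars consistently, but for Lemma~\ref{mod1} it suffices to identify the gate structure. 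I would close by noting the two gate shapes are genuinely different (one has a vertex of out-degree $2$, the other does not), so neither can be dropped.
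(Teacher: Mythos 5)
There is a genuine gap in your "easy half," and it propagates into the equality characterization. You write that since $n\equiv 1 \bmod 3$ we have $d_3\le n-1$, ``but $n-1\not\equiv 0 \bmod 3$ either, so in fact $d_3\le n-4$.'' This is false: when $n\equiv 1 \bmod 3$ we have $n-1\equiv 0 \bmod 3$, so $d_3=n-1$ is perfectly consistent with $d_3$ being a multiple of $3$ (the gate could be $\tfrac{n-1}{3}$ disjoint directed triangles plus one leftover vertex forming its own singleton component). In that case the master inequality only gives $|E(H)|\le n(n-3)+\tfrac{n-1}{3}$, which is \emph{larger} than the claimed bound by $1$, so neither the upper bound nor the statement ``equality forces $d_3=n-4$'' follows from what you have written. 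This is exactly the case the paper's proof spends half its length on: when $d_3=n-1$, the leftover vertex is an isolated component of the gate, hence has no gatekeeper, so its tail link graph is empty or a triangle and it contributes at most $3$ edges; this yields $|E(H)|\le (n-1)\left(n-3+\tfrac13\right)+3$, which is below $n(n-3)+\tfrac{n-4}{3}$ by $n-7>0$ for $n\ge 9$. Inserting this case disposes of $d_3=n-1$ and restores both halves of your argument.

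The rest of your proposal matches the paper's route: once $d_3\le n-4$ is in hand, equality forces $d_3=n-4$, $d_1=0$, and the four remaining vertices to lie in $\mathcal{D}_2$-components, and your enumeration (a single $C_4$, or a $C_3$ with one pendant edge; a bare $C_3$ plus a singleton is excluded because the bare $C_3$ would be a $\mathcal{D}_3$-component and the singleton a $\mathcal{D}_1$-component) is correct, if somewhat tangled in the middle. Your closing worry about re-verifying tightness inside Lemma~\ref{components} is not needed for this lemma as stated, since the lemma only asserts the bound and the gate structure, which is what the paper's short proof establishes.
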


\begin{proof}
Since $n \equiv 1 \text{ mod } 3$, then $d_3 \leq n-1$. If $d_3 = n-1$, then the gate consists of $\frac{n-1}{3}$ disjoint directed triangles and one isolated vertex which means that \[|E(H)| \leq (n-1) \left(n-3+\frac{1}{3} \right) + 3.\] If $d_3 \leq n-4$, then we can do better with \[|E(H)| \leq (n-4) \left( n-3+\frac{1}{3} \right) + 4(n-3)\] only in the case of $\frac{n-4}{3}$ disjoint directed triangles and one component from $\mathcal{D}_2$ in the gate. Therefore, \[|E(H)| \leq n(n-3) + \frac{n-4}{3}.\]
\end{proof}

\begin{figure}
  \centering
     \begin{tikzpicture}
      	\filldraw[black] (0,0) circle (1pt);
	\filldraw[black] (1,1) circle (1pt);
	\filldraw[black] (2,0) circle (1pt);
	\draw[thick,->] (0,0) -- (1,1);
	\draw[thick,->] (1,1) -- (2,0);
	\draw[thick,->] (2,0) -- (0,0);
	
	\filldraw[black] (3,0) circle (1pt);
	\filldraw[black] (4,1) circle (1pt);
	\filldraw[black] (5,0) circle (1pt);
	\draw[thick,->] (3,0) -- (4,1);
	\draw[thick,->] (4,1) -- (5,0);
	\draw[thick,->] (5,0) -- (3,0);
	
	\filldraw[black] (7,0) circle (1pt);
	\filldraw[black] (8,1) circle (1pt);
	\filldraw[black] (9,0) circle (1pt);
	\draw[thick,->] (7,0) -- (8,1);
	\draw[thick,->] (8,1) -- (9,0);
	\draw[thick,->] (9,0) -- (7,0);
	
	\filldraw[black] (10,0) circle (1pt);
	\filldraw[black] (11,1) circle (1pt);
	\filldraw[black] (12,0) circle (1pt);
	\filldraw[black] (11.5,1.8) circle (1pt);
	\filldraw[black] (12,2.6) circle (1pt);
	\draw[thick,->] (10,0) -- (11,1);
	\draw[thick,->] (11,1) -- (12,0);
	\draw[thick,->] (12,0) -- (10,0);
	\draw[thick,->] (11,1) -- (11.5,1.8);
	\draw[thick,->] (11.5,1.8) -- (12,2.6);
	
	\node at (6,0.5) {$\cdots$};
	
	\filldraw[black] (0,-3) circle (1pt);
	\filldraw[black] (1,-2) circle (1pt);
	\filldraw[black] (2,-3) circle (1pt);
	\draw[thick,->] (0,-3) -- (1,-2);
	\draw[thick,->] (1,-2) -- (2,-3);
	\draw[thick,->] (2,-3) -- (0,-3);
	
	\filldraw[black] (3,-3) circle (1pt);
	\filldraw[black] (4,-2) circle (1pt);
	\filldraw[black] (5,-3) circle (1pt);
	\draw[thick,->] (3,-3) -- (4,-2);
	\draw[thick,->] (4,-2) -- (5,-3);
	\draw[thick,->] (5,-3) -- (3,-3);
	
	\filldraw[black] (7,-3) circle (1pt);
	\filldraw[black] (8,-2) circle (1pt);
	\filldraw[black] (9,-3) circle (1pt);
	\draw[thick,->] (7,-3) -- (8,-2);
	\draw[thick,->] (8,-2) -- (9,-3);
	\draw[thick,->] (9,-3) -- (7,-3);
	
	\filldraw[black] (10,-3) circle (1pt);
	\filldraw[black] (10,-2) circle (1pt);
	\filldraw[black] (12,-2) circle (1pt);
	\filldraw[black] (12,-3) circle (1pt);
	\filldraw[black] (11,-1.5) circle (1pt);
	\draw[thick,->] (10,-3) -- (10,-2);
	\draw[thick,->] (10,-2) -- (11,-1.5);
	\draw[thick,->] (11,-1.5) -- (12,-2);
	\draw[thick,->] (12,-2) -- (12,-3);
	\draw[thick,->] (12,-3) -- (10,-3);
	
	\node at (6,-2.5) {$\cdots$};
	
	\node at (6,-1.2) {or};
      \end{tikzpicture}
  \caption{The only possible structures of the gate of an extremal $I_0$-free graph when $n \equiv 2 \text{ mod } 3$.}
  \label{poss2}
\end{figure}
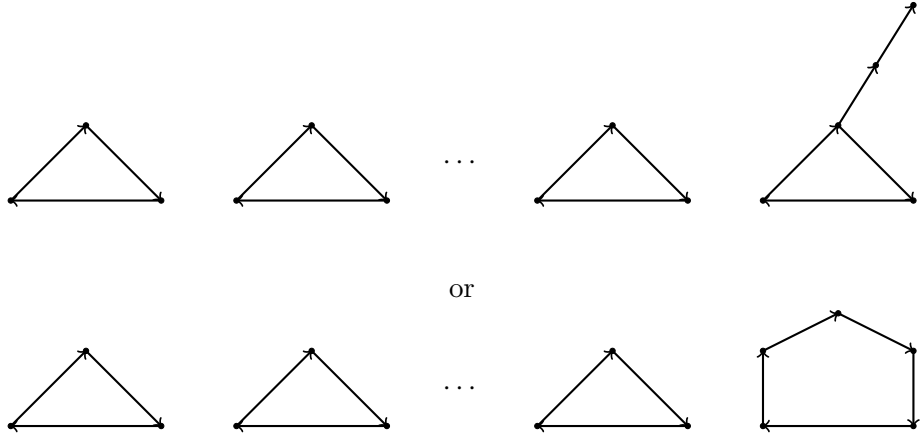

\begin{lemma}
\label{mod2}
Let $H$ be an $I_0$-free graph on $n \geq 11$ vertices such that $n \equiv 2 \text{ mod } 3$, then \[|E(H)| \leq n(n-3) + \frac{n-5}{3}.\] Moreover, the only way for $H$ to attain this maximum number of edges is if the gate of $H$ is a disjoint union of $\frac{n-5}{3}$ directed triangles together with either a directed $C_5$ or a 3-cycle with a directed path of two edges.
\end{lemma}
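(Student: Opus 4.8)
The plan is to mirror the argument already used for Lemma~\ref{mod1}, since the $n\equiv 2\bmod 3$ case differs only in the arithmetic of how many vertices can be packed into directed triangles and what the leftover component must look like. Recall from the discussion preceding Lemma~\ref{mod0} that for any $I_0$-free $H$ on $n\geq 9$ vertices with gate $G$, splitting the components of $G$ into $\mathcal{D}_1,\mathcal{D}_2,\mathcal{D}_3$ gives
\[|E(H)| \leq d_3\left(n-3+\tfrac13\right) + (n-d_3)(n-3) = n(n-3) + \tfrac{d_3}{3},\]
with equality only if $d_1 = 0$. So the whole problem reduces to maximizing $d_3$, the number of vertices lying in directed-triangle components, subject to the constraint that $n - d_3$ vertices are left over and those leftover vertices must form components lying entirely in $\mathcal{D}_2\cup\mathcal{D}_3$ if we are to have any chance of equality — but since $3\mid d_3$ always, and $n\equiv 2\bmod 3$, we can never have $d_3 = n$, nor $d_3 = n-1$ (that would force a single leftover vertex, an isolated point, which is a $C_1$ component lying in $\mathcal{D}_1$, costing us $n-3+\tfrac13$ against only $3$ gained). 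Hence $d_3 \leq n-2$, and we must check what a $2$-vertex shortfall forces.

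First I would dispose of $d_3 = n-2$: the two leftover vertices would have to be covered by components from $\mathcal{D}_2\cup\mathcal{D}_3$ only, but the smallest such component is a directed triangle ($3$ vertices), so $2$ leftover vertices cannot be covered without a $\mathcal{D}_1$ component, forcing $d_1>0$ and hence strict inequality; moreover even a single $\mathcal{D}_1$ component on $2$ vertices (a $C_2$, or a $2$-vertex path) contributes at most $(m-1)(n-2) - m\binom{2(m-1)/m}{2}$ or similar, strictly less than $2(n-3)$ by Lemma~\ref{components}, so $|E(H)| < (n-2)(n-3+\tfrac13) + 2(n-3) = n(n-3) + \tfrac{n-2}{3} - \tfrac23 \cdot \text{something}$ — in any event strictly below the target. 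Then the next feasible value is $d_3 = n-5$, leaving $5$ vertices which must all lie in a single component from $\mathcal{D}_2$ (a directed $C_k$, $k\geq 4$, with no branches, or a $C_3$ with exactly one directed path), since splitting them into a triangle plus two leftover points recreates the impossible case above, and any $\mathcal{D}_1$ component again forces strict inequality. The only $5$-vertex members of $\mathcal{D}_2$ are the directed $C_5$ and the directed triangle with a directed path of two edges attached — exactly the two structures in the statement. This yields $|E(H)| \leq (n-5)(n-3+\tfrac13) + 5(n-3) = n(n-3) + \tfrac{n-5}{3}$, and one checks this beats the $d_3 = n-2$ bound, so it is the true maximum.

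The remaining task is to confirm that these gate structures are actually \emph{achievable} with a genuine $I_0$-free $H$ realizing every bound as equality — i.e.\ that for each directed-triangle component every vertex really is the head of $n-3$ edges and the leftover $\mathcal{D}_2$ component really achieves its stated maximum $5(n-3)$. This is a construction check: on each directed triangle $a\to b\to c\to a$ of the gate, put in all edges $ab\to c$, $bc\to a$, $ca\to b$ together with the $3(n-5)$ edges of the form (triangle-edge tail) $\to$ (triangle vertex) using an outside vertex, taking care that no triple outside the triangle gets a repeated-head $I_0$ pattern; since all heads inside a triangle are "gated" by their unique predecessor, and distinct triangles share no gatekeeper, the $I_0$ obstruction (two edges, common head, disjoint tails) never appears. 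The main obstacle I expect is the bookkeeping in the $d_3 = n-2$ elimination and, more delicately, verifying that the $\mathcal{D}_2$ component on $5$ vertices contributes \emph{exactly} $5(n-3)$ and not less when embedded alongside $n-5$ other vertices — Lemma~\ref{components} only gives the upper bound $m(n-3)$ for these components and says equality is "possible", so one must exhibit the edges explicitly and check $I_0$-freeness across the boundary between the component and the triangles. Everything else is the same Jensen/counting machinery already established, specialized to residue $2$.
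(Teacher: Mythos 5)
Your overall route is the same as the paper's: use the master bound $|E(H)|\leq d_3\left(n-3+\tfrac{1}{3}\right)+(n-d_3)(n-3)$, note $3\mid d_3$ so $d_3\leq n-2$, dispose of $d_3=n-2$, and observe that $d_3\leq n-5$ gives $(n-5)\left(n-3+\tfrac{1}{3}\right)+5(n-3)=n(n-3)+\tfrac{n-5}{3}$ with equality forcing $d_1=0$ and the five leftover vertices to lie in a single $\mathcal{D}_2$ component, which on five vertices must be a $C_5$ or a $C_3$ with a directed two-path. That half of your argument is sound (your aside about $d_3=n-1$ is moot anyway, since $n-1\not\equiv 0 \bmod 3$). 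The genuine gap is in the $d_3=n-2$ elimination, and it matters precisely for the ``only way to attain the maximum'' clause. The master bound at $d_3=n-2$ is $n(n-3)+\tfrac{n-2}{3}$, which is the target \emph{plus one}, so you must beat it by a definite margin. Your estimate for the two leftover vertices is only ``strictly less than $2(n-3)$'', and plugging that in gives $|E(H)|<(n-2)\left(n-3+\tfrac{1}{3}\right)+2(n-3)=n(n-3)+\tfrac{n-2}{3}$; your displayed identity with the ``$-\tfrac{2}{3}\cdot\text{something}$'' term is not correct, the left side equals $n(n-3)+\tfrac{n-2}{3}$ exactly. By integrality this yields only $|E(H)|\leq n(n-3)+\tfrac{n-5}{3}$, i.e.\ at most the target, \emph{not} strictly below it. So your argument leaves open that a gate consisting of $\tfrac{n-2}{3}$ directed triangles plus a two-vertex leftover component also attains the maximum, which would falsify the structural uniqueness you are asked to prove.

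The repair is the sharper accounting the paper uses: if $d_3=n-2$, the two remaining vertices form a component (or components) of $\mathcal{D}_1$ on exactly two vertices, namely two isolated vertices, a single gatekeeper edge, or a $C_2$, contributing at most $6$, $3+(n-2)$, and $n-2$ edges respectively, hence at most $n+1$ in all. Then
\[|E(H)|\leq (n-2)\left(n-3+\tfrac{1}{3}\right)+(n+1)=n(n-3)+\tfrac{n-5}{3}-(n-8),\]
which is strictly below the target for $n\geq 11$, and the case is eliminated outright. (This is also why the hypothesis here is $n\geq 11$ rather than $n\geq 9$.) Finally, your closing paragraph about realizing the gate structures with an actual $I_0$-free graph is not needed for this lemma: the statement is the upper bound together with the conditional ``if the maximum is attained then the gate has this form''; achievability, including the careful deletion of conflicting edges inside each $P(C)$, is handled in the paper's separate lower-bound lemmas.
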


\begin{proof}
Since $n \equiv 2 \text{ mod } 3$, then $d_3 \leq n-2$ and equality implies that $G$ consists of $\frac{n-2}{3}$ disjoint directed triangles and two additional vertices that are either both isolated, contain one edge, or are a $C_2$ giving $6$, $3+(n-2)$, or $n-2$ additional edges respectively. The best we can do when $d_3=n-2$ is therefore, \[|E(H)| \leq (n-2)\left( n-3+ \frac{1}{3} \right) + (n+1).\]

Otherwise, $d_3 \leq n-5$ and the best we can do is \[|E(H)| \leq (n-5)\left( n-3+ \frac{1}{3} \right) + 5(n-3).\] This is better. Moreover, this will happen only when the five non-triangle vertices are in a component (or components) of $G$ that give an average of $n-3$. So they must either make a $C_5$ or a directed triangle with one path.
\end{proof}

\subsection{Lower bound constructions}

The structure of the gates necessary to attain the maximum number of edges for a $I_0$-free graph determined in the previous section are also sufficient. Of these gates, none of them have acyclic components. Therefore, any graph that produces one of these gates has only vertices with stars for tail link graphs. This immediately implies that there is no $I_0$ in any graph that has such a gate.

Moreover, if $H$ is a graph with a gate $G$ that is one of these configurations, then \[E(H) \subseteq \bigcup_{C \in \mathcal{C}} P(C)\] where $\mathcal{C}$ is the set of maximal connected components of $G$. All that is left to do in order to construct an extremal example is to pick which edges of each $P(C)$ to delete in order to eliminate triples of vertices with more than one edge.

\begin{lemma}
Let $H$ be an $I_0$-free graph on $n \geq 9$ vertices such that $n \equiv 0 \text{ mod } 3$, then \[|E(H)| \geq n(n-3) + \frac{n}{3}\] and there is exactly one extremal construction up to isomorphism.
\end{lemma}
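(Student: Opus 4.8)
The plan is to exhibit one explicit $I_0$-free oriented graph $H^{*}$ on $n$ vertices with exactly $n(n-3)+\tfrac{n}{3}$ edges, and then to show that the equality analysis behind Lemma~\ref{mod0} forces every extremal graph to be isomorphic to $H^{*}$.

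For the construction, partition $V(H^{*})$ into $\tfrac{n}{3}$ triples and, on each triple $\{a,b,c\}$, install the directed triangle $a\to b\to c\to a$ as a component of the gate; let $\mathcal{C}$ denote the resulting set of components. Take as edge set $\bigcup_{C\in\mathcal{C}}P(C)$ with exactly two of the three edges lying on each triangle triple deleted. First one checks this is genuinely oriented: the sets $P(C)$ have pairwise disjoint head sets (the head of every edge of $P(C)$ is a vertex of $C$), and every edge of $P(C)$ uses a gate edge of $C$ as one of its tail--head pairs, so a triple carrying two edges of $\bigcup_C P(C)$ must be spanned by two gate edges lying in one component $C$; since each component is a $3$-cycle, that triple is the triangle of $C$ itself, where we have already deleted down to one edge. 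The edge count is $\sum_{C\in\mathcal{C}}\left(|P(C)|-2\right)=\tfrac{n}{3}\left(3(n-2)-2\right)=n(n-3)+\tfrac{n}{3}$. Finally, the gate of $H^{*}$ has no acyclic component, so every tail link graph $T_x$ is a star or empty and hence contains no two disjoint edges; by the equivalence recorded in Figure~\ref{Aiff} this is exactly the statement that $H^{*}$ is $I_0$-free.

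For uniqueness, let $H$ be any $I_0$-free graph on $n$ vertices with $|E(H)|=n(n-3)+\tfrac{n}{3}$. By Lemma~\ref{mod0} the gate $G$ of $H$ is a disjoint union of directed triangles, so $|E(H)|=\sum_{C\in\mathcal{C}}\sum_{x\in V(C)}|T_x|$, and by Lemma~\ref{components} each inner sum is at most $3(n-3)+1$; equality in the total forces equality for every triangle component. As above, the only triples on which $P(C)$ repeats edges are the triangle of $C$, so attaining $\sum_{x\in V(C)}|T_x|=3(n-3)+1=|P(C)|-2$ requires every edge of $P(C)$ off the triangle to be present and exactly one of the three triangle edges to be present. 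Hence the part of $E(H)$ with head in $V(C)$ is determined by the triple, its cyclic gate orientation, and the choice of surviving triangle edge. Reversing the orientation of a triangle is a transposition of two of its vertices, and the three possible surviving triangle edges are permuted by relabeling the triple, so on each triple these data can be moved to one fixed canonical pattern; moreover, a permutation realizing this pattern simultaneously on all triples is an isomorphism of $H$ onto a single canonical graph, because an edge of $H$ whose tail set meets two triples is, on each triple it touches, determined only by which gate edge serves as its tail--head pair. Since $S_n$ is transitive on partitions of $V$ into $\tfrac{n}{3}$ triples, this canonical graph is $H^{*}$, and every extremal graph is isomorphic to it.

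The arithmetic and the observation that $P(C)$ repeats edges only on the triangle of $C$ are routine. The main obstacle is the uniqueness bookkeeping: one must check that the within-triple relabelings (orientation reversal together with the permutation of which triangle edge survives) can be chosen consistently over all $\tfrac{n}{3}$ triples and assembled into one global graph isomorphism --- that is, that the edges running between different triples impose no extra condition, which is precisely where the structural fact ``the tail of every edge of $P(C)$ is the gatekeeper of its head'' does the work.
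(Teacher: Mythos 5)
Your proposal is correct and follows essentially the same route as the paper: Lemma~\ref{mod0} pins the gate down to $\tfrac{n}{3}$ disjoint directed triangles, every edge lies in $\bigcup_C P(C)$, the only triples carrying repeated edges are the triangle triples themselves, so exactly two of the three edges there are deleted, and the three possible deletions per component are equivalent under relabeling. The paper states the final isomorphism step in one sentence, whereas you spell out the orientedness check, the edge count, and the cross-triple bookkeeping explicitly; this is a more detailed write-up of the same argument, not a different one.
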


\begin{proof}
We know from Lemma~\ref{mod0} that the only way $H$ can possibly attain $n(n-3) + \frac{n}{3}$ edges is if its gate is the disjoint union of $\frac{n}{3}$ directed triangles. Therefore, each $P(C_3)$ contains exactly one vertex triple with all three possible edges. So two of these must be deleted for each component in order to arrive at an extremal construction. The three choices for this deletion on each component are all isomorphic to each other. Therefore, there is exactly one extremal construction up to isomorphism.
\end{proof}

\begin{lemma}
Let $H$ be an $I_0$-free graph on $n \geq 9$ vertices such that $n \equiv 1 \text{ mod } 3$, then \[|E(H)| \geq n(n-3) + \frac{n-4}{3}\] and there are exactly 18 extremal constructions up to isomorphism.
\end{lemma}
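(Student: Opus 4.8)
The plan is to combine the structural description in Lemma~\ref{mod1} with an explicit count of choices. By Lemma~\ref{mod1}, any $I_0$-free graph $H$ on $n\ge 9$ vertices with $n\equiv 1\bmod 3$ attaining $n(n-3)+\frac{n-4}{3}$ edges has a gate $G$ that is the disjoint union of $\frac{n-4}{3}$ directed triangles together with one \emph{special} component $C$ on four vertices, where $C$ is either a directed $4$-cycle or a directed triangle with a single pendant directed edge. For the ``$\ge$'' (existence) direction I would start from such a gate $G$ and note, exactly as in the proof of Lemma~\ref{mod0} and the discussion opening this subsection, that any $H$ with gate $G$ satisfies $E(H)\subseteq\bigcup_{C'\in\mathcal C}P(C')$, that every tail link graph $T_x$ is then a star, and hence that $H$ is automatically $I_0$-free. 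So constructing an extremal example amounts to choosing, on each triple of vertices carrying more than one edge of $\bigcup P(C')$, exactly one of those edges to retain (every edge on a triple carrying a unique possible edge is kept). The equality analysis inside Lemma~\ref{components} is precisely the statement that this choice makes $\sum_x|T_x|$ meet the bound $n(n-3)+\frac{n-4}{3}$, which settles the inequality.

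The substantive part is the count of $18$. First I would record the multi-edge triples in each component: for a directed triangle, only the triangle's own triple carries several possible edges (three of them, one retained, so $3$ choices, the three being permuted by the rotation of the directed triangle); for a directed $4$-cycle $x_1\to x_2\to x_3\to x_4\to x_1$, each of the four triples $\{x_i,x_{i+1},x_{i+2}\}$ carries two possible edges, giving $2^4=16$ labelled choices; and for a directed triangle $x_1\to x_2\to x_3\to x_1$ with pendant edge $x_1\to x_4$, the triple $\{x_1,x_2,x_3\}$ ($3$ choices) together with $\{x_1,x_2,x_4\}$ and $\{x_1,x_3,x_4\}$ ($2$ choices each) are the only multi-edge triples, for $3\cdot 2\cdot 2=12$ labelled choices; all remaining triples inside these components carry exactly one possible edge and are forced. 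A short verification — using Lemma~\ref{goofy} to see that a triple spanning two gatekeeper edges carries exactly two possible edges (three only on a directed triangle), and checking that every triple is met by at most one component — confirms these lists and that each such choice produces exactly $n(n-3)+\frac{n-4}{3}$ edges.

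Finally I would pass from labelled choices to isomorphism classes. For an extremal $H$ one checks $|T_x|\ge n-4\ge 5>1$ for every vertex, so each $T_x$ is a star with a uniquely determined centre; hence $g$, and therefore the gate $G$, is recovered from $H$, and any isomorphism between two extremal graphs is forced to be an isomorphism of gates carrying retained-edge choices to retained-edge choices (the forced edges, including those joining the special component to the triangles, being determined by $G$). Thus the number of isomorphism classes of extremal graphs equals the number of $\mathrm{Aut}(G)$-orbits of valid choices, summed over the two isomorphism types of $G$. Since $\mathbb Z_3$ acts transitively on the three choices at each triangle and the symmetric group permutes the $\frac{n-4}{3}$ triangles, the triangle part always contributes a single orbit, so the count collapses to the special component: for the directed $4$-cycle, $\mathrm{Aut}$ is cyclic of order $4$ acting on $16$ two-colourings of a $4$-cycle, and Burnside's lemma gives $\tfrac14(16+2+4+2)=6$; for the triangle with pendant edge the automorphism group is trivial, giving $12$; total $6+12=18$. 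I expect the main obstacle to be exactly this last paragraph: justifying cleanly that distinct $\mathrm{Aut}(G)$-orbits of choices yield genuinely non-isomorphic graphs (so the forced cross-component edges introduce no further coincidences) and getting the $C_4$ Burnside count right; everything before that is routine once Lemma~\ref{mod1} is available.
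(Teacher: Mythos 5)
Your proposal is correct and follows essentially the same route as the paper: Lemma~\ref{mod1} pins down the gate, extremal graphs correspond to choosing one edge per multi-edge triple of $\bigcup P(C)$, the pendant-triangle component gives $3\cdot 2\cdot 2=12$ classes, and the $C_4$ component gives $6$, for a total of $18$. The only differences are cosmetic improvements: you count the $C_4$ case by a Burnside computation (the paper counts distributions $2{,}2{,}0{,}0$ / $2{,}1{,}1{,}0$ / $1{,}1{,}1{,}1$ giving $1+3+2=6$), and you make explicit the recovery of the gate from $H$ via $|T_x|\geq n-4>1$, which the paper leaves implicit.
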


\begin{proof}
We know from Lemma~\ref{mod1} that if $H$ has $n(n-3) + \frac{n-4}{3}$ edges, then its gate is the disjoint union of $\frac{n-4}{3}$ directed triangles with either a directed $C_4$ or a $C_3$ plus an edge on the remaining 4 vertices. As in the previous proof, there is only one choice up to isomorphism for which edges to delete from each $P(C_3)$. However, this will not be true of the last component on the remaining four vertices.

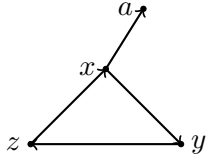
\begin{figure}
  \centering
      \begin{tikzpicture}
      	\filldraw[black] (10,0) circle (1pt);
	\filldraw[black] (11,1) circle (1pt);
	\filldraw[black] (12,0) circle (1pt);
	\filldraw[black] (11.5,1.8) circle (1pt);
	
	\node [left] at (11,1) {$x$};
	\node [left] at (11.5,1.8) {$a$};
	\node [left] at (10,0) {$z$};
	\node [right] at (12,0) {$y$};
	
	\draw[thick,->] (10,0) -- (11,1);
	\draw[thick,->] (11,1) -- (12,0);
	\draw[thick,->] (12,0) -- (10,0);
	\draw[thick,->] (11,1) -- (11.5,1.8);
      \end{tikzpicture}
  \caption{$C_3$ plus an edge}
\end{figure}

First, let's consider the case where the last component is a $C_3$ plus one edge. Call the vertices $\{x,y,z,a\}$ where $x \rightarrow y \rightarrow z \rightarrow x$ is the $C_3$ and $x \rightarrow a$ is the additional edge. First, note that we have the following three mutually exclusive choices for edges with head vertices in this component:
\begin{enumerate}
\item $xa \in T_y$ or $xy \in T_a$,
\item $za \in T_x$ or $xz \in T_a$, and
\item $zx \in T_y$, $yz \in T_x$, or $xy \in T_z$.
\end{enumerate}
This gives 12 choices, and each choice is unique up to isomorphism.

\begin{figure}
  \centering
      \begin{tikzpicture}
      	\filldraw[black] (10,-3) circle (1pt);
	\filldraw[black] (10,-2) circle (1pt);
	\filldraw[black] (12,-2) circle (1pt);
	\filldraw[black] (12,-3) circle (1pt);
	\draw[thick,->] (10,-3) -- (10,-2);
	\draw[thick,->] (10,-2) -- (12,-2);
	\draw[thick,->] (12,-2) -- (12,-3);
	\draw[thick,->] (12,-3) -- (10,-3);
	
	\node [left] at (10, -2.5) {$2$};
	\node [right] at (12, -2.5) {$2$};
      \end{tikzpicture}
  \caption{$C_4$ with 2 additional edges in opposite tail link graphs}
\end{figure}
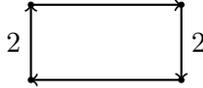

Next consider the case of $C_4$. Each $3$-subset of these four vertices holds two edges of $P(C)$ - one that points along the direction of the two gatekeeper edges and one that points the middle vertex of the two gatekeeper edges. For each triple one of these edges must be deleted to arrive at a legal oriented construction.

Each tail link graph must have at least $n-4$ edges, and combined they must contain four additional edges. Since each can have up to two more edges, then the distribution of these additional edges must be one of the following integer partitions of 4:
\begin{itemize}
\item 2, 2, 0, 0
\item 2, 1, 1, 0
\item 1, 1, 1, 1
\end{itemize}

There is only one choice up to isomorphism with a distribution of 2, 2, 0, 0. Each of the three ways to place 2, 1, 1, 0 around $C_4$ are possible but each distribution has only one way up to isomorphism. Finally, there are two ways up to isomorphism to put an extra edge into each tail link graph. So all together there are six nonisomorphic ways to distribute these extra edges to the $C_4$ tail link graphs.
\end{proof}

\begin{lemma}
Let $H$ be an $I_n$-free graph on $n \geq 9$ vertices such that $n \equiv 2 \text{ mod } 3$, then \[|E(H)| \geq n(n-3) + \frac{n-5}{3}\] and there are exactly 32 extremal constructions up to isomorphism.
\end{lemma}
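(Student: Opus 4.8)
The plan is to mirror exactly the structure of the proof of the previous lemma (the $n \equiv 1 \bmod 3$ case), since Lemma~\ref{mod2} tells us that an extremal $I_0$-free graph on $n \equiv 2 \bmod 3$ vertices has a gate consisting of $\frac{n-5}{3}$ disjoint directed triangles together with \emph{either} a directed $C_5$ \emph{or} a $3$-cycle with a directed path of two edges on the remaining five vertices. As before, the lower bound itself is immediate once we exhibit any legal oriented construction with the right gate: since none of these gates has an acyclic component, every tail link graph is a star and the resulting graph is automatically $I_0$-free, so the only work is counting, up to isomorphism, the number of ways to delete edges from $\bigcup_{C \in \mathcal{C}} P(C)$ so that no triple of vertices carries more than one directed edge. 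On each of the $\frac{n-5}{3}$ directed-triangle components there is (as in the earlier proofs) exactly one choice up to isomorphism for which two of the three edges of $P(C_3)$ on that triangle to delete, so all the counting reduces to the five-vertex component, and the final answer $32$ should come out as a sum of the count for the $C_5$ case and the count for the ``$3$-cycle plus a path of two edges'' case.

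For the $3$-cycle-plus-path-of-two-edges case I would set up notation $x \to y \to z \to x$ for the triangle and $x \to a \to b$ for the attached path (matching the figure in Lemma~\ref{mod1}'s proof but with one more vertex on the branch), list $P(C)$ explicitly, and then identify the independent ``mutually exclusive choice'' groups exactly as was done for the $C_3$-plus-one-edge component: each triple of vertices spanned by two or three gatekeeper edges forces a small menu of which single edge to keep, and triples spanned by a single gatekeeper edge are unconstrained (all $n-2$ of their $P(C)$-edges with that tail may be kept). Counting the product of the sizes of these menus and then quotienting by the (small) automorphism group of the labeled gate component gives the number of non-isomorphic constructions from this family. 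The same is done for $C_5$: each of the $\binom{5}{3}=10$ triples of cycle-vertices carries exactly two edges of $P(C)$ (one ``forward'' along the two spanning gatekeeper edges, one pointing at the middle vertex), exactly one of which survives, and the extra edges beyond the baseline $n-4$ per tail link graph must be distributed according to integer partitions of $5$ into parts of size at most $2$ around the $5$-cycle, namely $2{,}2{,}1{,}0{,}0$, $2{,}1{,}1{,}1{,}0$, and $1{,}1{,}1{,}1{,}1$; one counts the cyclic arrangements of each partition and then the internal choices, using the dihedral symmetry $D_5$ of the cycle to reduce to isomorphism classes.

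The main obstacle is purely combinatorial bookkeeping: getting the isomorphism counts right, especially accounting for the symmetries of each gate component (the $C_5$ has a $10$-element dihedral automorphism group, the $3$-cycle-plus-path has essentially trivial symmetry, but one must be careful whether a cyclic rotation of the $C_5$ that fixes the partition type actually produces an isomorphic \emph{construction} or merely an isomorphic \emph{gate}), and making sure that the two families ($C_5$ and $3$-cycle-plus-path) do not produce overlapping isomorphism classes — they cannot, since their gates are non-isomorphic directed graphs and the gate is recoverable from the graph up to the gatekeeper ambiguity, which only matters for components with a single tail-link edge and hence is irrelevant for these large components. I would verify the split, e.g.\ that the $C_5$ family contributes some number $c_5$ and the path family contributes $c_p$ with $c_5 + c_p = 32$, by the same style of case analysis (partitions of the surplus, choices within each triple, division by the component's automorphism group) used in the $n\equiv 1$ proof, and then conclude. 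I expect the bulk of the verification to be checking that $2{,}2{,}1{,}0{,}0$ and its rotations, $2{,}1{,}1{,}1{,}0$ and its rotations, and $1{,}1{,}1{,}1{,}1$ account for the $C_5$ contribution exactly, and that the three independent binary/ternary menus in the path case multiply and then collapse under isomorphism to the complementary count.
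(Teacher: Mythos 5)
Your plan follows the paper's proof route exactly: reduce to the two possible gates from Lemma~\ref{mod2} (the $\frac{n-5}{3}$ triangles plus either a directed $C_5$ or a $C_3$ with a directed two-edge path), note that these gates automatically give $I_0$-free graphs, and count the non-isomorphic ways of resolving the conflicting triples. The paper does precisely this, getting $24$ constructions for the $C_3$-plus-two-path gate (four independent menus of sizes $2,2,3,2$) and $8$ for the $C_5$ gate (distributions $2,2,1,0,0$; $2,1,1,1,0$; $1,1,1,1,1$ in $2+4+2$ ways), totalling $32$. However, as written your proposal has a genuine gap: the actual enumeration, which is the entire content of the ``exactly 32'' claim, is never carried out --- you only say you ``expect'' $c_5+c_p=32$ --- and two of the setup claims you would build that enumeration on are wrong and would derail it.

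Concretely: first, it is not true that each of the $\binom{5}{3}=10$ triples of $C_5$-vertices carries two edges of $P(C)$. Only the five triples of \emph{consecutive} cycle vertices $\{x_{i-1},x_i,x_{i+1}\}$ are spanned by two gatekeeper edges and hence carry a binary choice (keep $x_{i-1}x_{i+1}\rightarrow x_i$ or $x_{i-1}x_i\rightarrow x_{i+1}$); the remaining triples carry a single forced edge and no choice. Taking $10$ binary choices instead of $5$ gives the wrong labeled count ($2^{10}$ instead of $2^5$). Second, the symmetry group you should quotient by is the cyclic group of order $5$, not the dihedral group $D_5$: an isomorphism of the hypergraph carries tail link stars to tail link stars and hence preserves the direction of every gatekeeper edge, while a reflection of the pentagon reverses the directed $5$-cycle, so no reflection is induced by a hypergraph isomorphism. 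With the correct data, Burnside over the rotations gives $\left(2^5+4\cdot 2\right)/5=8$ classes for the $C_5$ gate, and the trivial-automorphism $C_3$-plus-path gate gives $2\cdot 2\cdot 3\cdot 2=24$, so $8+24=32$; with $D_5$ you would not recover $8$. Fixing these two points and actually performing the counts would turn your outline into the paper's proof.
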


\begin{proof}

We can do the same kind of analysis when $n = 3k+2$ as in the previous proof. We know from Lemma~\ref{mod2} that the gate of any extremal construction must be all directed triangles together with either a directed $C_5$ or a directed triangle with a directed path of length two coming off of it (see Figure~\ref{poss2}).

First, consider the $C_5$ case. Let the vertices be $\{x_0,\ldots, x_4\}$. For each gatekeeper edge, $x_i \rightarrow x_{i+1}$, every edge of the form $x_iv \rightarrow x_{i+1}$ must be an edge in $H$ for any vertex \[v \neq x_i,x_{i+1},x_{i-1},x_{i+2}.\]

Each gatekeeper edge can represent up to two additional edges of $H$, but again, every intersection of gatekeeper edges requires a mutually exclusive choice. Ultimately, we can add 5 additional edges so the extra edges must be distributed in one of the following ways:
\begin{itemize}
\item 2, 2, 1, 0, 0
\item 2, 1, 1, 1, 0
\item 1, 1, 1, 1, 1
\end{itemize}

There are 2 ways to get the first distribution up to isomorphism, 4 ways to get the second, and 2 ways to get the third. Therefore, there are 8 extremal constructions with this gate up to isomorphism.

Now consider the case of a directed triangle with a directed two path coming off of it. If we label the vertices as $\{x,y,z,a,b\}$ (see Figure~\ref{c3plusplus}), the mutually exclusive choices are
\begin{enumerate}
\item $ax \rightarrow y$ or $yx \rightarrow a$,
\item $az \rightarrow x$ or $zx \rightarrow a$,
\item $zx \rightarrow y$, $yz \rightarrow x$, or $xy \rightarrow z$, and
\item $xa \rightarrow b$ or $bx \rightarrow a$
\end{enumerate}

\begin{figure}
  \centering
      \begin{tikzpicture}
      	\filldraw[black] (10,0) circle (1pt);
	\filldraw[black] (11,1) circle (1pt);
	\filldraw[black] (12,0) circle (1pt);
	\filldraw[black] (11.5,1.8) circle (1pt);
	\filldraw[black] (12,2.6) circle (1pt);
	
	\node[left] at (10,0) {$z$};
	\node[left] at (11,1) {$x$};
	\node[right] at (12,0) {$y$};
	\node[left] at (11.5,1.8) {$a$};
	\node[left] at (12,2.6) {$b$};
	
	\draw[thick,->] (10,0) -- (11,1);
	\draw[thick,->] (11,1) -- (12,0);
	\draw[thick,->] (12,0) -- (10,0);
	\draw[thick,->] (11,1) -- (11.5,1.8);
	\draw[thick,->] (11.5,1.8) -- (12,2.6);
      \end{tikzpicture}
  \caption{$C_3^{+2}$}
  \label{c3plusplus}
\end{figure}

This gives 24 ways of reaching the maximum, and each way is unique up to isomorphism. Therefore, there are 32 total distinct extremal graphs up to isomorphism.
\end{proof}

This establishes the main result of this section.

\section{Forbidden $I_1$}

In this section $I_1$ denotes the forbidden graph where two edges intersect in exactly two vertices such that one vertex is a head for both edges and the other is a tail for each edge. That is $V(I_1) = \{a,b,c,d\}$ and $E(I_1) = \{ab \rightarrow c, ad \rightarrow c\}$ (see Figure~\ref{F}).

\begin{theorem}
For all $n \geq 4$, \[\text{ex}(n,I_1)=\text{ex}_o(n,I_1) = n \left\lfloor \frac{n-1}{2} \right\rfloor\]and there are \[\left(\frac{(n-1)!}{2^{\left\lfloor \frac{n-1}{2} \right\rfloor} \left \lfloor \frac{n-1}{2} \right\rfloor !}\right)^n\] labeled graphs that attain this maximum in the standard case.
\end{theorem}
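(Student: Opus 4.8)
The plan is to analyze the problem via tail link graphs, exactly as the $I_0$ case was approached. The key observation is that $H$ is $I_1$-free if and only if, for every vertex $c \in V(H)$, the tail link graph $T_c$ contains no two edges sharing a common vertex — that is, $T_c$ is a \emph{matching}. Indeed, $ab \to c$ and $ad \to c$ both lie in $E(H)$ precisely when $ab$ and $ad$ are edges of $T_c$ sharing the vertex $a$; forbidding $I_1$ (as an injective homomorphic image, so $b \ne d$) is exactly the statement that $T_c$ has maximum degree at most $1$. This observation handles the standard and oriented cases simultaneously, since the condition on each $T_c$ is the same regardless of whether multiple directed edges are allowed on a triple: the extremal examples we build will in fact be oriented, so $\text{ex}_o(n,I_1) = \text{ex}(n,I_1)$ follows once we verify the bound and exhibit oriented constructions.

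For the upper bound, a matching on the $n-1$ vertices of $V(H) \setminus \{c\}$ has at most $\lfloor (n-1)/2 \rfloor$ edges, so
\[
|E(H)| = \sum_{c \in V(H)} |E(T_c)| \le \sum_{c \in V(H)} \left\lfloor \frac{n-1}{2} \right\rfloor = n \left\lfloor \frac{n-1}{2} \right\rfloor.
\]
For the matching lower bound, I would construct $H$ by choosing, for each vertex $c$, a perfect (or near-perfect, if $n-1$ is odd) matching $M_c$ on $V(H) \setminus \{c\}$, and setting $E(H) = \{ab \to c : ab \in M_c\}$. Each $T_c = M_c$ has exactly $\lfloor(n-1)/2\rfloor$ edges, no edge is used twice on the same triple (a triple $\{a,b,c\}$ contributes at most the single directed edge corresponding to whichever pair lies in the relevant matching, so this is even oriented — well, we should check: $ab \to c$, $ab \to$ nothing else since only one head per matching-edge; but $ac \to b$ could also appear if $ac \in M_b$; that is a \emph{different} directed edge on the same triple, so for the oriented construction we need a little more care, see below), and $H$ is $I_1$-free by the characterization. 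This yields $|E(H)| = n\lfloor(n-1)/2\rfloor$, matching the bound.

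The counting statement for the standard case then reduces to a direct enumeration: a labeled extremal graph must have $|E(T_c)| = \lfloor(n-1)/2\rfloor$ for every $c$, i.e. $T_c$ is a maximum matching on the $n-1$ vertices other than $c$, and conversely any choice of such matchings $\{M_c\}_{c \in V(H)}$ gives a distinct labeled extremal graph (in the standard model distinct matchings always give distinct edge sets, since $ab \to c \in E(H) \iff ab \in M_c$, and the choices for different $c$ are independent). The number of maximum matchings on a fixed set of $n-1$ vertices is $(n-1)!/\bigl(2^{\lfloor(n-1)/2\rfloor}\lfloor(n-1)/2\rfloor!\bigr)$ — the standard formula for the number of perfect matchings when $n-1$ is even, and the same formula counts near-perfect matchings when $n-1$ is odd since $\binom{n-1}{1}$ times the number of perfect matchings on the remaining even set telescopes to the same expression. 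Since the matchings for the $n$ different head vertices are chosen independently, the total is the $n$th power of this quantity.

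The main obstacle, and the one subtlety worth spelling out carefully in the writeup, is reconciling the \emph{oriented} extremal number with the construction: a matching-based $H$ as above need not be oriented, because a triple $\{a,b,c\}$ can receive directed edges $ab \to c$, $ac \to b$, and $bc \to a$ coming from three different matchings. To get an oriented extremal example one must choose the matchings $\{M_c\}$ coherently so that each triple is ``claimed'' by at most one head; the clean way is to use a proper edge-coloring / $1$-factorization of $K_{n}$ (or $K_{n-1}$) structure — associate to each vertex $c$ a distinct ``color class'' of a fixed $1$-factorization, so that the pair $ab$ is assigned as a tail to exactly one head. Verifying that such a coherent assignment exists and still achieves $n\lfloor(n-1)/2\rfloor$ edges (which it does, since the color classes partition all $\binom{n}{2}$ or $\binom{n-1}{2}$ pairs and each contributes a full matching) is the one place the argument needs genuine construction rather than bookkeeping; everything else is the matching observation plus counting.
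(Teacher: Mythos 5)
Your upper bound and your treatment of the standard case are correct and essentially identical to the paper's: every tail link graph $T_c$ must be a matching, giving $|E(H)| \leq n\lfloor (n-1)/2\rfloor$, and in the standard model the matchings $M_c$ may be chosen independently, which yields both the lower bound and the stated count of labeled extremal graphs.

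The genuine gap is in the oriented lower bound. You correctly isolate the difficulty (a triple $\{a,b,c\}$ can receive $ab \rightarrow c$, $ac \rightarrow b$, $bc \rightarrow a$ from three different matchings), but the proposed fix does not address it: arranging that each \emph{pair} is a tail to exactly one head is a condition on pairs, not on triples, and it does not imply that each triple holds at most one edge. Your parenthetical verification (``which it does, since the color classes partition all pairs\dots'') only checks the edge count. Concretely, for $n=9$ take the standard cyclic near-one-factorization of $K_9$ on $\mathbb{Z}_9$, in which the class missing vertex $c$ consists of the pairs $\{x,y\}$ with $x+y \equiv 2c \pmod 9$; assigning each class to the vertex it misses gives $xy \rightarrow c$ exactly when $x+y \equiv 2c$, and then the triple $\{0,3,6\}$ carries all three edges $36 \rightarrow 0$, $06 \rightarrow 3$, $03 \rightarrow 6$, so the graph is not oriented even though every pair is a tail exactly once. (This choice does happen to work when $\gcd(n,3)=1$, but that requires exactly the congruence check you omit, and for $3 \mid n$ a different factorization must be produced and proved to exist.) Your sketch is also essentially silent on even $n$, where the $n$ required matchings cannot be the $n-1$ classes of a one-factorization of $K_n$ and some pairs must go unused. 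The paper closes this case with an explicit construction and verification: for even $n$, on $\mathbb{Z}_n$ it takes $(i+2k)(i+2k+1) \rightarrow i$ for $k=1,\dots,\frac{n-2}{2}$ and shows by a short congruence argument that no triple holds two edges; for odd order it adds one vertex $v$, the edges $v(i+1) \rightarrow i$, and a matching of $\mathbb{Z}_n$ pointing to $v$ chosen to avoid consecutive pairs. Something of that concreteness --- or a correct coherence condition on your assignment $\phi$ (namely, never $\phi(ab)=c$ and $\phi(ac)=b$ simultaneously) together with a proof that such an assignment exists for all $n$ --- is needed before $\text{ex}_o(n,I_1) = n\lfloor (n-1)/2\rfloor$ is established.
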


\begin{figure}
	\centering
	\begin{tikzpicture}
	\filldraw[black] (0,2) circle (1pt);
	\filldraw[black] (1,0) circle (1pt);
	\filldraw[black] (-1,0) circle (1pt);
	\filldraw[black] (0,0.75) circle (1pt);
	
	\draw[thick] (0,2) -- (1,0);
	\draw[thick] (0,2) -- (-1,0);
	\draw[thick,->] (0.5,1) -- (0,0.75);
	\draw[thick,->] (-0.5,1) -- (0,0.75);
	
	\node[above] at (0,2) {$a$};
	\node[below] at (0,0.75) {$c$};
	\node[right] at (1,0) {$d$};
	\node[left] at (-1,0) {$b$};
	
	\end{tikzpicture}
	\caption{$I_1$}
	\label{F}
\end{figure}

\begin{proof}
Let $H$ be an $I_1$-free graph on $n$ vertices. For any $x \in V(H)$, $T_x$ is a simple undirected $2$-graph on $n-1$ vertices such that no two edges are adjacent (this is true for either version of the problem). Therefore, the edges of $T_x$ are a matching on at most $n-1$ vertices. So there are at most $\left\lfloor \frac{n-1}{2} \right\rfloor$ edges in $T_x$ for every $x \in V(H)$. Thus, \[|E(H)| = \sum_{x \in V(H)} |T_x| \leq n \left\lfloor \frac{n-1}{2} \right\rfloor.\] This shows the upper bound for both versions.

Now we want to find lower bound constructions. In the standard version of the problem there are many extremal constructions since for each vertex $x$, we may pick any maximum matching on the remaining $n-1$ vertices to serve as the edges of $T_x$. So \[\text{ex}(n,I_1) = n \left\lfloor \frac{n-1}{2} \right\rfloor.\]

Moreover, the number of labeled graphs that attain this maximum equals the number of ways to take a maximum matching to construct each tail link graph. For even $k$, the number of matchings on $k$ vertices is \[M_k = (k-1)M_{k-2}\] since if we fix some vertex, then we can pick any of the remaining $k-1$ vertices to go with it and then take the number of matchings on the remaining $n-2$. Since $M_2=1$, then in general for even $k$, \[M_k = \prod_{i=1}^{\frac{k}{2}}(2i-1).\]

If $k$ is odd, then we can first select the vertex left out of the matching to get \[M_k = kM_{k-1} = k \cdot \prod_{i=1}^{\frac{k-1}{2}}(2i-1) = \prod_{i=1}^{\frac{k+1}{2}}(2i-1).\] Therefore, the number of labeled extremal $I_1$-free graphs on $n$ vertices is \[\left(\prod_{i=1}^{\left\lfloor\frac{n}{2}\right\rfloor}(2i-1) \right)^n = \left(\frac{(n-1)!}{2^{\left\lfloor \frac{n-1}{2} \right\rfloor} \left \lfloor \frac{n-1}{2} \right\rfloor !}\right)^n.\]

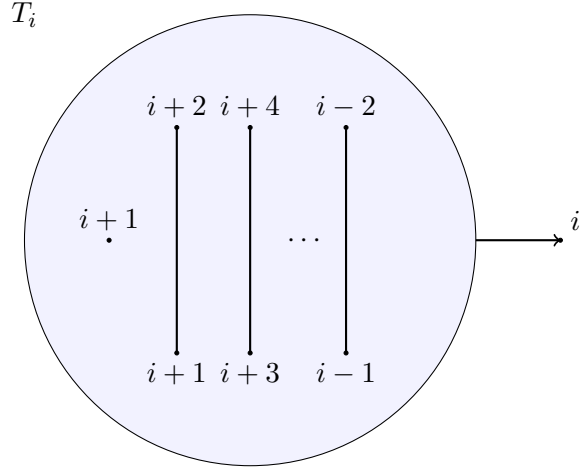
\begin{figure}
  \centering
      \begin{tikzpicture} [scale=0.75]
      \node at (-4,4) {$T_i$};
      
      \filldraw[color=black,fill=blue!5] (0,0) circle [radius=4];
      
      \filldraw[black] (-2.5,0) circle (1pt);
      \node [above] at (-2.5,0) {$i+1$};
      
      \filldraw[black] (-1.3,2) circle (1pt);
      \node [above] at (-1.3,2) {$i+2$};
      \filldraw[black] (0,2) circle (1pt);
      \node [above] at (0,2) {$i+4$};
      \filldraw[black] (1.7,2) circle (1pt);
      \node [above] at (1.7,2) {$i-2$};
      \filldraw[black] (-1.3,-2) circle (1pt);
      \node [below] at (-1.3,-2) {$i+1$};
      \filldraw[black] (0,-2) circle (1pt);
      \node [below] at (0,-2) {$i+3$};
      \filldraw[black] (1.7,-2) circle (1pt);
      \node [below] at (1.7,-2) {$i-1$};
      \draw[thick] (-1.3,2) -- (-1.3,-2);
      \draw[thick] (0,2) -- (0,-2);
      \draw[thick] (1.7,2) -- (1.7,-2);
      
      \node at (1,0) {$\cdots$};
      
      \filldraw[black] (5.5,0) circle (1pt);
      \node [above right] at (5.5,0) {$i$};
      \draw[thick,->] (4,0) -- (5.5,0);
      \end{tikzpicture}
  \caption{$T_i$ in the oriented extremal construction for even $n$}
\end{figure}

In the oriented version of the problem we need to be more careful with the construction. First, assume that $n$ is even and define a graph $H$ with vertex set $V(H) = \mathbb{Z}_n$ and edge set \[E(H) = \bigcup_{i=0}^{n-1} \left\{(i+2k)(i+2k+1) \rightarrow i : k=1,2,\ldots,\frac{n-2}{2} \right\}.\]This construction creates a maximum matching for each tail link graph (with $i+1$ as the odd vertex out for each $T_i$). So $H$ has the extremal number of edges and contains no $I_1$. Therefore, all we need to show is that it has no triple with more than edge.

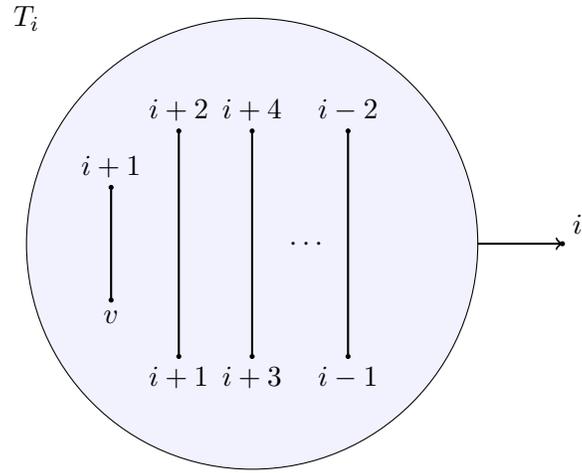
\begin{figure}
  \centering
      \begin{tikzpicture} [scale=0.75]
      \node at (-4,4) {$T_i$};
      
      \filldraw[color=black,fill=blue!5] (0,0) circle [radius=4];
      
      \filldraw[black] (-2.5,1) circle (1pt);
      \node [above] at (-2.5,1) {$i+1$};
      \filldraw[black] (-2.5,-1) circle (1pt);
      \node [below] at (-2.5,-1) {$v$};
      \draw[thick] (-2.5,1) -- (-2.5,-1);
      
      \filldraw[black] (-1.3,2) circle (1pt);
      \node [above] at (-1.3,2) {$i+2$};
      \filldraw[black] (0,2) circle (1pt);
      \node [above] at (0,2) {$i+4$};
      \filldraw[black] (1.7,2) circle (1pt);
      \node [above] at (1.7,2) {$i-2$};
      \filldraw[black] (-1.3,-2) circle (1pt);
      \node [below] at (-1.3,-2) {$i+1$};
      \filldraw[black] (0,-2) circle (1pt);
      \node [below] at (0,-2) {$i+3$};
      \filldraw[black] (1.7,-2) circle (1pt);
      \node [below] at (1.7,-2) {$i-1$};
      \draw[thick] (-1.3,2) -- (-1.3,-2);
      \draw[thick] (0,2) -- (0,-2);
      \draw[thick] (1.7,2) -- (1.7,-2);
      
      \node at (1,0) {$\cdots$};
      
      \filldraw[black] (5.5,0) circle (1pt);
      \node [above right] at (5.5,0) {$i$};
      \draw[thick,->] (4,0) -- (5.5,0);
      \end{tikzpicture}
  \caption{$T_i$ in the oriented extremal construction on $n+1$ vertices for even $n$}
\end{figure}

If $H$ does contain such a triple, then there exist three integers in $\mathbb{Z}_n$ that can be represented as both $\{a,a+2k,a+2k+1\}$ and $\{b,b+2i,b+2i+1\}$ with $a \neq b$. Without loss of generality we can assume that $b=0$. If $a+2k = 0$, then $a+2k+1 = 1$, but $1$ is not in any tail that points at $0$. Therefore, it must be the case that $a+ 2k+1 = 0$, but then $a+2k = n-1$. Therefore, the set is equal to $\{0,n-1,n-2\}$, and $a=n-2$, but $n-1$ does not point to $n-2$, a contradiction. Therefore, $H$ can have no such triple.

\begin{figure}
  \centering
       \begin{tikzpicture} [scale=0.75]
      \node at (-4,4) {$T_v$};
      
      \filldraw[color=black,fill=blue!5] (0,0) circle [radius=4];
      
      \filldraw[black] (-2.5,1) circle (1pt);
      \node [above] at (-2.5,1) {$0$};
      \filldraw[black] (-2.5,-1) circle (1pt);
      \node [below] at (-2.5,-1) {$\frac{n}{2}$};
      \draw[thick] (-2.5,1) -- (-2.5,-1);
      
      \filldraw[black] (-1.3,2) circle (1pt);
      \node [above] at (-1.3,2) {$1$};
      \filldraw[black] (0,2) circle (1pt);
      \node [above] at (0,2) {$2$};
      \filldraw[black] (1.7,2) circle (1pt);
      \node [above] at (1.7,2) {$\frac{n}{2} - 1$};
      \filldraw[black] (-1.3,-2) circle (1pt);
      \node [below] at (-1.3,-2) {$n-1$};
      \filldraw[black] (0,-2) circle (1pt);
      \node [below] at (0,-2) {$n-2$};
      \filldraw[black] (1.7,-2) circle (1pt);
      \node [below] at (1.7,-2) {$\frac{n}{2}+1$};
      \draw[thick] (-1.3,2) -- (-1.3,-2);
      \draw[thick] (0,2) -- (0,-2);
      \draw[thick] (1.7,2) -- (1.7,-2);
      
      \node at (1,0) {$\cdots$};
      
      \filldraw[black] (5.5,0) circle (1pt);
      \node [above right] at (5.5,0) {$v$};
      \draw[thick,->] (4,0) -- (5.5,0);
      \end{tikzpicture}
  \caption{$T_v$ in the oriented extremal construction on $n+1$ vertices for even $n$}
\end{figure}
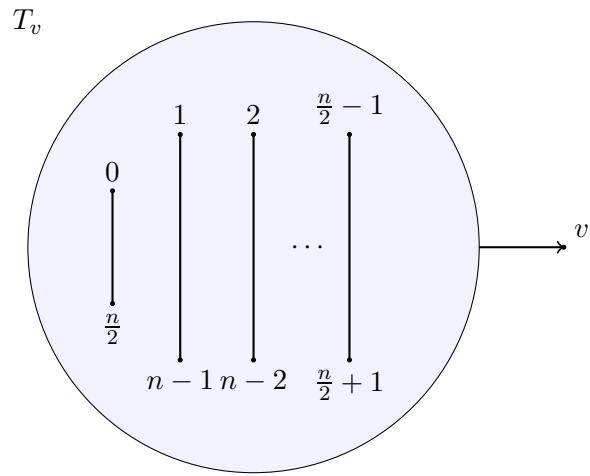

Now, we consider odd $n+1$. Here, let $V(H) = \mathbb{Z}_n \cup \{v\}$ where $v$ is a new vertex and use all the edges from the even construction plus some new ones that all contain $v$. So $E(H) = E_{even} \cup E_{new} \cup E_{v}$ where \[E_{even} = \bigcup_{i=0}^{n-1} \left\{(i+2k)(i+2k+1) \rightarrow i : k=1,2,\ldots,\frac{n-2}{2} \right\},\]and \[E_{new}= \left\{v(i+1) \rightarrow i: i=0,1,\ldots,n-1\right\}.\] Certainly, the construction has so far avoided the forbidden subgraph and given each of the first $n$ vertices the maximum number of tails. Now $E_v$ can be constructed as any set of $\frac{n}{2}$  disjoint pairs of elements from $\mathbb{Z}_n$ all pointing at $v$ so that no pair consists of two sequential numbers mod $n$. So any maximum matching of the $n$ elements that observes this condition will do.

In particular, we can let \[E_v = \left\{(i)(n-i) \rightarrow v : i = 1,\dots,\frac{n}{2}-1\right\} \cup \left\{(0)\left(\frac{n}{2}\right) \rightarrow v\right\}.\] So \[\text{ex}_o(n,I_1) = n \left\lfloor \frac{n-1}{2} \right\rfloor.\]
\end{proof}

\section{Forbidden $H_1$}

\begin{figure}
	\centering
	\begin{tikzpicture}
		\filldraw[black] (0,2) circle (1pt);
		\filldraw[black] (-1,0) circle (1pt);
		\filldraw[black] (1,0) circle (1pt);
		\filldraw[black] (1.5,1.5) circle (1pt);
		\filldraw[black] (-1.5,1.5) circle (1pt);
		
		\draw[thick] (0,2) -- (-1,0);
		\draw[thick] (0,2) -- (1,0);
		\draw[thick,->] (0.5,1) -- (1.5,1.5);
		\draw[thick,->] (-0.5,1) -- (-1.5,1.5);
		
		\node[above] at (0,2) {$x$};
		\node[above] at (-1.5,1.5) {$b$};
		\node[above] at (1.5,1.5) {$d$};
		\node[left] at (-1,0) {$a$};
		\node[right] at (1,0) {$c$};
		
	\end{tikzpicture}
	\caption{$H_1$}
	\label{B}
\end{figure}

In this section $H_1$ denotes the forbidden graph where two edges intersect in exactly one vertex such that it is in the tail for each edge. That is $V(H_1) = \{a,b,c,d,x\}$ and $E(H_1) = \{ax \rightarrow b, cx \rightarrow d\}$ (see Figure~\ref{B}). First we will show the following result for the oriented version of the problem.

\begin{theorem}
\label{exB}
For all $n \geq 6$, \[\text{ex}_o(n,H_1)=\left\lfloor \frac{n}{2} \right\rfloor (n-2).\]
\end{theorem}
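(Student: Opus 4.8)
The plan is to prove matching upper and lower bounds. For the upper bound, the key object is the directed link graph $D_x$ for each vertex $x$. The condition $ax \to b, cx \to d \in E(H)$ with $\{a,b\} \cap \{c,d\} = \emptyset$ being forbidden translates precisely into a structural restriction on $D_x$: its edge set, viewed as a directed $2$-graph on the remaining $n-1$ vertices, cannot contain two edges spanning four distinct vertices. So each $D_x$ must have all of its arcs confined to at most three vertices (a "triangle"-type configuration, possibly with both orientations on a pair, or a union of arcs sharing a common vertex — I would first carefully enumerate the possible $H_1$-free directed $2$-graphs: their underlying simple graphs are triangles or stars, as in the $I_0$ analysis). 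In the oriented setting each pair of vertices carries at most one directed edge overall, and I need to bound $\sum_x |E(D_x)|$; note $\sum_x |E(D_x)| = \sum_x |\{(y,z): xy\to z \in E\}| = \sum_{\text{edges } yz\to z'} 1$ counted by the tail vertex — more precisely each edge $ab\to c$ contributes to $D_a$ and to $D_b$, so $\sum_x |E(D_x)| = 2|E(H)|$. Thus $|E(H)| = \frac12 \sum_x |E(D_x)|$, and bounding each $|E(D_x)|$ does not immediately give the stated bound (it would give roughly $\frac{3n}{2}$ per vertex, far too weak). So the correct approach must instead bound things via the tail-set counting function $t(x,y)$ or via a global orientation/parity argument: the bound $\lfloor n/2\rfloor(n-2)$ strongly suggests pairing up vertices and showing that for "most" pairs $\{x,y\}$ at most one of the two potential roles contributes, or that the number of pairs $\{x,y\}$ that are tails of some edge is at most $\lfloor n/2\rfloor (n-2)$ after accounting for the oriented restriction.

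Reconsidering: the cleanest route is probably to count by tail pairs. Let $t(x,y)$ be the number of heads $c$ with $xy \to c$. Then $|E(H)| = \sum_{\{x,y\}} t(x,y)$. The $H_1$-free condition says: if $xy \to b$ is an edge, then $x$ cannot also be a tail of any edge whose other tail and head avoid $\{y,b\}$, and similarly for $y$. I would try to show that the set of "active" tail pairs (those with $t(x,y) \geq 1$) forms a graph on $n$ vertices in which every vertex has bounded degree, or that the bipartite-like incidence structure forces $\sum t(x,y) \leq \lfloor n/2\rfloor(n-2)$. A promising concrete claim: for each vertex $x$, among all edges having $x$ in the tail, all the *other* tail-endpoints together with all the heads lie in a set of size at most $3$ (this is exactly the $D_x$ restriction). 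Combining this over all $x$ with an averaging/handshake argument, and using that oriented-ness forbids $t(x,y) + t(y,x)$-type double counting, should yield the bound; the factor $\lfloor n/2\rfloor$ will emerge from a matching-type extremal configuration much as in the $I_1$ proof.

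For the lower bound, I would exhibit an explicit oriented $H_1$-free construction with $\lfloor n/2\rfloor (n-2)$ edges. The natural candidate: fix a near-perfect matching $M$ on the $n$ vertices (so $\lfloor n/2 \rfloor$ pairs); for each matched pair $\{x,y\}$, put in all edges $xy \to c$ for every $c \notin \{x,y\}$ — this gives $n-2$ edges per pair, total $\lfloor n/2\rfloor(n-2)$. This is oriented provided no triple is re-used, which holds since distinct matched pairs give distinct underlying triples *unless* two matched pairs together with a common third vertex coincide — but two disjoint matched pairs $\{x,y\}, \{z,w\}$ share no vertex, so $\{x,y,c\} = \{z,w,c'\}$ is impossible; one must still check a triple $\{x,y,c\}$ isn't also realized with tail pair $\{x,c\}$ or $\{y,c\}$, which it is not since $\{x,c\}$ is not a matched pair. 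And it is $H_1$-free because every edge has its tail pair inside $M$, so any vertex $x$ appears as a tail only in edges whose tail pair is the unique $M$-edge at $x$; hence all edges with $x$ in the tail share the second tail vertex $y = M(x)$, so two such edges $xy\to b$, $xy \to d$ intersect in *two* tail vertices, never giving the one-vertex tail intersection of $H_1$. (When $n$ is odd, one vertex is unmatched and simply is never a tail; the count $\lfloor n/2\rfloor(n-2)$ still works.) I would verify the edge count and the $H_1$-freeness of this construction explicitly.

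The main obstacle I anticipate is the upper bound: translating the local constraint "each $D_x$ has arcs confined to $\leq 3$ vertices" into the global bound $\lfloor n/2\rfloor(n-2)$ rather than the naive $O(n^2)$ with the wrong constant. The factor of $\frac12$ must come from the oriented restriction interacting with the tail structure, and pinning down exactly why — likely an argument that the "tail pairs" used form a graph of maximum degree $1$ in a suitable auxiliary sense, forcing a matching and hence the $\lfloor n/2\rfloor$ — is where the real work lies; getting the equality case and the exact floor behavior for odd $n$ will require care.
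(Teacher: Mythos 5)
Your lower bound is correct and is exactly the paper's construction: a (near-)perfect matching with each matched pair pointing to all $n-2$ other vertices, and your verification of orientedness and $H_1$-freeness is sound. The problem is the upper bound, where the proposal has no actual argument and the auxiliary claims you plan to lean on are false. First, the restriction on the directed link graph $D_x$ is \emph{not} that its arcs are confined to at most three vertices: forbidding two independent arcs allows $D_x$ to be a star with up to $n-2$ arcs through a common focus (e.g.\ $xv \rightarrow y$ for every $v$), so your ``promising concrete claim'' that the other tail endpoints together with the heads of all edges having $x$ in the tail lie in a set of size at most $3$ is simply wrong. Ironically, the correct reading of the link-graph restriction ($|D_x|\le n-2$) already gives $|E(H)|=\tfrac12\sum_x|D_x|\le\tfrac12 n(n-2)$, which is not ``far too weak'' but is precisely the theorem for even $n$; the genuine difficulty, which your plan never confronts, is improving this by $\tfrac{n-2}{2}$ when $n$ is odd.

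Second, your anticipated route to the floor --- that the ``active'' tail pairs form a graph of maximum degree one, i.e.\ a matching --- is also false: only pairs that are tails of \emph{at least two} edges must be pairwise disjoint, while pairs used once can be essentially arbitrary (for odd $n$ there is an extremal $H_1$-free graph in which one vertex is the head of all $\binom{n-1}{2}$ edges, so all pairs of the remaining vertices are active tails). The paper's proof turns exactly on this distinction: it shows pairs with $t(x,y)\ge 2$ are disjoint, proves that each such pair $\{x_i,y_i\}$ satisfies $t(x_i,y_i)+\sum_{v\ne x_i,y_i}\bigl(t(x_i,v)+t(y_i,v)\bigr)\le n-2$ (which requires a separate delicate analysis of an exceptional configuration when $t(x,y)=2$, Lemma~\ref{CaseC}), handles the case of no such pair via the structured configurations of Lemmas~\ref{CaseA} and~\ref{CaseB}, and then maximizes $k(n-2)+\binom{n-2k}{2}$ over $k$ to see the bound is attained only at $k=1$ or $k=\lfloor n/2\rfloor$. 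None of this case analysis appears in your proposal, and you yourself note that ``the real work'' of extracting $\lfloor n/2\rfloor$ remains; as it stands the upper bound is unproven, and the two structural claims you hoped to base it on would have to be replaced, not just sharpened.
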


We will use this result to solve the standard version of the problem and get the following.

\begin{theorem}
\label{TypeB}
For all $n \geq 8$, \[\text{ex}(n,H_1) = {n+1 \choose 2} - 3.\]
\end{theorem}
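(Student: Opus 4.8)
The plan is to leverage Theorem~\ref{exB}, the oriented result, together with a careful analysis of how many parallel copies of a directed edge on a single triple an $H_1$-free graph can support. First I would establish the upper bound. Let $H$ be an $H_1$-free graph on $n \geq 8$ vertices. The key structural observation is about the tail link graphs: for each vertex $x$, the edge set $E(T_x) = \{ab : ax \rightarrow b \in E(H)\}$ of the directed link $D_x$ — more precisely, I want to record for each vertex $x$ which vertices appear as a \emph{tail partner} of $x$, i.e., the set $N(x) = \{a : ax \rightarrow b \in E(H) \text{ for some } b\}$. The forbidden configuration $H_1 = \{ax \rightarrow b, cx \rightarrow d\}$ with all of $a,b,c,d,x$ distinct says precisely that two edges cannot share exactly the one vertex $x$ in their tails with everything else distinct. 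I would first argue that in an $H_1$-free graph, for all but a bounded amount of ``slack'' each triple of vertices that carries any edge at all must carry a lot of edges, or equivalently, the underlying $2 \rightarrow 1$ structure collapses onto very few triples. Concretely, I expect the extremal configuration to be: fix a vertex $v$ and a near-perfect matching-free structure — actually, given the target value $\binom{n+1}{2} - 3 = \binom{n}{2} + (n-1) - 3 = \binom{n}{2} + n - 4$, I suspect the extremal example takes \emph{all three} directed edges on each triple within a fixed set, or uses a ``sunflower'' where one vertex $x$ is a universal tail partner. So the first real step is to identify this construction and verify it is $H_1$-free and has exactly $\binom{n+1}{2}-3$ edges.

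For the lower bound I would exhibit the construction explicitly: I anticipate something like picking two special vertices $x,y$, placing the edge $xy \rightarrow z$ (all three orientations) or all edges with tail set containing $x$ pointed at a fixed target, and counting. The cleanest route is probably to take the oriented extremal $H_1$-free graph from Theorem~\ref{exB} (which has $\lfloor n/2 \rfloor (n-2)$ edges) and observe that in the standard model we can do much better because multiplicities are allowed: on a single triple we may place up to three directed edges, and the bottleneck in $H_1$-freeness is really about \emph{distinct} tail partners, not about multiplicities on one triple. So I would look for a construction where a small ``core'' set of triples absorbs many edges. Matching the exact constant $-3$ will require pinning down exactly which small exceptional configurations are allowed, and this is where the case $n \geq 8$ hypothesis enters.

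For the matching upper bound, the plan is: let $H$ be $H_1$-free on $n$ vertices, and for each vertex $x$ consider its directed link graph $D_x$. The condition $H_1$-free translates into a constraint on the family $\{D_x\}_{x}$ jointly — specifically no $D_x$ can contain two edges $a \rightarrow b$ and $c \rightarrow d$ with $\{a,b\} \cap \{c,d\} = \emptyset$ (that is exactly $H_1$ with center $x$). Such a digraph $D_x$ has all of its edges ``pairwise intersecting'' on the underlying vertex set, so by the Hilton–Milner / sunflower-type argument for intersecting families of pairs, either all edges of $D_x$ share a common vertex (a ``star'' through some vertex $s_x$) or $D_x$ lives on at most $3$ vertices (a ``triangle''). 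Summing $|E(H)| = \sum_x |E(D_x)|$ over all $x$, the triangle case contributes $O(1)$ per vertex (at most $6$), and the star case contributes at most... here is the subtlety: if $D_x$ is a star through $s_x$, its edges are of the form $s_x \rightarrow b$ or $b \rightarrow s_x$, meaning edges $xs_x \rightarrow b$ or $xb \rightarrow s_x$ of $H$ — but $xb \rightarrow s_x$ also appears in the link of $b$, so there is double-counting to manage. I would set up a direct edge count by tail set: use the function $t(u,w) = |\{v : uw \rightarrow v\}| \in \{0,1,2,3\}$, note $|E(H)| = \sum_{\{u,w\}} t(u,w)$, and show that the set of pairs $\{u,w\}$ with $t(u,w) > 0$ forms an intersecting family in a suitable sense forced by $H_1$-freeness, hence (Hilton–Milner) is contained in a star plus a triangle, giving at most $(n-1) + 3$ such pairs; then $|E(H)| \leq 3\big((n-1)+3\big)$, which is too weak — so I would instead need the finer observation that most of these pairs carry $t = 1$, not $3$, because a pair carrying $t \geq 2$ together with a disjoint pair carrying $t\geq 1$ already gives $H_1$. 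That refinement — bounding the number of ``heavy'' pairs and trading off against ``light'' pairs — is the main obstacle, and getting the arithmetic to land exactly on $\binom{n+1}{2} - 3$ rather than something off by a constant is where the real work lies.

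Putting it together, the proof structure is: (1) reduce the edge count to a count over tail-pairs via $t(\cdot,\cdot)$; (2) use $H_1$-freeness to force the support of $t$ into a Hilton–Milner structure (a star through one vertex, possibly plus one extra triangle, with all else forbidden); (3) within that structure bound the contribution, separating the unique heavy triple(s) from the light star edges, to get $|E(H)| \leq \binom{n+1}{2} - 3$; (4) exhibit the extremal construction achieving equality and verify directly it is $H_1$-free. I expect step (3) — the exact constant and verifying no exceptional small configuration does better, which forces $n \geq 8$ — to be the hardest part.
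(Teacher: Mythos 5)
Your proposal breaks down at its central structural step, step (2). You claim that $H_1$-freeness forces the set of pairs $\{u,w\}$ with $t(u,w)>0$ into an intersecting (Hilton--Milner, star-plus-triangle) family, and, as a refinement, that a pair with $t\geq 2$ together with a \emph{disjoint} pair with $t\geq 1$ already yields a copy of $H_1$. Both claims are false: two edges whose tail sets are disjoint form either two independent edges or a copy of $I_0$ (if they share a head), never $H_1$, since $H_1$ requires the two edges to share exactly one vertex lying in both tails. The constraint actually runs the other way: it is pairs sharing exactly one vertex that are dangerous, and (as the paper shows in its oriented analysis) any two pairs each serving as a tail set to two or more edges must be \emph{disjoint}. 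Indeed the oriented extremal example consists of $\lfloor n/2\rfloor$ pairwise disjoint heavy tail pairs, and in the standard extremal example \emph{every} pair of vertices is a tail set of some edge: the construction is a sunflower over a two-vertex core $\{u,v\}$ with all three edges on each triple $\{u,v,p\}$ (giving $3(n-2)$ edges) plus all $\binom{n-2}{2}$ edges $pq\rightarrow u$ over pairs of petal vertices, totalling $\binom{n+1}{2}-3$. Consequently any bound of the form ``at most $(n-1)+3$ supported pairs, each contributing at most $3$'' caps $|E(H)|$ at a linear function of $n$ and can never reach the quadratic truth; the fix you propose (trading heavy against light pairs) is built on the false disjointness claim and cannot rescue it. (Also note the small arithmetic slip: $\binom{n+1}{2}-3=\binom{n}{2}+n-3$, not $\binom{n}{2}+n-4$, and your lower-bound construction is never actually pinned down or verified.)

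For comparison, the paper's route is quite different: it counts $|E(H)|\leq t_H+2m_H$ where $t_H$ is the number of triples carrying an edge and $m_H$ the number of \emph{multiedge} triples; it shows any two multiedge triples intersect in $0$ or $2$ vertices, so the multiedge triples form a disjoint union of $K_4^{(3)}$-blocks, $K_4^{-}$-blocks, and sunflowers with a two-vertex core (hence $m_H\leq n$); it then checks case by case on $m_H\in\{n,n-1,n-2,<n-2\}$ that only the sunflower with $n-2$ petals, supplemented by all petal-pair edges aimed at one core vertex, attains $\binom{n+1}{2}-3$, invoking the oriented Theorem~\ref{exB} and the structure of near-extremal oriented graphs (a perfect matching of heavy, pairwise disjoint tail pairs) to dispose of the case of few multiedge triples. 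If you want to salvage your tail-pair counting, you would have to replace the intersecting-family claim with this triple-based analysis or with the correct ``heavy pairs are pairwise disjoint'' structure.
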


First, note that the proof of Theorem~\ref{exB} is straightforward when $n$ is even. To get a lower bound construction we can take a maximum matching of the $n$ vertices and use each pair of this matching as the tail set to point at all $n-2$ other vertices. That is, let $H$ be the graph with vertex set, \[V(H) = \{x_1,\ldots,x_{\frac{n}{2}}, y_1, \ldots, y_{\frac{n}{2}}\}\] and edge set, \[E(H) = \bigcup_{i=1}^{\frac{n}{2}} \left\{x_iy_i \rightarrow z: z \in V(H) \setminus \{x_i,y_i\}\right\}.\]

To show that this is also an upper bound, let $H$ be an $H_1$-free oriented graph on $n$ vertices. Then for any $x \in V(H)$, the directed link graph $D_x$ cannot have two independent edges (see Figure~\ref{Bdirlink}). Therefore, $D_x$ is either empty, a triangle, or a star with at most $n-2$ edges. Since $n \geq 5$, then $|D_x| \leq n-2$ for each $x$. So \[|E(H)| = \frac{1}{2} \sum_{x \in V(H)} |D_x| \leq \frac{1}{2}n(n-2).\] So we are finished for even $n$. However, this proof falls apart when $n$ is odd. We will need a different strategy.

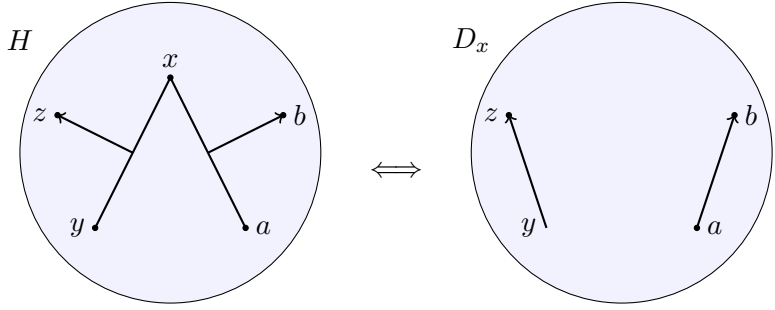
\begin{figure}
	\centering
	\begin{tikzpicture}
		\filldraw[color=black,fill=blue!5] (0,1) circle [radius=2];
	
		\node at (-2,2.5) {$H$};
		
		\filldraw[black] (0,2) circle (1pt);
		\node [above] at (0,2) {$x$};
		\filldraw[black] (-1,0) circle (1pt);
		\node [left] at (-1,0) {$y$};
		\filldraw[black] (1,0) circle (1pt);
		\node [right] at (1,0) {$a$};
		\filldraw[black] (1.5,1.5) circle (1pt);
		\node [right] at (1.5,1.5) {$b$};
		\filldraw[black] (-1.5,1.5) circle (1pt);
		\node [left] at (-1.5,1.5) {$z$};
		
		\draw[thick] (0,2) -- (-1,0);
		\draw[thick] (0,2) -- (1,0);
		\draw[thick,->] (0.5,1) -- (1.5,1.5);
		\draw[thick,->] (-0.5,1) -- (-1.5,1.5);
		
		\filldraw[color=black,fill=blue!5] (6,1) circle [radius=2];
		
		\node at (4,2.5) {$D_x$};
		
		\filldraw[black] (4.5,1.5) circle (1pt);
		\node [left] at (4.5,1.5) {$z$};
		\node [left] at (5,0) {$y$};
		\filldraw[black] (7,0) circle (1pt);
		\node [right] at (7,0) {$a$};
		\filldraw[black] (7.5,1.5) circle (1pt);
		\node [right] at (7.5,1.5) {$b$};
		
		\draw[thick,->] (7,0) -- (7.5,1.5);
		\draw[thick,->] (5,0) -- (4.5,1.5);
		
		\node at (3,0.75) {$\iff$};
	\end{tikzpicture}
	\caption{$H$ has a copy of $H_1$ with intersection vertex $x$ if and only if the directed link graph $D_x$ has a pair of disjoint directed edges.}
	\label{Bdirlink}
\end{figure}

\subsection{Counting edges by possible tail pairs}

The basis of our strategy in getting an upper bound on $\text{ex}_o(n,H_1)$ is to count the edges of an $H_1$-free graph $H$ by its tail sets. That is, \[|E(H)| = \sum_{\{x,y\} \in {V(H) \choose 2}} t(x,y)\] It is simple but important to note that if $H$ is $H_1$-free, then any two pairs of vertices that each point to two or more other vertices must necessarily be disjoint.

\begin{lemma}
Let $H$ be a $H_1$-free oriented graph. If $x_1,x_2,y_1,y_2 \in V(H)$ so that $t(x_1,y_1),t(x_2,y_2) \geq 2$ and $\{x_1,y_1\} \neq \{x_2,y_2\}$, then $\{x_1,y_1\} \cap \{x_2,y_2\} = \emptyset$.
\end{lemma}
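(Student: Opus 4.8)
The plan is to argue by contradiction, assuming the two tail pairs share a vertex and then extracting an explicit copy of $H_1$, with the ``oriented'' hypothesis doing the essential work at one point.

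\textbf{Setup.} Suppose $\{x_1,y_1\}\cap\{x_2,y_2\}\neq\emptyset$. Since $t(\cdot,\cdot)$ is symmetric in its two arguments, I may relabel within each pair so that the common vertex is $x:=x_1=x_2$; and since $\{x_1,y_1\}\neq\{x_2,y_2\}$, this forces $y_1\neq y_2$. Let $B=\{b:xy_1\to b\in E(H)\}$ and $D=\{d:xy_2\to d\in E(H)\}$. By hypothesis $|B|\geq 2$ and $|D|\geq 2$; moreover every element of $B$ differs from $x$ and from $y_1$ (a head is distinct from its tails), and likewise every element of $D$ differs from $x$ and from $y_2$.

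\textbf{The one subtle point.} I claim we cannot have both $y_2\in B$ and $y_1\in D$. Indeed, $y_2\in B$ means $xy_1\to y_2\in E(H)$ and $y_1\in D$ means $xy_2\to y_1\in E(H)$; these are two distinct directed edges on the same underlying triple $\{x,y_1,y_2\}$, which is impossible in an oriented graph. This is the step where orientedness is genuinely needed: if it failed, the four edges $xy_1\to y_2$, $xy_1\to b'$, $xy_2\to y_1$, $xy_2\to b'$ with $b'=$ the second head in each pair could all live on just four vertices and contain no $H_1$, so one really must rule this configuration out. Having done so, swap the roles of the two pairs if necessary so that $y_1\notin D$.

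\textbf{Building the $H_1$.} Choose $b\in B\setminus\{y_2\}$, which is nonempty since $|B|\geq 2$, and then choose $d\in D\setminus\{b\}$, which is nonempty since $|D|\geq 2$; note $d\neq y_1$ automatically because $y_1\notin D$. A routine check then shows $x,y_1,y_2,b,d$ are five distinct vertices, and $xy_1\to b$ and $xy_2\to d$ are edges of $H$ whose only common vertex is $x$, a tail of each. Hence $(a,b,c,d,x)\mapsto(y_1,b,y_2,d,x)$ is an injective homomorphism $H_1\to H$, contradicting $H_1$-freeness. Therefore $\{x_1,y_1\}\cap\{x_2,y_2\}=\emptyset$.

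I do not anticipate any real obstacle; the only thing to be careful about is flagging explicitly why the pair of edges on $\{x,y_1,y_2\}$ cannot both occur, since that is exactly the hypothesis distinguishing this lemma from the (false) analogue in the standard model, and it is what keeps the five chosen vertices distinct.
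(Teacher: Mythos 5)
Your proof is correct and takes essentially the same route as the paper's: both arguments reduce to the common-vertex case $x=x_1=x_2$, use orientedness to forbid having both $xy_1 \rightarrow y_2$ and $xy_2 \rightarrow y_1$ on the triple $\{x,y_1,y_2\}$, and otherwise extract two edges meeting only in the shared tail vertex $x$ to exhibit an injective copy of $H_1$. The only difference is organizational (you rule out the oriented violation up front and then choose the heads $b$ and $d$ greedily to be distinct, whereas the paper splits on whether the two chosen heads coincide), so the underlying argument is the same.
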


\begin{proof}
Suppose, towards a contradiction, that $x_1 = x_2 = x$ but $y_1 \neq y_2$. Since $t(x,y_1) \geq 2$, then there exists some vertex $z_1$ distinct from $x$, $y_1$, and $y_2$ such that \[xy_1 \rightarrow z_1 \in E(H).\] Similarly, since $t(x,y_2) \geq 2$, then there exists some vertex $z_2$ distinct from $x$, $y_1$, and $y_2$ such that \[xy_2 \rightarrow z_2 \in E(H).\] If $z_1 \neq z_2$  this gives a copy of $H_1$.

So assume that they are the same vertex, $z_1=z_2=z$. Since $t(x,y_1) \geq 2$, then there is some second vertex that $x$ and $y_1$ point to that is distinct from $z$. The only choice that would not create a copy of $H_1$ with the edge $xy_2 \rightarrow z$ is $y_2$. Similarly, since $t(x,y_2) \geq 2$, then there is some second vertex that $x$ and $y_2$ point to that is distinct from $z$. The only choice that would not create a copy of $H_1$ with the edge $xy_1 \rightarrow z$ is $y_1$. So \[xy_1 \rightarrow y_2, xy_2 \rightarrow y_1 \in E(H)\] which contradicts the fact that $H$ is oriented.
\end{proof}

Therefore, if we assume that $H$ is $H_1$-free on $n$ vertices, then we can split its vertices up into $k$ disjoint pairs that each serve as tail sets to at least two edges of $H$ plus a set of $n-2k$ vertices that belong to no such pair. That is, \[V(H) = \{x_1,y_1,\ldots,x_k,y_k\} \cup R\] so that $t(x_i,y_i) \geq 2$ for $i=1,\ldots,k$ and $t(w,v) \leq 1$ for all other vertex pairs, $\{w,v\}$ (see Figure~\ref{twopluspairs}).

\begin{figure}
	\centering
	\begin{tikzpicture}
	\node at (-1,4) {$H$};
	
	\filldraw[black] (0,3) circle (1pt);
	\node [left] at (0,3) {$x_1$};
	\filldraw[black] (0,2) circle (1pt);
	\node [left] at (0,2) {$x_2$};
	\filldraw[black] (0,0) circle (1pt);
	\node [left] at (0,0) {$x_k$};
	
	\filldraw[black] (2,3) circle (1pt);
	\node [right] at (2,3) {$y_1$};
	\filldraw[black] (2,2) circle (1pt);
	\node [right] at (2,2) {$y_2$};
	\filldraw[black] (2,0) circle (1pt);
	\node [right] at (2,0) {$y_k$};
	
	\draw[thick] (0,3) -- (2,3);
	\draw[thick] (0,2) -- (2,2);
	\draw[thick] (0,0) -- (2,0);
	
	\node at (1,1) {$\vdots$};
	
	\filldraw[color=black,fill=blue!5] (5,1.5) circle [radius=1.5];
	\node at (5,1.5) {$R$};
	\end{tikzpicture}
	\caption{An $H_1$-free graph on $n$ vertices breaks down into $k$ disjoint pairs that each point to at least two other vertices plus a remainder set $R$ with $n-2k$ vertices that belong to no such pair.}
	\label{twopluspairs}
\end{figure}
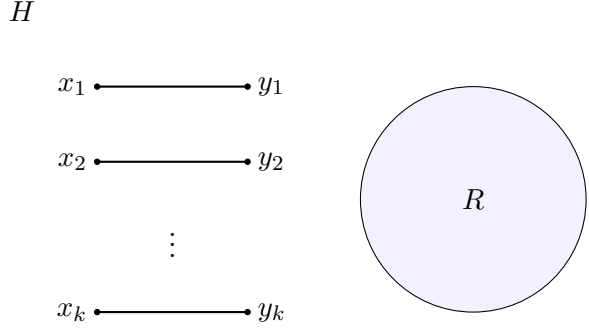

We now have two cases to consider. Either there are no such pairs ($k=0$) or there is at least one ($k \geq 1$).

\subsection{No pair points to more than one vertex ($k=0$)}

Assume that $k=0$. Then $t(x,y) \leq 1$ for every pair $\{x,y\} \in {V(H) \choose 2}$. If $|D_x| \leq n-3$ for all $x \in V(H)$, then \[|E(H)| = \frac{1}{2} \sum_{x \in V(H)} |D_x| \leq \frac{1}{2}n(n-3) < \frac{1}{2}(n-1)(n-2)\] and we are done. Otherwise, there exists some vertex $x$ that belongs to $n-2$ tail sets. Therefore, $D_x$ is a star of directed edges with some focus $y$. Either $t(x,y)=0$ or $t(x,y)=1$.

If $t(x,y) = 0$, then all of the $n-2$ directed edges of $D_x$ must point to $y$ (see Figure~\ref{CaseApic}). Such a configuration in $H$ limits the number of edges to ${n-1 \choose 2}$ as proven in Lemma~\ref{CaseA}.

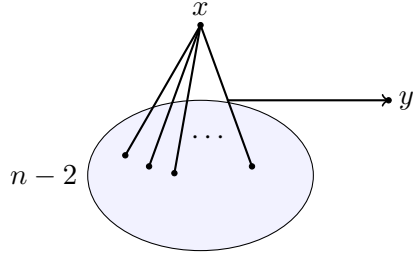
\begin{figure}
	\centering
	\begin{tikzpicture}
		\filldraw[color=black,fill=blue!5] (0,-2) ellipse (1.5 and 1);
		\node [left] at (-1.5, -2) {$n-2$};
	
		\filldraw[black] (0,0) circle (1pt);
		\filldraw[black] (xyz polar cs:angle=240, radius=2) circle (1pt);
		\filldraw[black] (xyz polar cs:angle=250, radius=2) circle (1pt);
		\filldraw[black] (xyz polar cs:angle=260, radius=2) circle (1pt);
		\filldraw[black] (xyz polar cs:angle=290, radius=2) circle (1pt);
		\filldraw[black] (2.5,-1) circle (1pt);
		
		\draw[thick] (0,0)--(xyz polar cs:angle=240, radius=2);
		\draw[thick] (0,0)--(xyz polar cs:angle=250, radius=2);
		\draw[thick] (0,0)--(xyz polar cs:angle=260, radius=2);
		\draw[thick] (0,0)--(xyz polar cs:angle=290, radius=2);
		\draw[thick,->] (0.35,-1)--(2.5,-1);
		
		\node [above] at (0,0) {$x$};
		\node at (xyz polar cs:angle=275, radius=1.5) {$\cdots$};
		\node [right] at (2.5,-1) {$y$};
	\end{tikzpicture}
	\caption{The special case configuration discussed in Lemma~\ref{CaseA}. Here, vertex $x$ joins with every other element to point to vertex $y$.}
	\label{CaseApic}
\end{figure}

On the other hand, if $t(x,y) = 1$, then $xy \rightarrow z \in E(H)$ for some vertex $z$, and $xv \rightarrow y$ for all other vertices $v \neq x,y,z$. Such a configuration in $H$ will limit the number of edges to ${n-1 \choose 2}$ as proven in Lemma~\ref{CaseB}.

\begin{lemma}
\label{CaseA}
Let $H$ be an oriented graph on $n \geq 6$ vertices such that $t(x,y) \leq 1$ for each pair $\{x,y\} \in {V(H) \choose 2}$. If $H$ is $H_1$-free and contains vertices $x$ and $y$ such that $xv \rightarrow y \in E(H)$ for each $v \in V(H) \setminus \{x,y\}$, then \[|E(H)| \leq {n-1 \choose 2}.\] See Figure~\ref{CaseApic}.
\end{lemma}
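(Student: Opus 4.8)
The plan is to exploit the rigid ``fan'' configuration at $x$ and $y$ to constrain all the remaining edges. Write $V(H) = \{x,y\} \cup W$ where $|W| = n-2$, so that $xw \rightarrow y \in E(H)$ for every $w \in W$. Since $t(x,y) \le 1$ and $xw \rightarrow y$ already uses the tail pair $\{x,w\}$ for each $w$, the hypothesis $H_1$-free forces strong restrictions: any edge of the form $xw \rightarrow z$ with $z \neq y$ would, together with $xw' \rightarrow y$ for $w' \neq w$, create a copy of $H_1$ with intersection vertex $x$. Hence for each $w \in W$ the only edge with $x$ in its tail and $w$ as the partner is $xw \rightarrow y$ itself; that is, $t(x,w) = 1$ for all $w \in W$, and the pair $\{x,w\}$ contributes exactly one edge. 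Also $t(x,y) \in \{0,1\}$. So the total contribution of all tail pairs containing $x$ is at most $(n-2) + 1 = n-1$.

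Next I would bound the edges whose tail pair lies entirely in $W \cup \{y\}$, i.e.\ does not contain $x$. The key observation is that $y$ cannot be a useful ``hub'': if some pair $\{y,w\}$ had $t(y,w) \ge 2$, say $yw \rightarrow z_1$ and $yw \rightarrow z_2$ with $z_1 \ne z_2$, then picking $w' \in W \setminus \{w, z_1, z_2\}$ (possible since $n \ge 6$) the edge $xw' \rightarrow y$ shares only the vertex $y$ in its tail with... wait — more directly, $yw \rightarrow z_1$ and $xw' \rightarrow y$ do not overlap in the right way; instead I will use that $yw \rightarrow z_1$ together with $yw'' \rightarrow z$ for a suitable second pair at $y$, or more cleanly: the edges $xw_1 \rightarrow y$ and $xw_2 \rightarrow y$ already saturate what $x$ can do, and any edge $vw \rightarrow z$ with $\{v,w\} \cap \{x\} = \emptyset$ must avoid forming $H_1$ with these. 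The cleanest route is a direct counting argument: the pairs contained in $W$ number $\binom{n-2}{2}$, the pairs $\{y,w\}$ for $w \in W$ number $n-2$, and the pair $\{x,y\}$ plus the pairs $\{x,w\}$ were already handled. Since $t(x,y)\le 1$ and $t(\cdot,\cdot)\le 1$ on every pair not containing $x$ by hypothesis (note: only pairs through $x$ are forced; but actually the lemma's hypothesis is $t \le 1$ on \emph{every} pair), every pair not containing $x$ contributes at most $1$. Thus
\[
|E(H)| \le \underbrace{(n-2)}_{\{x,w\}} + \underbrace{t(x,y)}_{\le 1} + \underbrace{\binom{n-2}{2} + (n-2)}_{\text{pairs in } W\cup\{y\}\text{, all }\le 1}.
\]
This overcounts and is too weak, so the real work is to show the pairs inside $W \cup \{y\}$ cannot all be used simultaneously.

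The main obstacle, and where I'd concentrate the effort, is showing that the ``cheap'' pairs — those inside $W \cup \{y\}$ — can contribute at most $\binom{n-1}{2} - (n-1)$ edges, so that together with the $n-1$ edges through $x$ we get exactly $\binom{n-1}{2}$. I expect the mechanism to be: for each $w \in W$, no edge of $H$ may point \emph{into} $w$ using two tails both in $W \cup \{y\}$ if... rather, the argument should track, for each $w \in W$, that $w$ is a tail-partner of $x$ pointing at $y$, and then a copy of $H_1$ with intersection vertex $w$ is created by $wx \rightarrow y$ together with any edge $wu \rightarrow z$ where $u \ne x$ and $\{u,z\} \cap \{x,y\} = \emptyset$. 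This kills most edges with $w$ in the tail; carefully, the edge $wu \rightarrow z$ is forbidden unless $z \in \{x, y\}$ or $u = x$. Therefore every edge with tail pair inside $W$ must have head in $\{x,y\}$; and since on the pair $\{w,u\}$ we have $t \le 1$, each unordered pair in $W$ contributes at most one such edge, giving at most $\binom{n-2}{2}$; combined with edges headed into $W$ (whose tails must then involve $x$ or $y$, already counted or bounded by $n-2$ from the $\{y,w\}$ pairs, themselves constrained similarly), a bookkeeping of directed degrees should collapse the bound to $\binom{n-1}{2}$. I would finish by organizing this as: total $=$ (edges with $x$ in the tail) $+$ (edges with head $x$) $+$ (edges entirely within $W\cup\{y\}$ avoiding $x$), show the first is $\le n-1$ from the fan-rigidity, show edges with head $x$ number at most $\binom{n-2}{2}$-ish after the $H_1$ restriction at each $w$, and show the last class is empty or tiny — then verify the arithmetic lands on $\binom{n-1}{2}$, and note the construction attaining it (e.g.\ all edges within $\{y\}\cup W$ in one fixed orientation plus the fan) shows tightness.
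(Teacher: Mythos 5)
Your opening restrictions are correct and match the easy half of the paper's argument: every additional edge must live on a triple meeting $\{x,y\}$ (an edge avoiding $x,y$ entirely makes a copy of $H_1$ with a fan edge $vx\rightarrow y$), $x$ can sit in no further tails, and any edge with both tail vertices $v,w$ in $W=V(H)\setminus\{x,y\}$ must have head $x$ or $y$, hence at most $\binom{n-2}{2}$ such edges. But the proof stops exactly where the lemma becomes nontrivial. The remaining class --- edges with $y$ in the tail and head in $W$ --- is not ``empty or tiny'' by any argument you give: the only a priori bound is that $D_y$ is a star avoiding $x$, allowing up to $n-3$ of them, so the naive total is $\binom{n-2}{2}+(n-3)$ additional edges, which overshoots. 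Showing that the two classes together contribute at most $\binom{n-2}{2}$ is the entire content of the paper's proof, and it requires the interaction you wave at but never execute: if some pair $\{v,w\}\subseteq W$ is used twice (say $vw\rightarrow x$ and $yv\rightarrow w$), then $v$ is barred from tails with the other $n-4$ vertices of $W$, so one needs at least $n-3$ $y$-tailed edges; then $D_y$ must be a star centered at $v$ or $w$, all pair-in-$W$ edges are forced to point to $x$, and either center produces a copy of $H_1$. Your ``bookkeeping of directed degrees should collapse the bound'' is precisely this missing argument, not a routine verification.

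There is also a small but real arithmetic slip in the way you organize the count: you allow $t(x,y)\leq 1$, so your budget for tails through $x$ is $n-1$, and even if everything else worked you would land at $(n-1)+\binom{n-2}{2}=\binom{n-1}{2}+1$. You need $t(x,y)=0$, which does hold here --- every triple $\{x,y,v\}$ already carries a fan edge and $H$ is oriented --- but it must be said. (Also, when you invoke $xw\rightarrow z$ versus $xw'\rightarrow y$ to create $H_1$, you must choose $w'\neq z$ as well as $w'\neq w$; this is fine since $|W|\geq 4$, but as written the quantifier is wrong. In fact that whole step is unnecessary: $t(x,w)\leq 1$ is already a hypothesis.)
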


\begin{proof}
We want to show that there can be no more than ${n-2 \choose 2}$ additional edges in $H$ other than the $n-2$ edges described in the statement of the lemma. This would give an upper bound on the total number of edges of \[{n-2 \choose 2} + (n-2) = {n-1 \choose 2}.\]

First, note that every triple of the form $\{x,y,v\}$ already holds an edge. This implies that any additional edge cannot contain both $x$ and $y$ since $H$ is oriented. On the other hand, if we were to add an edge, $vw \rightarrow u$, that excluded both $x$ and $y$ completely, then this new edge would create a copy of $H_1$ with the existing edge, $vx \rightarrow y$. Therefore, every additional edge must be on a triple of the form $\{v,w,x\}$ or $\{v,w,y\}$.

However, $x$ is already in the maximum number of tails. So given any pair of non-$\{x,y\}$ vertices, $\{v,w\}$, the only possible additional edges are \[vw \rightarrow x, vw \rightarrow y, yv \rightarrow w, \text{and } yw \rightarrow v.\] The last three all appear on the triple, $\{v,w,y\}$ and are therefore mutually exclusive choices when it comes to adding them to the graph. The first two are also mutually exclusive choices since $t(v,w) \leq 1$.

So assume, towards a contradiction, that we could add ${n-2 \choose 2} + 1$ more edges to the existing configuration. Then some pair $\{v,w\}$ of non-$\{x,y\}$ vertices must be used twice. Without loss of generality, this means we must add the edges $vw \rightarrow x$ and $yv \rightarrow w$.

Now, let $u$ be any of the remaining $n-4$ vertices. The possible edge $uv \rightarrow y$ would create a copy of $H_1$ with $vw \rightarrow x$, and the possible edge $uv \rightarrow x$ would create a copy of $H_1$ with $vy \rightarrow w$. Therefore, the pair $\{v,u\}$ cannot be a tail set for any edge.

We can also view the potential additional edges as two different types: those that have a tail set of two non-$\{x,y\}$ vertices and those that have a tail set that includes $y$. There were originally at most ${n-2 \choose 2}$ of the first type that we are allowed to add in total, one edge for every distinct pair. However, $v$ can now no longer be in a tail set with any of the other $n-4$ vertices. So there are now at most ${n-2 \choose 2} - (n-4)$ edges of this first type left possible to add. Therefore, in order to add ${n-2 \choose 2} + 1$ edges over all, we will need at least $n-3$ of them to be of the second type - those that have $y$ in the tail set.

Note that $x$ must be an isolated vertex in the directed link graph $D_y$. Hence, there are at most $n-3$ tails containing $y$ since otherwise the directed graph $D_y$ would have $n-2$ edges among $n-2$ vertices. In this case, $D_y$ would have two independent directed edges and so $H$ would have a copy of $H_1$ with $y$ as its intersection vertex. Moreover, $D_y$ must be a star with a single vertex of intersection. Since $v \rightarrow w \in E(D_y)$, then this vertex of intersection must either be $v$ or $w$.

Hence, in order to add ${n-2 \choose 2} + 1$ edges, we'll need to have ${n-2 \choose 2} - (n-4)$ edges that have non-$\{x,y\}$ tail sets. Since the tail set, $\{v,w\}$, already points to $x$, then this implies that all such edges must also point to $x$. Otherwise, we'd have some edge of the form $ab \rightarrow y$. If $a=w$ or $b=w$, then this would create a copy of $H_1$ with $vw \rightarrow x$. If both elements are distinct from $w$, then we would still need to point the pair $wa$ either to $x$ or to $y$. Either choice would create a copy of $H_1$.

Let $u$ be one of the remaining vertices. Then $u$ must be adjacent to a directed edge of $D_y$ for there to be $n-3$ edges added with $y$ in the tail set. If $v$ is the vertex of intersection of $D_y$, then this edge must either be $u \rightarrow v$ or $v \rightarrow u$. Either yields a copy of $H_1$. Similarly, if $w$ is the vertex of intersection of $D_y$, then either $wy \rightarrow u \in E(H)$ or $uy \rightarrow w \in E(H)$. Again, either of these yields a copy of $H_1$. Therefore, ${n-2 \choose 2} + 1$ edges cannot be added to the existing configuration.
\end{proof}

\begin{lemma}
\label{CaseB}
Let $H$ be an oriented graph on $n \geq 6$ vertices such that for each pair $x,y \in V(H)$, $t(x,y) \leq 1$. If $H$ is $H_1$-free and contains vertices $x$, $y$, and $z$ such that $xy \rightarrow z \in E(H)$ and $xv \rightarrow y \in E(H)$ for each $v \in V(H) \setminus \{x,y,z\}$ (see Figure~\ref{Bpic}), then \[|E(H)| \leq {n-1 \choose 2}.\]
\end{lemma}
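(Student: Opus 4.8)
The plan is to mirror the structure of the proof of Lemma~\ref{CaseA}: we already have $n-2$ edges present (the edge $xy\rightarrow z$ together with the $n-3$ edges $xv\rightarrow y$), so it suffices to show that at most ${n-2\choose 2}$ further edges can be added, giving the bound ${n-2\choose 2}+(n-2)={n-1\choose 2}$. First I would argue that since $x$ already appears in $n-2$ tail sets (namely $\{x,y\}$ and each $\{x,v\}$), it is in the maximum possible number of tail sets, so no new edge can have $x$ in its tail; combined with $t(a,b)\le 1$ this already restricts where new edges can live. Next I would show every new edge must meet $\{x,y\}$: an edge disjoint from both $x$ and $y$ would, together with some $vx\rightarrow y$, create a copy of $H_1$ sharing the tail vertex $x$ (being careful about the one vertex $z$ which is a head rather than a tail — but since $n\ge 6$ there are enough $v$'s with $vx\rightarrow y\in E(H)$ to dodge this). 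Edges through $x$ are impossible as a tail, and $x$ as a head on a triple $\{x,a,b\}$ with $a,b\notin\{x,y\}$ forces the tail $ab$, which is fine, so the surviving candidate new edges are supported on triples $\{v,w,x\}$, $\{v,w,y\}$, or involve the special vertex $z$.

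The key counting step is then: partition potential new edges into those with tail set $\{v,w\}$ of non-$\{x,y\}$ vertices (at most one per pair, at most ${n-2\choose 2}$ total) and those with $y$ in the tail. As in Lemma~\ref{CaseA}, I would show that $x$ is isolated in $D_y$ (since $t(x,y)=1$, the only directed edge at $x$ in $D_y$ would be forced, and actually $xy\rightarrow z$ means $x\rightarrow z\in D_y$ — here one must handle the edge $x\rightarrow z$ of $D_y$ explicitly, which is the main structural difference from Lemma~\ref{CaseA}). The presence of $x\rightarrow z$ in $D_y$ means $D_y$, being $H_1$-free hence a star/triangle, must have its center at $x$ or $z$; combined with the edges $v\rightarrow w$ forced by any new $\{v,w\}\rightarrow y$, this sharply constrains how many tails can contain $y$. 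Then I would run the same double-count: assuming ${n-2\choose 2}+1$ new edges forces some pair $\{v,w\}$ to be reused, which kills $v$ (or $w$) from being in a tail with any of the remaining $n-4$ vertices, cutting the first-type budget down to ${n-2\choose 2}-(n-4)$ and forcing at least $n-3$ edges of the second type, which then clashes with the star structure of $D_y$ to produce a copy of $H_1$.

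The main obstacle I expect is the bookkeeping around the special vertex $z$: unlike in Lemma~\ref{CaseA}, the triple $\{x,y,z\}$ carries the edge $xy\rightarrow z$ rather than $xv\rightarrow y$, so (i) the triple $\{v,w,z\}$ or $\{x,z,v\}$ may admit new edges not present in the earlier argument, and (ii) $z$ behaves as a head in $D_y$, so the claim "$x$ is isolated in $D_y$" must be replaced by the weaker "$x$ has only the edge $x\rightarrow z$ in $D_y$," and every subsequent use of isolation must be re-examined. I would handle this by treating edges touching $z$ as a small finite list of cases checked directly (there are $O(n)$ such potential edges but only a bounded number of "shapes"), showing each either duplicates a triple, violates orientedness, or creates an $H_1$, and then noting that the bulk of the count proceeds exactly as before on the vertices away from $z$. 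The hypothesis $n\ge 6$ is used to guarantee there are enough "free" vertices $v$ to realize the forbidden $H_1$ configurations at each step.
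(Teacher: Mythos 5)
Your counting frame is sound, and after the pigeonhole step it actually lands exactly where the paper does: since a tail $\{v,y\}$ with $v\notin\{x,z\}$ can only point to $z$ (anything else makes $H_1$ with $xy\rightarrow z$) and a tail $\{v,z\}$ can only point to $x$ or $y$, the only pair of non-$\{x,y\}$ vertices that can carry two additional edges is a pair $\{i,z\}$, namely $iz\rightarrow x$ together with either $iy\rightarrow z$ or $yz\rightarrow i$; these are precisely the paper's two cases. The genuine gap is in your final step: the contradiction cannot come from ``the star structure of $D_y$.'' Every admissible new edge with $y$ in its tail lies on a triple $\{y,z,j\}$ (it is $jy\rightarrow z$ or $yz\rightarrow j$), so in $D_y$ it is an edge incident to $z$; together with the forced edge $x\rightarrow z$ such edges never contain two independent directed edges, no matter how many you take. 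Indeed the base configuration plus all $n-3$ edges $jy\rightarrow z$ is itself $H_1$-free, so ``at least $n-3$ edges of the second type'' produces no clash at $y$. (Relatedly, a new edge $vw\rightarrow y$ contributes nothing to $D_y$, since $y$ is its head, so your sentence about edges $v\rightarrow w$ being forced in $D_y$ by new edges $\{v,w\}\rightarrow y$ is off.)

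What actually closes the argument, and what your sketch is missing, is the interaction between the forced $y$-tail edges and the nearly complete family of edges whose tails avoid $\{x,y\}$: once the budget forces (almost) every surviving pair in $W=V(H)\setminus\{x,y,z\}$ and every surviving pair $\{j,z\}$ to be used, pointing at $x$ or $y$, the copies of $H_1$ appear centered at $j$ or at $z$, not at $y$ --- for instance $jy\rightarrow z$ against $ij\rightarrow x$, or $yz\rightarrow i$ against $jz\rightarrow x$, or $jz\rightarrow y$ against $iz\rightarrow x$. This is exactly the paper's two-case analysis: if $iz\rightarrow x$ and $iy\rightarrow z$ are both present, $D_i$ becomes a triangle, the pairs $\{i,j\}$ die, and one then shows the remaining $W$-pairs can point nowhere; if $iz\rightarrow x$ and $yz\rightarrow i$ are both present, the pairs $\{j,z\}$ die and the forced edges $jy\rightarrow z$ collide with the forced $ij\rightarrow x$. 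You would need to replace your $D_y$-based ending with this analysis; as stated, the last step of your plan fails.
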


\begin{figure}
	\centering
	\begin{tikzpicture}
		\filldraw[color=black,fill=blue!5] (0,-2) ellipse (1.5 and 1);
		\node [left] at (-1.5, -2) {$n-3$};
	
		\filldraw[black] (0,0) circle (1pt);
		\filldraw[black] (xyz polar cs:angle=240, radius=2) circle (1pt);
		\filldraw[black] (xyz polar cs:angle=250, radius=2) circle (1pt);
		\filldraw[black] (xyz polar cs:angle=260, radius=2) circle (1pt);
		\filldraw[black] (xyz polar cs:angle=290, radius=2) circle (1pt);
		\filldraw[black] (2.5,-1) circle (1pt);
		\filldraw[black] (1.75,0.75) circle (1pt);
		
		\draw[thick] (0,0)--(xyz polar cs:angle=240, radius=2);
		\draw[thick] (0,0)--(xyz polar cs:angle=250, radius=2);
		\draw[thick] (0,0)--(xyz polar cs:angle=260, radius=2);
		\draw[thick] (0,0)--(xyz polar cs:angle=290, radius=2);
		\draw[thick,->] (0.35,-1)--(2.5,-1);
		\draw[thick] (0,0)--(2.5,-1);
		\draw[thick,->] (1.25,-0.5)--(1.75,0.75);
		
		\node [above] at (0,0) {$x$};
		\node at (xyz polar cs:angle=275, radius=1.5) {$\cdots$};
		\node [right] at (2.5,-1) {$y$};
		\node [right] at (1.75,0.75) {$z$};
	\end{tikzpicture}
	\caption{The special case configuration discussed in Lemma~\ref{CaseB}. Here, $x$ joins with every vertex except $z$ to point to $y$ and then joins with $y$ to point to $z$.}
	\label{Bpic}
\end{figure}
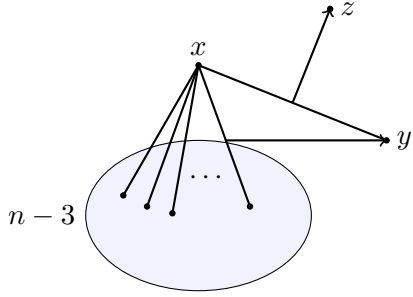

\begin{proof}
Let $W = \{1,2,\ldots,n-3\}$ be the set of non-$\{x,y,z\}$ vertices. Any additional edge to this graph must have a tail set of the form $\{i,j\}$, $\{i,y\}$, $\{i,z\}$, or $\{y,z\}$ for $i,j \in W$. An $ij$ tail can only point to $x$ or to $y$ and there are ${n-3 \choose 2}$ pairs like this possible. An $iy$ tail cannot point to $x$ because $H$ is oriented. It cannot point to $j$ since that would create a copy of $H_1$ with $xy \rightarrow z$. Therefore, it could only point to $z$. An $iz$ tail could not point to any $j$ since this would create a copy of $H_1$ with the edge $ix \rightarrow y$. Therefore, it could only point to $y$ or to $x$. And a $yz$ tail could not point to $x$ since $H$ is oriented. Therefore, it could only point to some $i$.

Assume, towards a contradiction, that we can add \[{n-1 \choose 2} + 1= {n-3 \choose 2} + (n-3)+ 1\] edges to the existing configuration. Since we can add at most ${n-3 \choose 2}$ edges with tail sets made entirely of vertices from $W$, then we must have at least $n-2$ additional edges from the other possibilities.

For each $i \in W$ we could have \[iy \rightarrow z, yz \rightarrow i, iz \rightarrow y, \text{and } iz \rightarrow x.\] The first three of these are mutually exclusive choices since they are all on the same triple. Similarly, the last two are mutually exclusive choices since we are only allowing up to one edge per possible tail set.

Therefore, in order to add $n-2$ of these types of edges, two must use the same element of $W$. Given the mutually exclusive choices above this implies that there is some vertex $i \in W$ such that either $iz \rightarrow x, yi \rightarrow z \in E(H)$ or $iz \rightarrow x, yz \rightarrow i \in E(H)$.

In the first case, $ij$ is no longer a possible tail for any edge for all $n-4$ remaining vertices $j \in W$. This is because $iz \rightarrow x$, $yi \rightarrow z$, and $ix \rightarrow y$ create a triangle in $D_i$. So any additional edge with $i$ in the tail would give two independent edges in $D_i$ and therefore a copy of $H_1$.

Hence, we can get at most ${n-3 \choose 2} - (n-4)$ edges with tails in $W$. This means that we will need $2(n-3)$ edges from the other possible edges to make up the difference if we want to add \[{n-3 \choose 2} + (n-3) + 1\] more edges.

Since each of the $n-3$ vertices from $W$ can be in up to two of these additional edges, then $iz \rightarrow x$ would need to be an edge for every $i \in W$ and that $\{y,z,i\}$ also needs to hold one edge for every $i \in W$.

If $yz \rightarrow i$ is used once, then we get a copy of $H_1$ with $jz \rightarrow x$ for some other $j \in W$. Therefore, for all $i \in W$ we must have the edges $iy \rightarrow z$ and $iz \rightarrow x$. However, any pair $i,j \in W$ can now point to nothing since the only possibilities for such a tail were $x$ or $y$ to begin with and both of these options create copies of $H_1$. So in this case the most that we can add is \[2(n-3) \leq {n-3 \choose 2} + (n-3)\] for all $n \geq 6$.

In the other case we have added $iz \rightarrow x$ and $yz \rightarrow i$ for some $i$. Which means that $yz \rightarrow j$ is not allowed for any $j \neq i$ from $W$. Also, $jz \rightarrow y$ would make a copy of $H_1$ with $iz \rightarrow x$ and $jz \rightarrow x$ would make a copy of $H_1$ with $yz \rightarrow i$. Therefore, for all $j \neq i$ we can only add the edge $jy \rightarrow z$.

In order to add ${n-3 \choose 2} + n-2$ edges, we'll need all of these as well as all possible edges with tails in $W$. However, since $iz \rightarrow x$, all of these edges with tails completely in $W$ must also point to $x$. Otherwise, some pair $ab$ would point to $y$. If $a=i$ or $b=i$, then this would make a copy of $H_1$ with $iz \rightarrow x$. If $i \neq a,b$, then consider where the pair $ai$ points. It must either point to $x$ or to $y$, but either of these would create a copy of $H_1$.

So all pairs of $W$ must point to $x$ and for all $j \in W$ not equal to $i$ we must have the edge $jy \rightarrow z$. But $jy \rightarrow z$ and $ij \rightarrow x$ create a copy of $H_1$, a contradiction. Hence, it is not possible to add more than ${n-3 \choose 2} + (n-3)$ edges to the configuration. Since the configuration already has $n-2$ edges, then there can be no more than ${n-1 \choose 2}$ edges total.
\end{proof}

Together these two lemmas take care of the cases where all pairs of vertices point to at most one vertex in $H$.

\subsection{At least one pair of vertices is the tail set to more than one edge of $H$ ($k > 0$)}

So let's return to our description of an $H_1$-free oriented graph as being made up of $k \geq 1$ vertex pairs that each serve as tails to strictly more than one edge plus a set $R$ of the remaining $n-2k$ vertices, \[V(H) = \{x_1,y_1,\ldots,x_k,y_k\} \cup R\] (see Figure~\ref{twopluspairs}). For each pair $\{x_i,y_i\}$ we want to prove the following upper bound, \[t(x_i,y_i) + \sum_{v \neq x_i,y_i} \left(t(x_i,v)+t(y_i,v)\right) \leq n-2.\] That is, the total number of edges that include either $x_i$ or $y_i$ or both in the tail set is at most $n-2$.

Now, \[|E(H)| = \sum_{\{x,y\} \in {V(H) \choose 2}} t(x,y) \leq \sum_{i=1}^k \left(t(x_i,y_i) + \sum_{v \neq x_i,y_i} \left(t(x_i,v)+t(y_i,v)\right) \right) + \sum_{\{x,y\} \in {R \choose 2}} t(x,y).\] Note that each pair of vertices in $R$ act as a tail set at most once so \[\sum_{\{x,y\} \in {R \choose 2}} t(x,y) \leq {n-2k \choose 2}.\] Therefore, proving the upper bound for each $\{x_i,y_i\}$ pair would imply that \[|E(H)| \leq k(n-2) + {n-2k \choose 2}.\] Since \[k(n-2) + {n-2k \choose 2} = 2k^2 - (n+1)k + {n \choose 2}\] is a quadratic polynomial with positive leading coefficient in terms of $k$, then it is maximized at the endpoints. Here, that means at $k=1$ and at $k= \left\lfloor\frac{n}{2}\right\rfloor$.

When $n$ is odd, both of these values for $k$ give the upper bound, \[|E(H)| \leq {n-1 \choose 2}.\] Only when $n$ is even can we do better and get \[|E(H)| \leq \frac{n(n-2)}{2}\] in the case where $k=\frac{n}{2}$. In either case this give an upper bound of \[|E(H)| \leq \left\lfloor \frac{n}{2} \right\rfloor (n-2).\]

So we need only prove that, in general,  \[t(x_i,y_i) + \sum_{v \neq x_i,y_i} \left(t(x_i,v)+t(y_i,v)\right) \leq n-2.\] This is straightforward to show if $t(x_i,y_i) \geq 3$. However, when $t(x_i,y_i)=2$ there is a case where it fails to hold. This is taken care of in the following lemma.

\begin{lemma}
\label{CaseC}
Let $H$ be an oriented graph on $n \geq 6$ vertices. If $H$ is $H_1$-free and contains vertices $x$, $y$, $a$, and $b$ such that $\{x,y\}$ is the tail set to exactly 2 edges with \[xy \rightarrow a, xy \rightarrow b, yb \rightarrow a  \in E(H),\] and for each $v \in V(H) \setminus \{x,y,a,b\}$, $xv \rightarrow y$ (see Figure~\ref{CaseCpic}), then \[|E(H)| \leq {n-1 \choose 2}.\]
\end{lemma}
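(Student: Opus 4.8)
<br>

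The plan is to follow the template of Lemmas~\ref{CaseA} and \ref{CaseB}: exhibit the fixed edges the configuration forces, bound how many further edges can be added, and push all the work into a by-contradiction count. Write $W = V(H) \setminus \{x,y,a,b\}$, so $|W| = n-4$. The hypothesized edges $xy \to a$, $xy \to b$, $yb \to a$, together with the $n-4$ edges $xv \to y$ for $v \in W$, already number $n-1$. Since $(n-1) + \left({n-2 \choose 2} - 1\right) = {n-1 \choose 2}$, it suffices to show that $H$ has at most ${n-2 \choose 2} - 1$ edges beyond these.

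The first step is to confine the tails of all further edges to $W \cup \{a,b\}$. The directed link graph $D_x$ already contains the $n-2$ edges $y \to a$, $y \to b$, and $v \to y$ $(v \in W)$, all incident to $y$; since $n \geq 6$ this is a spanning star rather than a triangle, so --- using that $t(x,y)=2$ is fixed by hypothesis and that every triple through both $x$ and $y$ already carries an edge --- no new edge may contain $x$ in its tail. Similarly $D_y$ already contains the triangle $x \to a$, $x \to b$, $b \to a$ on $\{x,a,b\}$; a new edge with $y$ in its tail would either introduce a vertex of $W$ into $D_y$, producing with a triangle edge missing that vertex a pair of vertex-disjoint edges and hence a copy of $H_1$, or would fall on a triple inside $\{x,y,a,b\}$ that is already full. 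So every new edge has its tail set among the ${n-2 \choose 2}$ pairs inside $W \cup \{a,b\}$, and it remains to rule out using all of them.

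The leverage for that comes from the edges already sitting in the link graphs of the vertices of $W \cup \{a,b\}$: each $D_i$ $(i \in W)$ contains $x \to y$, and $D_b$ contains $y \to a$, while $D_a$ starts empty. Each directed link graph is empty, a triangle, or a star, so the pre-existing edge forces $D_i$ to be a star centered at $x$ or at $y$ (or a very restricted triangle through $x \to y$) and $D_b$ to be a star through $y$ or through $a$ (or a restricted triangle through $y \to a$). Consequently all new edges with a fixed $i \in W$ in the tail share a single head, and this ``type'' (head $x$ versus head $y$) propagates along the auxiliary graph on $W \cup \{a,b\}$ whose edges are the used tail pairs. I would then run through the cases, organized by the shapes of $D_a$ and $D_b$ and by the type partition of $W$, and check each time that a whole family of pairs (those joining the two types, or a block of pairs meeting $a$ or $b$) goes unused, so that in every case at least one pair is lost. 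Several of the needed forbidden configurations are ``five-vertex'' copies of $H_1$: for instance $\{i,b\} \to a$ together with the existing $\{i,x\} \to y$ has the five distinct vertices $i,b,a,x,y$ and common tail vertex $i$, so $\{i,b\}$ can never point to $a$; identities of this shape are what keep the case split finite.

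I expect the bookkeeping in that last step to be the main obstacle. The structural facts --- the star/triangle dichotomy for the link graphs and the propagation of types --- are immediate, but there are genuinely several configurations for the pair of shapes $(D_a,D_b)$ combined with how the two types are distributed over $W$, and in each one must count the forbidden tail pairs precisely and confirm that at least one is missing. The small-$n$ boundary, where $|W|$ is only $2$ or $3$, will need separate checking so the counting inequalities (e.g. bounds of the form ${n-2 \choose 2}-(n-4)$) do not degenerate, exactly as the hypothesis $n \geq 6$ signals.
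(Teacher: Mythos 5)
Your setup is correct and coincides with how the paper begins: the forced configuration accounts for $n-1$ edges, your link-graph arguments that no further edge can have $x$ or $y$ in its tail are valid (the paper reaches the same conclusion by checking the possible heads pair by pair), and the numerical target — at most ${n-2 \choose 2}-1$ additional edges, all with tails inside $W \cup \{a,b\}$ — is exactly the right one, since the paper's bound $2(n-4)+{n-4 \choose 2}$ meets it with equality. But the proof stops where the real work starts: the entire mutual-exclusion count is deferred to an unexecuted case analysis that you yourself flag as ``the main obstacle,'' so as it stands this is a plan, not a proof.

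Moreover, the reduction you propose for that last step — show that at least one tail pair in $W \cup \{a,b\}$ goes unused — does not bound the number of edges, because a single pair can carry \emph{two} additional edges: a pair $\{i,j\} \subseteq W$ may point to both $x$ and $y$, and likewise $\{a,i\}$ may point to both $x$ and $y$ (these lie on different triples, and neither combination forms $H_1$ with the base configuration, since the shared vertices are not tails of both edges). Your claim that all new edges with a fixed $i \in W$ in the tail share a single head fails in precisely this situation (the ``restricted triangle'' you mention parenthetically), and the type-propagation picture breaks with it; counting unused pairs then only yields a bound of $2\left({n-2 \choose 2}-1\right)$ in the worst case. A correct completion must count edges rather than pairs, which is what the paper does: (i) of the $4(n-4)+1$ candidate edges having $a$ or $b$ in the tail ($ai \rightarrow x$, $ai \rightarrow y$, $bi \rightarrow y$, $ab \rightarrow i$, $ab \rightarrow x$), at most $2(n-4)$ can occur, shown by a short contradiction argument built on mutually exclusive choices; and (ii) edges with tails inside $W$ number at most ${n-4 \choose 2}$ even allowing doubly-pointing pairs, because such a pair excludes both its vertices from every other tail, giving $2l + {n-4-2l \choose 2} \leq {n-4 \choose 2}$. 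These two quantitative facts are where all the difficulty of the lemma lives, and they are exactly what your sketch leaves open.
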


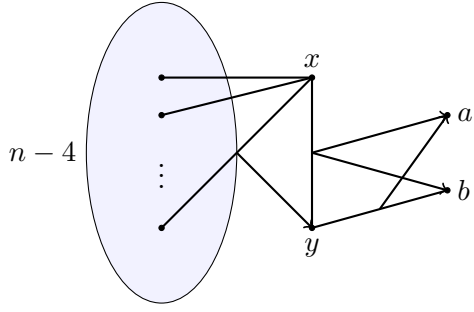
\begin{figure}
	\centering
	\begin{tikzpicture}
	\filldraw[color=black,fill=blue!5] (-2,1) ellipse (1 and 2);
	\filldraw[black] (-2,2) circle (1pt);
	\filldraw[black] (-2,1.5) circle (1pt);
	\filldraw[black] (-2,0) circle (1pt);
	
	\node [left] at (-3,1) {$n-4$};
	\node at (-2,0.8) {$\vdots$};
	
	\draw[thick] (0,2) -- (-2,2);
	\draw[thick] (0,2) -- (-2,1.5);
	\draw[thick] (0,2) -- (-2,0);
	\draw[thick,->] (-1,1) -- (0,0);
	
	\filldraw[black] (0,2) circle (1pt);
	\filldraw[black] (0,0) circle (1pt);
	\filldraw[black] (1.8,1.5) circle (1pt);
	\filldraw[black] (1.8,0.5) circle (1pt);
	
	\node [above] at (0,2) {$x$};
	\node [below] at (0,0) {$y$};
	\node [right] at (1.8,1.5) {$a$};
	\node [right] at (1.8,0.5) {$b$};
	
	\draw[thick] (0,2) -- (0,0);
	\draw[thick, ->] (0,1) -- (1.8,1.5);
	\draw[thick, ->] (0,1) -- (1.8,0.5);
	\draw[thick] (0,0) -- (1.8,0.5);
	\draw[thick, ->] (0.9,0.25) -- (1.8,1.5);
	\end{tikzpicture}
	\caption{An $H_1$-free graph containing this configuration with have at most ${n-1 \choose 2}$ edges as shown in Lemma~\ref{CaseC}.}
	\label{CaseCpic}
\end{figure}

\begin{proof}
First consider which pairs of vertices could possibly be a tail set to an edge in this graph. Let $W=\{1,\ldots,n-4\}$ be the set of vertices other than $\{x,y,a,b\}$. Then $\{i,j\}$ can be a tail set to $ij \rightarrow x$ and $ij \rightarrow y$ for any pair $i,j \in W$. Since $xy \rightarrow a$, then $xi$ can point to nothing other than $y$. Similarly, $xa$ and $xb$ could only possibly point to $b$ and $a$ respectively, but either would create a copy of $H_1$ with $xi \rightarrow y$ for any $i \in W$. Also, by assumption $xy$ can point to nothing else. Hence, $x$ is in no additional tail sets.

Since $yb \rightarrow a$ and $xy \rightarrow a$, then $ya$ cannot point to $b$ or to $x$. It can also not point to any $i \in W$ since this would create a copy of $H_1$ with $xy \rightarrow b$. So $y$ can be in no additional tails. The pair $ab$ can point to anything aside from $y$ since $H$ is oriented, and $ai$ can point to $x$ or $y$ for any $i \in W$ but not to $b$ or another element of $W$ since either would create a copy of $H_1$ with $xi \rightarrow y$. Similarly, $bi$ can point to $y$ for each $i \in W$ but not to $x$ or to $a$ or to another element of $W$ since these would create a copy of $H_1$ with either $yb \rightarrow a$ or $xi \rightarrow y$.

Leaving aside the edges with tail sets completely in $W$ for the moment, this means there are $4(n-4) + 1$ possible edges remaining. There are $n-4$ each of types $ai \rightarrow x$, $ai \rightarrow y$, $bi \rightarrow y$, and $ab \rightarrow i$ plus one extra edge which is $ab \rightarrow x$.

Suppose we are able to use at least $2(n-4)+1$ of these edges. First, if one of them is $ab \rightarrow x$, then there could be none of the types $ai \rightarrow y$ or $bi \rightarrow y$. So all of the ones of type $ab \rightarrow i$ and $ai \rightarrow x$ would need to be used. But since $n \geq 6$ there are at least two vertices in $W$. So there would exist edges $ai \rightarrow x$ and $ab \rightarrow j$ with $i \neq j$, a copy of $H_1$. Therefore $ab \rightarrow x$ cannot be used if we want to get more than $2(n-4)$ of these edges.

Hence, we need to use at least three types of edges from the four possible types. Since any of the types $ai \rightarrow x$, $ai \rightarrow y$, and $bi \rightarrow y$ eliminate the possibility of using any edge $ab \rightarrow j$ where $j \neq i$, then we can use at most one of this last type of edge. But since $n \geq 6$, then $2(n-4)+1 \geq 5$ which means one of the other types gets used at least twice. Regardless of which one it is, there can be nothing used from the $ab \rightarrow i$ types of edges.

Therefore, we must use $2(n-4)+1$ edges from only the first three types. So there must be a vertex from $W$ that belongs to three of these edges, say \[ai \rightarrow y, bi \rightarrow y, \text{and } ai \rightarrow x.\] Then for any $j \neq i$, $aj \rightarrow x$ creates a copy of $H_1$ with $ai \rightarrow y$ and $aj \rightarrow x$ creates a copy of $H_1$ with $ai \rightarrow x$. So at most $2 + (n-4) < 2(n-4)$ edges could be used. Thus, at most $2(n-4)$ of these kinds of edges can be used over all.

Now let us look at the edges with tail sets contained in $W$. We have seen that each $ij$ can point to $x$ or to $y$, but nothing so far has kept the pair from pointing to both. However, if some pair does point to both, then no other tail could use either of these vertices since this would create a copy of $H_1$. Therefore, if there are $l$ such pairs, then there are at most $2l + {n - 4 - 2l \choose 2}$ edges with tails from $W$. But since $n \geq 5$, then $\frac{n+5}{2} \leq n$. And since $l \leq \frac{n-4}{2}$, then \[2l + {n-4-2l \choose 2} \leq {n-4 \choose 2}.\] So there are at most ${n-1 \choose 2}$ edges in $H$.
\end{proof}

\subsection{First main result, $\text{ex}_o(n,H_1) = \left\lfloor \frac{n}{2} \right\rfloor (n-2)$.}

Now we can proceed with establishing the upper bound under the assumption that the configuration presented in Lemma~\ref{CaseC} does not occur in our directed hypergraph. As we've seen, all that's necessary to show is that \[t(x_i,y_i) + \sum_{v \neq x_i,y_i} \left(t(x_i,v)+t(y_i,v)\right) \leq n-2\] for any pair of vertices $\{x_i,y_i\}$ that serves as the tail set to at least two edges.

So let $\{x,y\}$ be such a pair, and divide the rest of the vertices of $H$ into two groups, those that are a head vertex to some edge with $xy$ as the tail and those that are not. That is, \[V(H) \setminus \{x,y\} = \{h_1,\ldots,h_m\} \cup \{n_1,\ldots,n_t\}\] where for each $i=1,\ldots,m$, there exists an edge, $xy \rightarrow h_i \in E(H)$ and for each $j=1,\ldots,t$, $xy \rightarrow n_j \not \in E(H)$ (note that $t(x,y)=m$ and that $m+t=n-2$).

Now, consider an edge that contains either $x$ or $y$ in the tail but not both. Then the other tail vertex is either some $h_i$ or some $n_j$. In the case of $n_j$, this edge must either be of the form $xn_j \rightarrow y$ or $yn_j \rightarrow x$ to avoid a copy of $H_1$ with both $xy \rightarrow h_1$ and $xy \rightarrow h_2$. Moreover, since $H$ is oriented, there can be at most one. Hence, \[\sum_{j=1}^t \left(t(x,n_j)+t(y,n_j)\right) \leq t.\]

Now consider a tail set that includes either $x$ or $y$ and some $h_i$. Without loss of generality, assume that $xh_1$ is the tail to some edge. Since $t(x,y) \geq 2$, then there is some other vertex $h_2$ such that $xy \rightarrow h_2 \in E(H)$. In order to avoid a copy of $H_1$ with this edge, $xh_1$ must either point to $y$ or to $h_2$. However, $xh_1 \rightarrow y \not \in E(H)$ since this would give the triple $\{x,y,h_1\}$ more than one edge.

Therefore, $xh_1 \rightarrow h_2$ is the only option. However, if $t(x,y) \geq 3$, then this will create a copy of $H_1$ with $xy \rightarrow h_3$. So $xh_i$ and $yh_i$ cannot be tails to any edge. So \[\sum_{i=1}^m \left(t(x,h_i)+t(y,h_i)\right) = 0.\] Therefore,
\begin{align*}
&t(x,y) + \sum_{v \neq x,y} \left(t(x,v)+t(y,v)\right)\\
&= m + \sum_{j=1}^t \left(t(x,n_j)+t(y,n_j)\right) + \sum_{i=1}^m \left(t(x,h_i)+t(y,h_i)\right)\\
&\leq m+t\\
&=n-2
\end{align*}
when $t(x,y) \geq 3$.

The only other possibility is that $t(x,y)=2$. So suppose this is the case and that the head vertices to $xy$ are $a$ and $b$. Without loss of generality, assume that $yb \rightarrow a \in E(H)$. Note that this precludes any edges of the form $yn_j \rightarrow x$. Similarly, if we added the edge $xa \rightarrow b$ or the edge $xb \rightarrow a$, then we could not add any edges of the form $xn_j \rightarrow y$ and so \[\sum_{j=1}^t \left(t(x,n_j)+t(y,n_j)\right) = 0.\] Moreover, $ya \rightarrow b$ would lead to more than one edge on the triple $\{y,a,b\}$. So \[\sum_{i=1}^m \left(t(x,h_i)+t(y,h_i)\right) = 2\] and total we would have, \[t(x,y) + \sum_{v \neq x,y} \left(t(x,v)+t(y,v)\right) = 4 \leq n-2.\]

On the other hand, if $xa$ and $xb$ are not tails to any edge, then the only way we could get a sum more than $n-2$ is if $xn_j \rightarrow y \in E(H)$ for all $j=1,\ldots, n-4$. But this is exactly the configuration described in Lemma~\ref{CaseC} which we have excluded.

Therefore, \[t(x,y) + \sum_{v \neq x,y} \left(t(x,v)+t(y,v)\right) \leq n-2\] for any such pair, and this is enough to establish that \[\text{ex}_o(n,H_1) \leq \left\lfloor \frac{n}{2} \right\rfloor (n-2).\]

Conversely, we have already considered an extremal construction in the case where $n$ is even, and this same construction will work when $n$ is odd. That is, take a maximum matching of the vertices (leaving one out) and then use each matched pair as the tail set for all $n-2$ possible edges.

Another construction that works for odd $n$ that is not extremal for even $n$ is to designate one vertex as the only head vertex and then make all ${n-1 \choose 2}$ pairs of the rest of the vertices tail sets.

Therefore, \[\text{ex}_o(n,H_1) = \left\lfloor \frac{n}{2} \right\rfloor (n-2).\]

Also, note that the only way that any construction could have more than ${n-1 \choose 2}$ edges is if $n$ is even \emph{and} the vertices are partitioned into $\frac{n}{2}$ pairs such that each points to at least two other vertices. This fact comes directly from the requirement that $k=\frac{n}{2}$ in the optimization of \[k(n-2) + {n-2k \choose 2}\] in order for the expression to be more than ${n-1 \choose 2}$.

\subsection{Intersections of multiedge triples in the standard version}

Now, let $H$ be an $H_1$-free graph on $n$ vertices under the standard version of the problem so that any triple of vertices can now have up to all three possible directed edges. If we let $t_H$ be the number of triples of vertices of $H$ that hold at least one edge, and we let $m_H$ be the number of triples that hold at least two, then we have the following simple observation: \[|E(H)| \leq t_H + 2m_H.\]

We start our path towards an upper bound on $|E(H)|$ by finding an upper bound on the number of multiedge triples, $m_H$. We will need to prove some facts about the multiedge triples of $H$. First, any triple which holds two edges of $H$ might as well hold three.

\begin{lemma}
\label{more}
Let $H$ be an $H_1$-free graph such that some triple of vertices $\{x,y,z\}$ contains two edges. Define $H'$ by $V(H') = V(H)$ and \[E(H') = E(H) \cup \{xy \rightarrow z,xz \rightarrow y,yz \rightarrow x\}.\]Then $H'$ is also $H_1$-free.
\end{lemma}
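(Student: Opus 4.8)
The plan is to show that adding the two missing edges to a multiedge triple $\{x,y,z\}$ cannot create a copy of $H_1$, by analysing what such a copy would have to look like. Suppose $H$ already contains, say, $xy \rightarrow z$ and $xz \rightarrow y$ (any two of the three triple-edges will do by symmetry), and suppose for contradiction that $H'$ contains a copy of $H_1$ but $H$ does not. Then the copy of $H_1$ must use at least one of the newly added edges; recall $H_1$ consists of two edges $\alpha\sigma \rightarrow \beta$ and $\gamma\sigma \rightarrow \delta$ sharing exactly the single tail vertex $\sigma$, with the underlying vertex set $\{\alpha,\beta,\gamma,\delta,\sigma\}$ of size five. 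The two edges of this copy lie on two distinct triples, each of which contains $\sigma$, so at most one of the two edges lies on the triple $\{x,y,z\}$; hence exactly one edge of the copy is new and the other edge, call it $e$, already belongs to $H$.

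The key structural point is that the shared tail vertex $\sigma$ lies in $\{x,y,z\}$, since it is a tail vertex of the new edge. First I would record which vertex of $\{x,y,z\}$ can serve as $\sigma$: a new edge on $\{x,y,z\}$ is one of $xy \rightarrow z$, $xz \rightarrow y$, $yz \rightarrow x$ (whichever two were not already present), so $\sigma$ is one of the two tail vertices of that new edge. Now I use that $H$ \emph{already} contains two edges on $\{x,y,z\}$, in particular it contains an edge in which $\sigma$ is a \emph{tail} together with another vertex of $\{x,y,z\}$ pointing to the third. The idea is then: the old edge $e$ of the $H_1$-copy has $\sigma$ as a tail and some head outside $\{x,y,z\}$ (it cannot be a triple-edge of $\{x,y,z\}$, as noted). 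Pair $e$ with an appropriate one of the two pre-existing edges of $\{x,y,z\}$ — chosen so that it shares exactly the tail vertex $\sigma$ with $e$ and has its head inside $\{x,y,z\}$, hence disjoint from the head of $e$ — and check that these two old edges of $H$ form a copy of $H_1$, contradicting that $H$ is $H_1$-free. One must verify the intersection pattern is exactly ``one common tail, heads distinct, all five vertices distinct''; this uses that the head of $e$ is outside $\{x,y,z\}$ and that $e$'s second tail vertex is also outside $\{x,y,z\}$ (otherwise $e$ itself would be a triple-edge of $\{x,y,z\}$).

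I would organize this as a short case check over which single new edge is used and which of its two tail endpoints plays the role of $\sigma$; in every case one of the two original edges on $\{x,y,z\}$ supplies the needed partner edge already in $H$. The main obstacle — and the only place needing care — is the subcase where the head of the old edge $e$ of the purported $H_1$-copy happens to be a vertex of $\{x,y,z\}$: then $e$ is not a triple-edge only if its other tail is outside, and one must check this does not secretly produce an $H_1$ already inside $H$ using a triple-edge of $\{x,y,z\}$ together with $e$. Handling that degenerate overlap carefully, and confirming the five vertices of the reconstructed $H_1$ are genuinely distinct, is where the real content lies; the rest is bookkeeping over the symmetric cases.
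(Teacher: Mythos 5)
Your proposal follows essentially the same route as the paper: since $H$ already has two edges on $\{x,y,z\}$, $H'$ adds at most one edge, any copy of $H_1$ in $H'$ must use that new edge with the shared tail lying in the new edge's tail set, and pairing the copy's old edge with the pre-existing triple edge that shares that tail vertex produces a copy of $H_1$ already in $H$, a contradiction. The ``degenerate overlap'' you flag at the end is vacuous rather than the real content: a copy of $H_1$ is an injective homomorphism, so its five vertices are distinct and three of them are exactly $x,y,z$ (the vertices of the new edge), which forces both the head and the second tail of the old edge $e$ to lie outside $\{x,y,z\}$ --- precisely the distinctness you were worried about, with no further case analysis needed.
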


\begin{proof}
Suppose $H'$ is not $H_1$-free. Since $H$ is $H_1$-free and the two graphs differ by at most one edge, then they must differ by exactly one edge. Without loss of generality, say \[\{xy \rightarrow z\} = E(H') \setminus E(H).\] This edge must be responsible for creating the copy of $H_1$ in $H'$. So it must intersect another edge in exactly one vertex that is in the tail set of both.

Therefore, without loss of generality, there is an edge $xt \rightarrow s \in H$ where $\{s,t\} \cap \{y,z\} = \emptyset$. However, since $\{x,y,z\}$ already contained two edges of $H$, then $xz \rightarrow y \in H$. Since $xt \rightarrow s$ and $xz \rightarrow y$ make a copy of $H_1$, then $H$ cannot be $H_1$-free, a contradiction.
\end{proof}

Next, we want to show that no two multiedge triples can intersect in exactly one vertex.

\begin{lemma}
\label{multilimit}
Let $H$ be a $H_1$-free graph. If two vertex triples $\{x,y,z\}$ and $\{s,t,r\}$ each contain two or more edges of $H$, then \[|\{x,y,z\} \cap \{s,t,r\}| \neq 1.\]
\end{lemma}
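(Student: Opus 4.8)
The plan is to reduce to the case where both triples are ``saturated'' and then simply read off an explicit copy of $H_1$. First I would invoke Lemma~\ref{more}: since $\{x,y,z\}$ contains two edges, adding all three directed edges on $\{x,y,z\}$ keeps the graph $H_1$-free, and applying the same lemma again to $\{s,t,r\}$ (which still carries at least two edges in the enlarged graph) lets me assume without loss of generality that $H$ contains all three directed edges on each of $\{x,y,z\}$ and $\{s,t,r\}$. Now suppose toward a contradiction that $|\{x,y,z\}\cap\{s,t,r\}| = 1$, and let $v$ be the unique common vertex. Then the two triples can be written as $\{v,a,b\}$ and $\{v,c,d\}$, where the five vertices $v,a,b,c,d$ are pairwise distinct (each triple has three distinct vertices and their union has size $5$).

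Since $H$ now contains every directed edge on $\{v,a,b\}$, in particular $av \to b \in E(H)$; likewise $cv \to d \in E(H)$. These two edges have underlying $3$-sets $\{v,a,b\}$ and $\{v,c,d\}$, which meet in exactly the single vertex $v$, and $v$ lies in the tail set of both. Recalling that $E(H_1) = \{ax \to b,\ cx \to d\}$, the assignment $a \mapsto a$, $x \mapsto v$, $b \mapsto b$, $c \mapsto c$, $d \mapsto d$ is then an injective homomorphism $H_1 \to H$, contradicting that $H$ is $H_1$-free. Hence the intersection cannot have size exactly $1$, which is the claim.

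There is essentially no hard step here; the one point worth a moment's care is that one really does obtain an edge with $v$ in the tail set from each triple. This is immediate once Lemma~\ref{more} has been applied, since the triples then carry all three edges. Even without that reduction, though, one can observe that of the three possible directed edges on a fixed $3$-set exactly one has $v$ as its head, so any triple carrying two or more edges must carry at least one edge with $v$ in its tail --- and that is all the argument needs.
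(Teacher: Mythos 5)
Your proposal is correct and follows essentially the same route as the paper: apply Lemma~\ref{more} to saturate both triples with all three directed edges, then read off two edges sharing only the common vertex $v$, with $v$ in the tail of each, giving a copy of $H_1$. The only difference is cosmetic (you justify iterating Lemma~\ref{more} and note the reduction is not strictly needed), so nothing further is required.
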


\begin{proof}
Suppose \[|\{x,y,z\} \cap \{s,t,r\}| = 1\]By Lemma~\ref{more}, since $H$ is $H_1$-free, the graph created from $H$ by adding all three possible edges on the triples $\{x,y,z\}$ and $\{s,t,r\}$ is also $H_1$-free. But if $x=r$ and $x$, $y$, $z$, $s$, and $t$ are all distinct, then this graph contains $xy \rightarrow z$ and $xs \rightarrow t$ which is a copy of $H_1$, a contradiction.
\end{proof}

Therefore, we can use an upper bound on the number of undirected 3-uniform hyperedges such that no two intersect in exactly one vertex as an upper bound on the number of multiedge triples. Moreover, the extremal examples are easy to describe which will be important for finding the upper bound for $\text{ex}(n,H_1)$ as well as for establishing the uniqueness of the lower bound construction.

\begin{lemma}
\label{undirected}
Let $H$ be a 3-uniform undirected hypergraph on $n$ vertices such that no two edges intersect in exactly one vertex, then \[|E(H)| \leq 
\begin{cases}
n & n \equiv 0 \text{ mod } 4\\
n-1 & n \equiv 1 \text{ mod } 4\\
n-2 & n \equiv 2,3 \text{ mod } 4\\
\end{cases}\]and $H$ is the disjoint union of $K_4^{(3)}$s, $K_4^{(3)}$s minus an edge ($K_4^{-}$), and sunflowers with a common intersection of two vertices.
\end{lemma}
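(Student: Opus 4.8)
The plan is to analyze $H$ one connected component at a time and then optimize over how the $n$ vertices are distributed among components. The heart of the argument is the following structural claim: if $H$ is $3$-uniform and no two of its edges meet in exactly one vertex, then every connected component of $H$ that contains at least one edge is either a sunflower with a two-element core (allowing the degenerate case of a single edge), a copy of $K_4^{-}$, or a copy of $K_4^{(3)}$; the remaining components are isolated vertices.

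To prove this, fix a component $C$ with at least two edges and note that any two edges of $C$ sharing a vertex must share exactly two. Choose $e_1 = \{a,b,c\}$ and $e_2 = \{a,b,d\}$ sharing the pair $\{a,b\}$. If every edge of $C$ contains both $a$ and $b$, then $C$ is a sunflower with core $\{a,b\}$ and we are done. Otherwise let $S$ be the set of edges of $C$ containing $\{a,b\}$; by connectedness some edge $g \notin S$ shares a pair with a member $\{a,b,w_0\}$ of $S$, and since $g$ omits one of $a,b$ that shared pair is, say, $\{a,w_0\}$, so $g = \{a,w_0,z\}$ with $b \notin g$. Because $g$ meets every $\{a,b,w\} \in S$ in at least the vertex $a$, it must meet it in two vertices, and as $b \notin g$ this forces the petal $w$ to lie in $\{w_0,z\}$; since $S$ already has the distinct petals $c$ and $d$, we get $S = \{\{a,b,c\},\{a,b,d\}\}$ and $g = \{a,c,d\}$, so $C$ contains the three edges of a $K_4^{-}$ on $\{a,b,c,d\}$. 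One then checks directly that the family $\{\{a,b,c\},\{a,b,d\},\{a,c,d\},\{b,c,d\}\}$ is closed under ``shares two vertices with'', which by connectedness forces $E(C)$ to be contained in it; hence $C$ is $K_4^{-}$ or $K_4^{(3)}$. I expect this case analysis — tracking which pairs a new edge may share with the edges already identified — to be the main obstacle, though each individual case is short.

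With the structure in hand, assign to each component its defect, the number of its vertices minus the number of its edges: this is $0$ for $K_4^{(3)}$, $1$ for $K_4^{-}$, $1$ for an isolated vertex, and $2$ for a sunflower with $p \geq 1$ petals (which has $p+2$ vertices and $p$ edges). Summing over components gives $|E(H)| = n - \Delta$ with $\Delta$ the total defect, so it remains to minimize $\Delta$ as a function of $n \bmod 4$. Since a $K_4^{(3)}$ uses four vertices and has defect zero, for $n$ large any collection of non-$K_4^{(3)}$ components whose vertex total is congruent to $n$ modulo $4$ can be padded out to exactly $n$ vertices by adjoining copies of $K_4^{(3)}$; hence $\Delta$ equals the smallest defect of such a collection. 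Inspecting the possible vertex counts modulo $4$ of the defect-one blocks ($0$ for $K_4^{-}$, $1$ for an isolated vertex) and of the defect-two block (a sunflower, whose $p+2$ vertices realize every residue) shows $\Delta = 0$ for $n \equiv 0$, $\Delta = 1$ for $n \equiv 1$ (one isolated vertex suffices and nothing of smaller defect has the right parity), and $\Delta = 2$ for $n \equiv 2$ (two isolated vertices) and for $n \equiv 3 \pmod{4}$ (a single edge). These values give exactly the stated bounds, and the structural claim gives the asserted description of $H$.
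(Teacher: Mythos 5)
Your proposal is correct and follows essentially the same route as the paper: classify the connected components as sunflowers with a two-vertex core, $K_4^{-}$, or $K_4^{(3)}$, and then optimize the edge count over the residue of $n$ modulo $4$. Your core-pair dichotomy (all edges through $\{a,b\}$ versus a deviating edge forcing $K_4^{-}$ and then closure) and your vertices-minus-edges ``defect'' bookkeeping are just tidier packagings of the paper's edge-by-edge component analysis and its case count, not a different argument.
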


\begin{proof}
Two edges of $H$ are either disjoint or they intersect in two vertices. So connected components of $H$ that have 1 or 2 edges are both sunflowers. A third edge can be added to a two-edge sunflower by either using the two common vertices to overlap with both edges in two or by using one common vertex and the two petal vertices. So a connected component of $H$ with 3 edges is either a sunflower or a $K_4^{-}$.

The only way to connect a fourth edge to the three-edge sunflower is to make a four-edge sunflower, and this is true for a $k$-edge sunflower to a $(k+1)$-edge sunflower for all $k \geq 3$. The only way to add a fourth edge to the $K_4^{-}$ is to make a $K_4^{(3)}$ and then no new edges may be connected to a $K_4^{(3)}$ without intersecting two of its edges in exactly one vertex each. Therefore, these are the only possible connected components of $H$.

A sunflower with $k$ edges uses $k+2$ vertices, and a $K_4^{(3)}$ has four edges on 4 vertices. Therefore, if $n \equiv 0 \text{ mod } 4$ we can get at most $n$ edges with a disjoint collection of $K_4^{(3)}$s. Similarly, the best we can do when $n \equiv 1 \text{ mod } 4$ is $n-1$ edges with a disjoint collection of $K_4^{(3)}$s plus one isolated vertex since any sunflower will automatically limit the number of edges to $n-2$. And if $n \equiv 2 \text{ mod } 4$ or $n \equiv 3 \text{ mod } 4$, then $n-2$ is the best that we can do.
\end{proof}

In general, the only way to actually have an $H_1$-free graph with $n$ multiedge triples is if the multiedge triples form an undirected 3-uniform hypergraph of $\frac{n}{4}$ disjoint $K_4^{(3)}$ blocks when $n \equiv 0 \text{ mod } 4$.

In this case there can be no additional directed edges in $H$ since such an edge would either intersect one of these $K_4^{(3)}$s in one tail vertex which would create a copy of $H_1$ since this means it intersects three of the multiedge triples in exactly one tail vertex (we may assume that each multiedge has all three edges per Lemma~\ref{more}) or it would intersect one of the $K_4^{(3)}$s in two tail vertices which means  that it intersects two of the multiedge triples in exactly one tail vertex (see Figure~\ref{blocks}).

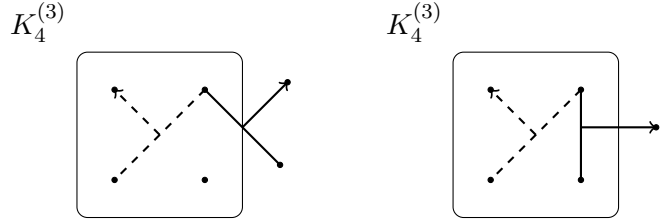
\begin{figure}
	\centering
	\begin{tikzpicture}
		\filldraw[black] (0.4,1.6) circle (1pt);
		\filldraw[black] (0.4,0.4) circle (1pt);
		\filldraw[black] (1.6,1.6) circle (1pt);
		\filldraw[black] (1.6,0.4) circle (1pt);
		\draw[rounded corners] (-0.1,-0.1) rectangle (2.1,2.1);
		\node[above left] at (-0.1,2.1) {$K_4^{(3)}$};
		
		\filldraw[black] (2.6,0.6) circle (1pt);
		\filldraw[black] (2.7,1.7) circle (1pt);
		\draw[thick] (1.6,1.6) -- (2.6,0.6);
		\draw[thick, ->] (2.1,1.1) -- (2.7,1.7);
		
		\draw[thick, dashed] (1.6,1.6) -- (0.4,0.4);
		\draw[thick, dashed,->] (1,1) -- (0.4,1.6);
		
		\filldraw[black] (5.4,1.6) circle (1pt);
		\filldraw[black] (5.4,0.4) circle (1pt);
		\filldraw[black] (6.6,1.6) circle (1pt);
		\filldraw[black] (6.6,0.4) circle (1pt);
		\draw[rounded corners] (4.9,-0.1) rectangle (7.1,2.1);
		\node[above left] at (4.9,2.1) {$K_4^{(3)}$};

		\filldraw[black] (7.6,1.1) circle (1pt);
		\draw[thick] (6.6,1.6) -- (6.6,0.4);
		\draw[thick, ->] (6.6,1.1) -- (7.6,1.1);
		
		\draw[thick, dashed] (6.6,1.6) -- (5.4,0.4);
		\draw[thick, dashed,->] (6,1) -- (5.4,1.6);
	\end{tikzpicture}
	\caption{An edge that intersects a $K_4^{(3)}$ block of multiedge triples in one or two tail vertices will create a copy of $H_1$.}
	\label{blocks}
\end{figure}

So in this case, the number of total edges would be bound by \[3n < {n+1 \choose 2} - 3\] for all $n \geq 7$.

Next, the only ways to have $n-1$ multiedge triples is to either have $\frac{n-1}{4}$ disjoint $K_4^{(3)}$ blocks when $n \equiv 1 \text{ mod } 4$ or to have $\frac{n}{4} - 1$ disjoint $K_4^{(3)}$ blocks with one $K_4^{-}$ when $n \equiv 0 \text{ mod } 4$. In the first case any additional edge must have at least one and perhaps two of its tail vertices in a single $K_4^{(3)}$ block of multiedge triples which we have already seen will create a copy of $H_1$. So there are at most \[3(n-1) < 3n < {n+1 \choose 2} - 3\] total edges in this case.

In the second case, any additional edge that has no tail vertices in a $K_4^{(3)}$ block must have both tail vertices in the $K_4^{-}$. If the head to such an edge were outside of the $K_4^{-}$, then the edge must intersect one of the three multiedge triples of the block in exactly one tail vertex since there are two triples that it intersects in one tail vertex each, one of which must be a multiedge triple. On the other hand, it could have its head vertex inside the $K_4^{-}$. In this case, the additional edge must lie on the triple without multiple edges. This is the only edge that can be added so there are at most \[3(n-1)+1 <3n <{n+1 \choose 2} - 3\] total edges in this case.

\subsection{An $H_1$-free graph with $n-2$ multiedge triples}

Now, the only ways to have exactly $n-2$ multiedge triples is either to have $\frac{n}{4} - 2$ of the $K_4^{(3)}$ blocks plus two $K_4^{-}$ blocks of multiedge triples when $n \equiv 0 \text{ mod } 4$ or to have $k$ of the $K_4^{(3)}$ blocks of multiedge triples plus a sunflower with $n-4k-2$ petals. The first case is suboptimal for the same reasons already considered. So let's consider the second case.

First, assume that $k=0$ and that we have $n-2$ multiedge triples that make a sunflower (see Figure~\ref{mexconst}). How many edges can we add? This structure already has all possible edges with 2 vertices in the core (or so we may assume by Lemma~\ref{more}). On the other hand, if an additional edge has no vertices in the core, then it would intersect two multiedge triples in one tail vertex each which would create a copy of $H_1$.

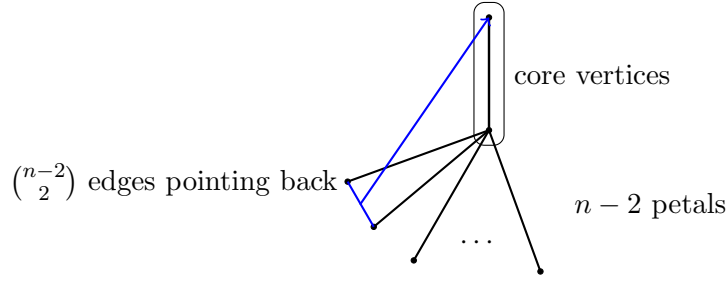
\begin{figure}
	\centering
	\begin{tikzpicture}
		\filldraw[black] (0,0) circle (1pt);
		\filldraw[black] (0,1.5) circle (1pt);
		\filldraw[black] (xyz polar cs:angle=200, radius=2) circle (1pt);
		\filldraw[black] (xyz polar cs:angle=220, radius=2) circle (1pt);
		\filldraw[black] (xyz polar cs:angle=240, radius=2) circle (1pt);
		\filldraw[black] (xyz polar cs:angle=290, radius=2) circle (1pt);
		
		\draw[thick] (0,1.5) -- (0,0) -- (xyz polar cs:angle=200, radius=2);
		\draw[thick] (0,1.5) -- (0,0) -- (xyz polar cs:angle=220, radius=2);
		\draw[thick] (0,1.5) -- (0,0) -- (xyz polar cs:angle=240, radius=2);
		\draw[thick] (0,1.5) -- (0,0) -- (xyz polar cs:angle=290, radius=2);
		
		\draw[rounded corners] (-0.2,-0.2) -- (0.2,-0.2) -- (0.2,1.7) -- (-0.2,1.7) -- cycle;
		
		\node at (xyz polar cs:angle=265, radius=1.5) {$\cdots$};
		
		\node [right] at (0.2,0.75) {core vertices};
		\node [right] at (1,-1) {$n-2$ petals};
		\node [left] at (xyz polar cs:angle=200, radius=2) {${n-2 \choose 2}$ edges pointing back};
		
		\draw[color=blue, thick] (xyz polar cs:angle=200, radius=2) -- (xyz polar cs:angle=220, radius=2);
		\draw[color=blue, thick,->] (xyz polar cs:angle=210, radius=1.98) -- (0,1.5);
		
	\end{tikzpicture}
	\caption{The unique extremal construction for an $H_1$-free graph has ${n-2 \choose 2} + 3(n-2)$ edges.}
	\label{mexconst}
\end{figure}

Therefore, any additional edge must include exactly one vertex from the core. If this vertex is in the tail set to the additional edge and the sunflower has at least three petals, then the additional edge intersects in exactly one tail vertex one of the multiedge triples of the sunflower, a contradiction. Since we assume that $n \geq 6$, then the sunflower has at least three petals. Hence, any additional edge must intersect the core in only its head vertex.

If any two additional edges have different core vertices as the head, then either the tails sets are the same or completely disjoint to avoid a copy of $H_1$. Hence, pairs of petal vertices that point to both core vertices must be independent of all other tails sets. And all other petal vertices fall into disjoint sets as to whether they are in additional edges that point to the first core vertex or the second. The number of additional edges will be maximized if every pair of petal vertices point to the same core vertex. Moreover, this will give a total of \[3(n-2) + {n-2 \choose 2} = {n+1 \choose 2} - 3\] edges.

We will soon see that this is the best that we can do and that this construction, where the multiedge triples make a sunflower with $n-2$ petals with ${n-2 \choose 2}$ additional edges pointing from pairs of petal vertices to a single core vertex, is unique up to isomorphism.

First we will need to see that $k=0$ is the number of $K_4^{(3)}$ multiedge triple blocks that optimizes the total number of edges. So suppose there are $k$ such blocks and that the other $n-4k$ vertices are in a sunflower. Then from prior considerations we know that any additional edge must have both tail vertices in this sunflower. If one of these tail vertices coincides with a petal vertex of the sunflower, then there will be a copy of $H_1$. Therefore, the tail vertices must coincide with the core and the only possibility for such an edge is to point out to a vertex in one of the $k$ blocks.

Therefore, there are at most \[3(4k) + 3(n-4k-2) + {n-4k-2 \choose 2} + 4k\] edges in such a construction. Since this expression is quadratic in $k$ with positive leading coefficient, then it must maximize at the endpoints, $k=0$ or $k=\frac{n}{4}$, and we already know that $k=\frac{n}{4}$ is suboptimal. Therefore, if there are exactly $n-2$ multiedge triples, then they must form a sunflower with a two-vertex core and from there the only way to maximize the total number of edges is to add every possible edge with tail set among the petal vertices all pointing to the same head vertex in the core.

\subsection{Fewer than $n-2$ multiedge triples}

Now suppose that $H$ has fewer than $n-2$ multiedge triples. If $t_H \leq {n-1 \choose 2}$, then \[|E(H)| \leq t_H + 2m_H < {n-1 \choose 2} + 2(n-2) = {n+1 \choose 2} - 3.\] So we must assume that $t_H > {n-1 \choose 2}$. Also, if $m_H=0$, then we know that \[|E(H)| \leq \text{ex}_o(n,H_1) = \left\lfloor \frac{n}{2} \right\rfloor (n-2) < {n+1 \choose 2} - 3.\] So assume that there is at least one multiedge triple, $\{x,y,z\}$. This triple has at least two edges. Assume without loss of generality that they are $xy \rightarrow z$ and $xz \rightarrow y$.

Let $H'$ be an oriented graph arrived at by deleting edges from multiedge triples of $H$ until each triple has at most one edge and every triple that had at least one edge in $H$ still has at least one in $H'$. In other words, $H'$ is any subgraph of $H$ such that $t_{H'} = t_{H}$ and $m_{H'}=0$. Without loss of generality, assume that \[xy \rightarrow z \in E(H').\]

Since $t_{H'} > {n-1 \choose 2}$, then $n$ must be even, and moreover, there is a matching on the vertices so that every matched pair $\{a,b\}$ points to at least two other vertices. That is, $t(a,b) \geq 2$.

Now consider the directed link graphs of the vertices. As stated before, these are either triangles or stars with a common vertex. However, if two or more of these link digraphs have three or fewer edges each (for instance, if they are triangles), then there are fewer edges than we are assuming since \[|E(H')| = \frac{1}{2} \sum_{x \in V(H')} |D_x| \leq \frac{1}{2} \left(6 + (n-3)(n-2) \right) < {n-1 \choose 2}\] for all $n \geq 8$. We will show that it must be the case that here at least two directed link graphs are restricted to at most three directed edges each, contradicting our assumptions about the number of edges in $H$.

First, note that $x \rightarrow z \in D_y$ and $y \rightarrow z \in D_x$. To avoid a contradiction, at least one of these two directed link graphs must have four or more edges. Without loss of generality, assume that it is $D_y$. Therefore, $D_y$ is a star and not a triangle. So the additional three directed edges in $D_y$ must either all be incident to $z$ or to $x$.

If these directed edges are all incident to $z$, then $y$ and $z$ must be partners under the matching which means that $x$ has another partner $x'$ distinct from $y$ and $z$. Since $t(x,x') \geq 2$ in $H'$, then $x'$ must point to two vertices in $D_x$. Since $D_x$ already has $y \rightarrow z$ and no two edges may be independent in any directed link graph, then $x'$ must point to $y$ and to $z$, forming a triangle.

Next, consider $D_{x'}$. We know that \[x \rightarrow y, x \rightarrow z \in D_{x'}.\] If there is an additional edge in $D_{x'}$ that does not complete this triangle then it is either of the form $x \rightarrow t$ or $t \rightarrow x$. If $x \rightarrow t \in D_{x'}$ then $x' \rightarrow t, y \rightarrow z \in D_x$, a contradiction. If $t \rightarrow x \in D_{x'}$, then $x' \rightarrow x \in D_t$. But since $t$ has its own matched vertex, then there exists a distinct $t'$ such that \[t' \rightarrow x, t' \rightarrow x' \in D_{t'}.\] So either $|D_{x'}| \leq 3$ or $|D_{t'}| \leq 3$. Either way, this gives us two directed link graphs that have at most three edges each. So $t_H > {n-1 \choose 2}$.

Therefore, we must assume that the three additional edges in $D_y$ are incident to $x$ and that $y$ and $x$ are partners under the matching. So $z$ has some other partner under the matching $z'$ distinct from $x$ and $y$. Now, delete the edge $xy \rightarrow z$ from $H'$ and add $xz \rightarrow y$ to get a new directed hypergraph $H''$. It follows that $H''$ has no multiedge triples and is $H_1$-free since we still have a subgraph of $H$.

In adding $xz \rightarrow y$ we have added $x \rightarrow y$ to $D_z$. Since $z'$ must point to two vertices in $D_z$, then this addition means that $D_z$ is a triangle under $H''$. Hence, $|D_z| = 2$ under $H'$.

Now, the same argument as above applies to $D_{z'}$. The only way for $|D_{z'}| > 3$ would mean either $z \rightarrow a \in D_{z'}$ or $a \rightarrow z \in D_{z'}$ for some $a$ distinct from $x$, $y$, $z$, and $z'$. The first case would mean that two independent directed edges, $z' \rightarrow a$ and $x \rightarrow y$ are in $D_z$, a contradiction. The second case would mean that $z' \rightarrow z \in D_a$. Since $a$ has its own partner under the matching that must point to two vertices in $D_a$, then in this case, $D_a$ is a triangle.

Therefore, $t_H > {n-1 \choose 2}$ and $m_H \geq 1$ cannot both be true in any $H_1$-free graph. This is enough to complete the result, \[\text{ex}(n,H_1) = {n+1 \choose 2} - 3.\] This also exhausts the remaining cases in order to demonstrate that the extremal construction is unique.

\section{Forbidden $H_2$}

In this section $H_2$ denotes the forbidden graph where two edges intersect in exactly two vertices such that the set of intersection is the tail set to each edge. That is $V(H_2) = \{a,b,c,d\}$ and $E(H_2) = \{ab \rightarrow c, ab \rightarrow d\}$ (see Figure~\ref{D}).

\begin{theorem}
For all $n \geq 5$, \[\text{ex}(n,H_2) = \text{ex}_o(n,H_2) = {n \choose 2}.\] Moreover, there are ${n \choose 2}^{n-2}$ different labeled $H_2$-free graphs attaining this extremal number when in the standard version of the problem.
\end{theorem}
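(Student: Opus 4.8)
The plan is to count edges by tail sets. Since $H_2$ is a pair of edges with a common tail set and distinct heads, an injective homomorphism $H_2 \to H$ exists if and only if some pair $\{x,y\} \subseteq V(H)$ has $t(x,y) \ge 2$; so in either model $H$ is $H_2$-free exactly when $t(x,y) \le 1$ for all pairs, and then
\[ |E(H)| = \sum_{\{x,y\} \in \binom{V(H)}{2}} t(x,y) \le \binom{n}{2}. \]
This gives $\text{ex}(n,H_2), \text{ex}_o(n,H_2) \le \binom{n}{2}$ at once, and equality forces every pair of vertices to be the tail set of exactly one edge.

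For the standard lower bound, any assignment of heads works: for a function $f : \binom{V(H)}{2} \to V(H)$ with $f(e) \notin e$, the graph with edge set $\{\, xy \to f(\{x,y\}) : \{x,y\} \in \binom{V(H)}{2}\,\}$ has $\binom{n}{2}$ edges and is $H_2$-free. By the equality analysis every extremal standard graph is of this form, so the labeled extremal graphs are in bijection with such functions; as each of the $\binom{n}{2}$ pairs has $n-2$ admissible heads, there are $(n-2)^{\binom{n}{2}}$ of them.

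The oriented case is the only part requiring work: we must choose $f$ so that, in addition, no triple $\{x,y,z\}$ carries two (equivalently three) of its edges, i.e.\ so that no triple has two of $f(\{x,y\})=z$, $f(\{x,z\})=y$, $f(\{y,z\})=x$. I would build $f$ algebraically. For odd $n$, take $V(H) = \mathbb{Z}_n$ and $f(\{i,j\}) = 2^{-1}(i+j)$; this is a valid head assignment, and a short computation shows two ``inward'' hits on a triple force it to be equilateral, $\{x, x+d, x+2d\}$ with $3d \equiv 0$. If $3 \nmid n$ no such triple exists and we are done; if $3 \mid n$ the bad triples are exactly the (pairwise disjoint) cosets of $\{0, n/3, 2n/3\}$, and each is repaired on its own by rerouting two of its three pairs to heads outside it, of which $\ge n - 5$ are available — one only checks that the rerouted edge's new triple was previously empty. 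For even $n$, set one vertex $\star$ aside, run the (patched) odd construction on the remaining $n-1$ vertices, and let $\{\star,i\}$ point to $i+1$; the only new triples $\{\star,i,j\}$ are safe since $i \mapsto i+1$ has no $2$-cycle. This realizes $\binom{n}{2}$ oriented edges for every $n \ge 5$.

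I expect the oriented construction — the equilateral-triple analysis for $3 \mid n$ and the bookkeeping that the local repairs and the even-$n$ reduction introduce no new coincidences — to be where the real work lies; the upper bound and the standard count are immediate once $H_2$-freeness is read off as $t(x,y) \le 1$.
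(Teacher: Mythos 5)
Your upper bound and your treatment of the standard version coincide with the paper's: reading $H_2$-freeness as $t(x,y)\le 1$ for every pair gives $\mathrm{ex}(n,H_2),\mathrm{ex}_o(n,H_2)\le\binom{n}{2}$ at once, and the standard extremal graphs are exactly the graphs $H_f$ determined by a head assignment $f$ with $f(e)\notin e$. One discrepancy to flag: you count these as $(n-2)^{\binom{n}{2}}$, whereas the theorem (and the paper's proof) state $\binom{n}{2}^{\,n-2}$. Your count is the correct one --- there are $\binom{n}{2}$ pairs, each with $n-2$ admissible heads --- so the figure in the statement has base and exponent transposed, and your argument proves the corrected count; this is an error in the paper, not a gap in your proposal.

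For the oriented lower bound you take a genuinely different route. The paper builds its construction inductively: an explicit $G_5$ in which every pair is a tail exactly once and every triple carries exactly one edge, then at each step a new vertex $v$ with edges $vi\to i+1$ around the previous vertex set, which adds fresh tails and creates no doubly covered triple. You instead use the midpoint map $f(\{i,j\})=2^{-1}(i+j)$ on $\mathbb{Z}_n$ for odd $n$; your analysis is sound, since two edges on one triple force $3(y-z)\equiv 0$, so the only bad triples are the cosets of $\{0,n/3,2n/3\}$ when $3\mid n$, and these are pairwise disjoint and in fact carry all three of their inward edges. The repair is even cleaner than your conservative $n-5$ bound suggests: for a pair $\{x,y\}$ inside such a coset one has $2x-y=2y-x=z$, so any head outside the triple works ($n-3$ choices), rerouted edges from distinct cosets can never share a triple because the cosets are disjoint, and the two rerouted edges within one coset cannot either; the even case via the cone vertex $\star$ with $\star i\to i+1$ is fine since $i\mapsto i+1$ has no $2$-cycles on at least five vertices. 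So your proposal is correct. The trade-off is that the paper's induction avoids any case analysis on $n\bmod 3$ (at the cost of an explicit base case $G_5$), while your algebraic construction gives a closed-form description and makes transparent exactly where and why coincidences on triples can occur.
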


\begin{figure}
	\centering
	\begin{tikzpicture}
		\filldraw[black] (0,1) circle (1pt);
		\filldraw[black] (0,-1) circle (1pt);
		\filldraw[black] (-2,0) circle (1pt);
		\filldraw[black] (2,0) circle (1pt);
		\draw[thick] (0,1) -- (0,-1);
		\draw[thick, <->] (-2,0) -- (2,0);
		
		\node[left] at (-2,0) {$c$};
		\node[right] at (2,0) {$d$};
		\node[above] at (0,1) {$a$};
		\node[below] at (0,-1) {$b$};
	\end{tikzpicture}
	\caption{$H_2$}
	\label{D}
\end{figure}

\begin{proof}
Let $H$ be $H_2$-free. Regardless of which version of the problem we are considering, each pair of vertices acts as the tail set to at most one directed edge. Therefore, \[\text{ex}(n,H_2), \text{ex}_o(n,H_2) \leq {n \choose 2}.\]

In the standard version of the problem any function, $f: {[n] \choose 2} \rightarrow [n]$, that sends each pair of vertices to a distinct third vertex, $f(\{a,b\}) \not \in \{ a,b \}$, has an associated  $H_2$-free construction $H_f$ with ${n \choose 2}$ edges. That is, for any such function, $f$, let $V(H_f) = [n]$ and \[E(H_f) = \left\{a,b \rightarrow f(\{a,b\}) : \{a,b\} \in {[n] \choose 2}\right\}.\] Since each pair of vertices acts as the tail set to exactly one directed edge, then $H_f$ is $H_2$-free and has ${n \choose 2}$ edges. So \[\text{ex}(n,H_2) = {n \choose 2}.\]

Moreover, there are ${n \choose 2}^{n-2}$ distinct functions from ${[n] \choose 2}$ to $[n]$ such that no pair is mapped to one of its members. Therefore, there are ${n \choose 2}^{n-2}$ labeled graphs that are $H_2$-free with ${n \choose 2}$ edges.\\

In the oriented version of the problem lower bound constructions can be defined inductively on $n$.

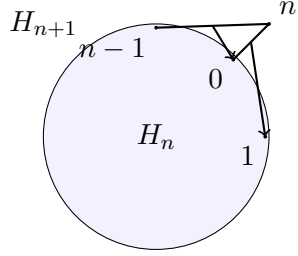
\begin{figure}
	\centering
	\begin{tikzpicture} [scale=0.5]
		\node at (-3,3) {$H_{n+1}$};
		\filldraw[color=black,fill=blue!5] (0,0) circle [radius=3];
		\node at (0,0) {$H_n$};
		
		\filldraw[black] (3,3) circle (1pt);
		\node[above right] at (3,3) {$n$};
		
		\filldraw[black] (0,2.9) circle (1pt);
		\node[below left] at (0,2.9) {$n-1$};
		\filldraw[black] (2.05,2.05) circle (1pt);
		\node[below left] at (2.05,2.05) {$0$};
		\filldraw[black] (2.9,0) circle (1pt);
		\node[below left] at (2.9,0) {$1$};
		
		\draw[thick] (3,3) -- (0,2.9);
		\draw[thick] (3,3) -- (2.05,2.05);
		
		\draw[thick,->] (1.5,2.95) -- (2.05,2.05);
		\draw[thick,->] (2.52,2.52) -- (2.9,0);
	\end{tikzpicture}
	\caption{Inductive construction of $H_2$-free oriented graphs}
	\label{D-free}
\end{figure}

First, let $n=5$ and define $G_5$ as the oriented graph with vertex set \[V(G_5) = \{0,1,2,3,4\}\]and the following edges: $0,1 \rightarrow 2$, $1,3 \rightarrow 0$, $0,4 \rightarrow 1$, $0,2 \rightarrow 3$, $2,4 \rightarrow 0$, $0,3 \rightarrow 4$, $2,3 \rightarrow 1$, $1,2 \rightarrow 4$, $1,4 \rightarrow 3$, and $3,4 \rightarrow 2$.

Each pair of vertices of $G_5$ are in exactly one tail set, and each triple of vertices appear together in exactly one edge. Therefore, this construction is $H_2$-free with ${5 \choose 2}$ edges.

Now, let $n > 5$, and define $G_n$ by $V(G_n) = \mathbb{Z}_n$ and \[E(G_n) = E(G_{n-1}) \cup \{(n-1)i \rightarrow (i+1) : i = 0,\ldots,n-2\}.\]

Then $G_n$ has $n-1$ more edges than $G_{n-1}$. So $|E(H_n)| = {n \choose 2}$.

Any two new edges intersect in at most two vertices. Similarly, any new edge and any old edge also intersect in at most two vertices. Hence, at most one edge appears on a given triple of vertices. So each $G_n$ is oriented.

Moreover, all tail sets for the new edges are distinct from each other and from any tail sets for the edges of $G_{n-1}$. So $G_n$ is $H_2$-free. Therefore, \[\text{ex}_o(n,H_2) = {n \choose 2}.\]
\end{proof}

\section{Conclusion}

The $2 \rightarrow 1$ version of directed hypergraph originally came to the author's attention as a way to model definite Horn clauses in propositional logic. Definite Horn clauses are more generally modeled by $r \rightarrow 1$ edges for any $r$. Therefore, it seems natural to ask about the extremal numbers for graphs with two $(r \rightarrow 1)$-edges. If we look at every $(r \rightarrow 1)$-graph with exactly two edges we see that these fall into four main types of graph. Let $i$ be the number of vertices that are in the tail set of both edges. Then let $I_r(i)$ denote the graph where both edges point to the same head vertex, let $H_r(i)$ denote the graph where the edges point to different head vertices neither of which are in the tail set of the other, let $R_r(i)$ denote the graph where the first edge points to a head vertex in the tail set of the second edge and the second edge points to a head not in the tail set of the first edge, and let $E_r(i)$ denote the graph where both edges point to heads in the tail sets of each other. This extends the notation used in this paper.

The degenerate cases here would be $I_r(i)$ and $H_r(i)$. It would be interesting to find the extremal numbers for these graphs in general. To what extent do the current proofs extend to these graphs? For example, in the standard version of the problem it can easily be seen that \[\text{ex}(n,I_r(0)) = n{n-2 \choose r-1}\] using the same idea as we did for $I_0$. Will the other ideas generalize as well?

\bibliography{degenerate}

\begin{thebibliography}{10}

\bibitem{angluin1992}
Dana Angluin, Michael Frazier, and Leonard Pitt.
\newblock Learning conjunctions of {H}orn clauses.
\newblock {\em Machine Learning}, 9(2-3):147--164, 1992.

\bibitem{brown1973}
WG~Brown, Paul Erd\H{o}s, and M~Simonovits.
\newblock Extremal problems for directed graphs.
\newblock {\em Journal of Combinatorial Theory, Series B}, 15(1):77--93, 1973.

\bibitem{brown1969}
WG~Brown and F~Harary.
\newblock Extremal digraphs.
\newblock {\em Coll. Math. Soc. J. Bolyai}, 4:135--198, 1969.

\bibitem{brown1984}
WG~Brown and Mikl{\'o}s Simonovits.
\newblock Digraph extremal problems, hypergraph extremal problems, and the
  densities of graph structures.
\newblock {\em Discrete Mathematics}, 48(2):147--162, 1984.

\bibitem{cameron2015}
Alex Cameron.
\newblock Extremal numbers for $2 \rightarrow 1$ directed hypergraphs with two
  edges part 1: The nondegenerate cases.
\newblock This paper is being posted to arxiv concurrently, 2016.

\bibitem{cameron2016}
Alex Cameron.
\newblock Extremal problems on generalized directed hypergraphs.
\newblock This paper is being posted to arxiv concurrently, 2016.

\bibitem{gallo1993}
Giorgio Gallo, Giustino Longo, Stefano Pallottino, and Sang Nguyen.
\newblock Directed hypergraphs and applications.
\newblock {\em Discrete applied mathematics}, 42(2):177--201, 1993.

\bibitem{keevash2011}
Peter Keevash.
\newblock Hypergraph {T}ur{\'a}n problems.
\newblock {\em Surveys in combinatorics}, 392:83--140, 2011.

\bibitem{langlois2010}
Marina Langlois.
\newblock {\em Knowledge representation and related problems}.
\newblock PhD thesis, University of Illinois at Chicago, 2010.

\bibitem{langlois2009}
Marina Langlois, Dhruv Mubayi, Robert~H Sloan, and Gy{\"o}rgy Tur{\'a}n.
\newblock Combinatorial problems for {H}orn clauses.
\newblock In {\em Graph Theory, Computational Intelligence and Thought}, pages
  54--65. Springer, 2009.

\bibitem{russell2002}
SJ~Russell and P~Norvig.
\newblock {\em Artificial Intelligence: A Modern Approach}.
\newblock Prentice Hall, 2002.

\bibitem{turan1941}
Paul Tur{\'a}n.
\newblock On an extremal problem in graph theory.
\newblock {\em Mat. Fiz. Lapok}, 48(137):436--452, 1941.

\bibitem{turan1954}
Paul Tur{\'a}n.
\newblock On the theory of graphs.
\newblock {\em Colloq. Math.}, 3:19--30, 1954.

\bibitem{turan1961}
Paul Tur{\'a}n.
\newblock Research problems.
\newblock {\em Magyar Tud. Akad. Kutat\'{o} Int. K\"{o}zl.}, 6:417--423, 1961.

\end{thebibliography}
\bibliographystyle{plain}

\end{document}